\newcommand{\C}{\mathbb{C}}
\newcommand{\R}{\mathbb{R}}
\newcommand{\Z}{\mathbb{Z}}
\newcommand{\eps}{\varepsilon}
\newcommand{\cc}{\mathrm{cc}}
\newcommand{\qc}{\mathrm{c}}
\newcommand{\bGamma}{\boldsymbol{\Gamma}}
\newcommand{\btheta}{\boldsymbol{\theta}}
\newcommand{\bT}{\boldsymbol{T}}
\newcommand{\Tab}{\mathrm{Tab}}
\newcommand{\STab}{\mathrm{STab}}
\newcommand{\ASTab}{\mathrm{ASTab}}
\newcommand{\cB}{\mathcal{B}}
\newcommand{\sv}{\mathsf{v}}
\newcommand{\bze}{\boldsymbol{0}}
\newcommand{\Seq}{\mathrm{Seq}}
\newcommand{\bS}{\boldsymbol{S}}
\newcommand{\oO}{\mathcal{O}}
\newcommand{\un}[1]{\underline{#1}}
\newcommand{\unn}[1]{\underline{\underline{#1}}}
\newcommand{\oH}{\overline{H}}
\newcommand{\cH}{\mathcal{H}}
\newcommand{\cP}{\mathcal{P}}
\newcommand{\ovr}{\overline{r}}
\newcommand{\Ve}{\mathrm{Vert}}
\newcommand{\pl}{\mathrm{pl}}
\newtheorem{theorem}{\textbf{Theorem}}[section]
\newtheorem{corollary}[theorem]{\textbf{Corollary}}
\newtheorem{lemma}[theorem]{\textbf{Lemma}}
\newtheorem{proposition}[theorem]{\textbf{Proposition}}
\newtheorem*{theo-intro}{\textbf{Theorem}}
\theoremstyle{definition}
\newtheorem{definition}[theorem]{\textbf{Definition}}
\theoremstyle{remark}
\newtheorem{remark}[theorem]{\textbf{Remark}}
\newtheorem{remarks}[theorem]{\textbf{Remarks}}
\newtheorem{example}[theorem]{\textbf{Example}}
\newcounter{pic}\setcounter{pic}{0}
\newenvironment{pic}[1][\bf Fig. \arabic{pic}]{
        
        \refstepcounter{pic}\noindent\textbf{#1.}${}$\hspace{5pt}${}$\it}{}
\numberwithin{equation}{section}
\begin{document} 

\title[Hecke algebras and Young tableaux]{Young tableaux and representations of Hecke algebras of type ADE}

\author{L. Poulain d'Andecy}
\email{loic.poulain-dandecy@univ-reims.fr}
\address{Universit\'e de Reims Champagne-Ardenne, UFR Sciences exactes et naturelles, Laboratoire de Math\'ematiques EA 4535 Moulin de la Housse BP 1039, 51100 Reims, France}

\classification{20C08, 20C30, 05E10, 20F55}

\keywords{Hecke algebras; Weyl groups; simply-laced root systems; affine Hecke algebras; Jucys--Murphy elements; skew partitions; Young tableaux; seminormal representations; calibrated representations}

\date{June, 2014}

\thanks{Part of this research was supported by ERC-advanced grant no.~268105 when the author was in Korteweeg-de Vries Institute for Mathematics in Amsterdam. It is a pleasure to thank Marcelo Gon\c{c}alves de Martino and Eric Opdam for interesting discussions.}

\begin{abstract} 
We introduce and study some affine Hecke algebras of type ADE, generalising the affine Hecke algebras of GL. We construct irreducible calibrated representations and describe the calibrated spectrum. This is done in terms of new families of combinatorial objects equipped with actions of the corresponding Weyl groups. These objects are built from and generalise the usual standard Young tableaux, and are controlled by the considered affine Hecke algebras. By restriction and limiting procedure, we obtain several combinatorial models for representations of finite Hecke algebras and Weyl groups of type ADE. Representations are constructed by explicit formulas, in a seminormal form.
\end{abstract}

\maketitle

\section{Introduction}\label{sec-intro}

\paragraph{\textbf{1.}} 
The classical approach to the representation theory of the symmetric group involves the combinatorial notion of partitions and associated standard tableaux. A classical explicit realisation of the irreducible representations is often referred to as the ``seminormal form" and was already given by A. Young \cite{Yo} (see also \cite{Rut}). In this realisation, a distinguished (``seminormal") basis of the irreducible representation spaces is indexed by standard tableaux and is such that a certain set of commuting elements, the Jucys--Murphy elements, acts as diagonal matrices \cite{Mu}.

Analogues of the seminormal representations of the symmetric group have been given explicitly, using combinatorial constructions, in many other situations. We refer to \cite{AK,AMR,Ch1,Ch2,CP,CP2,En,Ho,HM,IO,IO2,Na1,Na2,OP,OP2,Pu,Ra,SV,Wa}. In all these situations, analogues of Jucys--Murphy elements are present and act semisimply in the representations.
The combinatorial constructions originate in an identification of the sequences of eigenvalues of these Jucys--Murphy elements with combinatorial objects (in the spirit of the work of A. Okounkov and A. Vershik for the symmetric group \cite{OV}). We note that all these situations have a type A flavour and that the combinatorics is related to the classical one (one considers for example strict tableaux, multi-tableaux or up-down tableaux). In particular, the group naturally acting on the set of combinatorial objects is the symmetric group.

The symmetric group is the Weyl group of the irreducible root system of type A. In this paper, we will consider irreducible root systems of simply-laced types. The simply-laced types are the types A, D and E, and we will call ``of type ADE" any object associated to such a root system  (for examples: Weyl group, finite Hecke algebra, affine Hecke algebra, ...). 

\paragraph{\textbf{2.}} In most of the situations quoted above, an ``affine" version of the algebras under consideration controls the eigenvalues of these Jucys--Murphy elements, and in turn the combinatorics involved. The ``affine" algebras controlling the Jucys--Murphy elements of the symmetric group and of the finite Hecke algebra of type A are actually particular examples of affine Hecke algebras (degenerate, or graded, for the symmetric group). They are affine Hecke algebras associated to $GL(n)$. In general, affine Hecke algebras are associated to root data coming from algebraic reductive groups, and a well-known motivation is that their representation theory forms a part of the representation theory of reductive groups defined over $p$-adic fields.

An affine Hecke algebra comes equipped with a distinguished commutative subalgebra. In the case of $GL(n)$, the Jucys--Murphy elements of the symmetric group and of the finite Hecke algebra of type A are obtained as images (in a well-chosen quotient) of well-chosen generators of this commutative subalgebra. This is the reason for the fact that representations of affine Hecke algebras of $GL(n)$ appear in the study of these Jucys--Murphy elements. In fact, in this setting, only a part of the representation theory of affine Hecke algebras is used. Namely, the representations involved are the ones in which the elements of the distinguished commutative subalgebra act semisimply. Such representations are called \emph{calibrated} (sometimes also completely splittable \cite{Kl,Ruf}, or in another context, homogeneous \cite{KR}).

For the affine Hecke algebras of GL, the irreducible calibrated representations are classified and described combinatorially in terms of skew partitions \cite{Ch1,Ch2,Ra} (see also \cite{Ruf}). In the degenerate case, by restriction to the symmetric group, one recovers the so-called ``skew" representations. They were introduced and studied for example in \cite{Ro,JP}, originally in connections with the Littlewood--Richardson rule. In the non-degenerate case, one obtains by restriction the analogues of skew representations for the finite Hecke algebra of type A, whose decompositions into irreducibles also involve Littlewood--Richardson coefficients \cite{Ra}.

Calibrated representations were classified in \cite{Ra2} for affine Hecke algebras associated to the weight lattice of a root system of arbitrary type. Here, we will follow a different line of generalisation by considering different affine Hecke algebras. As far as affine Hecke algebras are concerned, we generalise the construction for the affine Hecke algebra of GL in terms of skew partitions. Besides, our point of view on the affine Hecke algebras considered here is also that, in the spirit explained above, they are algebraic structures controlling some sort of combinatorics, that we apply to representations, in this case, of finite Hecke algebras and Weyl groups of type ADE.

\paragraph{\textbf{3.}} One goal of this paper is to provide a whole class of combinatorial models for representations of finite Hecke algebras and Weyl groups of type ADE.  We will mostly work at the level of some affine Hecke algebras of type ADE and actually construct irreducible (calibrated) representations of these affine Hecke algebras. Representations of finite Hecke algebras are then obtained simply by restriction, and representations of Weyl groups are obtained by a limiting procedure.

Main ingredients in the construction are new sets of combinatorial objects equipped with actions of the corresponding Weyl groups. These combinatorial objects are built from and generalise the usual Young tableaux associated to (skew) partitions. Representations are constructed by explicit formulas, generalising the seminormal representations of the symmetric group associated to (skew) partitions.

Another goal of this paper is to introduce and study some affine Hecke algebras (of type ADE). 
The combinatorial constructions evoked above are controlled by those affine Hecke algebras. We emphasize their fundamental role by introducing them as the starting point of our study. They generalise in a natural sense the affine Hecke algebra of GL. As for GL, they are associated to reductive groups (of type ADE) which are not semisimple. However, as an independent motivation to consider them, they admit certain quotients by a distinguished central element which are isomorphic to affine Hecke algebras associated to semisimple groups (the most studied cases). This will be more detailed below.

For each type (A, D or E), our construction depends on a chosen vertex of the corresponding Dynkin diagram. The classical construction for the symmetric group with the usual partitions then corresponds to type A with the chosen vertex being an extremity of the Dynkin diagram. So, already for the symmetric group, we obtain new combinatorial constructions: those associated to a vertex which is not one of the two extremities of the Dynkin diagram.

\paragraph{\textbf{4.}} We will now describe in more details the main objects and the organisation of the paper. We first introduce some affine Hecke algebra of type ADE (Section \ref{sec-def}). Given an irreducible root system $R$ of type ADE, we take an arbitrary vertex $\sv$ of the Dynkin diagram of $R$, and construct an affine Hecke algebra $H(R,\sv)$. The algebra $H(R,\sv)$ is associated to a free $\Z$-module, denote $L_{R,\sv}$, obtained by extending the root lattice of $R$ by one dimension; the action of the Weyl group on it depends on the choice of $\sv$.

An important step is the choice of a basis of $L_{R,\sv}$. This will be the key to the combinatorial approach, as explained later. We also use this basis to work out explicitly a finite presentation by generators and relations of the algebra $H(R,\sv)$. This presentation helps to see that the algebra $H(R,\sv)$ contains several (up to 3) distinguished subalgebras isomorphic to affine Hecke algebras of GL, and further how they are ``glued" together. It reflects a certain covering, depending on $\sv$, of the Dynkin diagram of $R$ by several (up to 3) subdiagrams of type A.

It turns out that the free $\Z$-module $L_{R,\sv}$ contains an distinguished element, denoted $\Delta_{R,\sv}$, invariant under the action of the Weyl group. Furthermore, the quotient module $L_{R,\sv}/\Z \Delta_{R,\sv}$ turns out to coincide with the following one: the root lattice of $R$ extended by adding the fundamental weight corresponding to the vertex $\sv$. This $\Z$-module, included between the root lattice and the weight lattice, automatically gives rise to an affine Hecke algebra, that we denote $\oH(R,\sv)$, which now corresponds to a semisimple group, unlike $H(R,\sv)$. It follows from our construction that this algebra $\oH(R,\sv)$ is isomorphic to the quotient of $H(R,\sv)$ over a relation $C_{R,\sv}=1$, where $C_{R,\sv}$ is a central element of $H(R,\sv)$. Therefore, we will obtain informations on the representation theory of $\oH(R,\sv)$ immediately from our results on $H(R,\sv)$. This could be an independent motivation for considering the algebras $H(R,\sv)$.

As $\sv$ varies among the set of vertices of the Dynkin diagram of $R$, we obtain different $\Z$-modules included between the root lattice of $R$ and the weight lattice of $R$. We note that all such $\Z$-modules (for types ADE) are obtained this way with the following exceptions: the root lattice of type $A_n$ and the weight lattice of type $D_n$ with $n$ even. To sum up, the algebras $\oH(R,\sv)$, when $R$ and $\sv$ vary, exhaust the set of affine Hecke algebra of type ADE associated with semisimple root data, with these two exceptions.

\paragraph{\textbf{5.}} A crucial notion in our study is that of an action and a truncated action of the Weyl group on sequences of numbers (Section \ref{sec-act}). These sequences are identified with characters of the commutative subalgebra of $H(R,\sv)$, and this is done using the basis of $L_{R,\sv}$ chosen earlier. One goal will be to determine the characters appearing in calibrated representations (\emph{i.e.} the calibrated spectrum) and to interpret them as combinatorial objects. The sequences form the intermediary step between characters and combinatorial objects. The action of the Weyl group of $R$ is explicitly calculated on the chosen basis and is then translated as an action of the Weyl group on sequences (and later on tableaux). This action generalises the permutation action of the symmetric group. 

Roughly speaking, the truncated action on a given sequence consists in ``killing" the action of some simple reflections. It is not an action of the Weyl group anymore, however the important property is that the braid relations are still satisfied. This allows us to define a notion of orbits under this truncated action. Some of these truncated orbits will parametrise the representations constructed later.

The new combinatorial objects, the tableaux of type $(R,\sv)$, are defined in Section \ref{sec-tab}. The action of the Weyl group is described explicitly, in a combinatorial way.  A new feature of the action of the Weyl group, compared to the usual action of the symmetric group on tableaux, is that not only boxes are exchanged, but some of them sometimes also move to a different position. This results in the possibility that, starting from a tableau of type $(R,\sv)$, one obtains, after applying an element of the Weyl group, something which is not a tableau of type $(R,\sv)$. This leads us to the notion of admissibility for a standard tableau of type $(R,\sv)$, the precise definition being that its truncated orbit must contain only standard tableaux of type $(R,\sv)$. Examples are given for several choices of $R$ and $\sv$.

Representations of $H(R,\sv)$ are constructed in Section \ref{sec-rep} with the help of standard tableaux of type $(R,\sv)$. The representations are parametrised by admissible truncated orbits and the representation spaces have a basis indexed by the elements of a given admissible truncated orbits (by admissibility, these are standard tableaux of type $(R,\sv)$). The action of the generators is given explicitly (in a ``seminormal" form) and the representations are irreducible and calibrated.

 As a corollary of the construction of representations associated to admissible truncated orbits, we obtain a complete description of the calibrated spectrum of $H(R,\sv)$. Roughly speaking, the calibrated spectrum is the set of characters of the commutative subalgebra of $H(R,\sv)$ which support a calibrated representation. The result is that the calibrated spectrum of type $(R,\sv)$ is in bijection with the set of admissible standard tableaux of type $(R,\sv)$. In a sense, this sums up the interpretation of these combinatorial objects in terms of representation theory.

It is immediate to determine which representations pass to the quotient and become irreducible calibrated representations of $\oH(R,\sv)$. This property is seen directly on the standard tableaux. A description of the calibrated spectrum of the quotient algebra $\oH(R,\sv)$ also follows immediately from its description as a quotient of $H(R,\sv)$ by a central element, together with general considerations on the calibrated spectrum of a quotient affine Hecke algebra. It is described as a subset of the set of admissible standard tableaux of type $(R,\sv)$, namely those satisfying a condition on their contents given explicitly, and related to the expression of the fundamental weight corresponding to $\sv$ in terms of simple roots.

\paragraph{\textbf{6.}} Let us make connections with the classical situation of $GL(n)$. Among the algebras $H(R,\sv)$, the affine Hecke algebra of $GL(n)$ appears when one takes $R=A_{n-1}$ and $\sv$ an extremity of the Dynkin diagram. In this case, the $\Z$-module $L_{R,\sv}$ is the usual free $\Z$-module with basis $\epsilon_1,\dots,\epsilon_n$ equipped with the permutation action of the symmetric group (the simple roots are $\epsilon_{i+1}-\epsilon_i$ and $\epsilon_1$ is the basis vector added to the root lattice). The distinguished
invariant is $\epsilon_1+\dots+\epsilon_n$ and the quotient algebra $\oH(R,\sv)$ is the affine Hecke algebra associated to the weight lattice (that is, associated to $SL(n)$).

For $GL(n)$, our notion of tableaux of type $(R,\sv)$ coincides evidently with the usual notion of Young tableaux associated to skew shapes. The truncated action in this case is defined as follows. Consider the simple transposition exchanging $i$ and $i+1$. Then its action on a given standard tableau is ``killed" if $i$ and $i+1$ are adjacent. A fact underlying the representation theory of the symmetric group is that this process respects the braid relations, a result that we generalise to any type $(R,\sv)$. We check that, with our definitions, a truncated orbit for $GL(n)$ contains all standard tableaux of a given skew shape. Moreover, here any standard tableau is obviously admissible. To sum up, our construction recovers, for $GL(n)$, the constructions of \cite{Ch1,Ch2,Ra}. In this situation, the general result concerning the quotients $\oH(R,\sv)$ becomes that the representations passing to the affine Hecke algebra of $SL(n)$ are those corresponding to shapes such that the product of the contents of all boxes is equal to 1.

From the point of view of representation theory, the notion of being of the same shape for two standard tableaux is replaced, for a general type $(R,\sv)$, by the notion of belonging to the same truncated orbit. In this sense, the (admissible) truncated orbits replace the usual skew shapes. We note that the notion of shape is not relevant anymore since the boxes can move under the action of the Weyl group. This emphasizes the importance of the truncated action which was used to define the truncated orbits, and whose properties ultimately rely on the fact that the braid relations are preserved. Furthermore, the truncated action is necessary to define the notion of admissibility which could not have been defined using the full action of the Weyl group. We note that the notion of admissibility was not present (or, more precisely, is trivial) in the classical $GL(n)$ situation.
 
\paragraph{\textbf{7.}} Finally, in a last part (Section \ref{sec-res-lim}), the restriction to finite Hecke algebras (of type ADE) is considered. The representations, indexed by admissible truncated orbits, are already constructed. We also obtain representations of Weyl groups (of type ADE) by a limiting procedure in the formulas for the action of the generators. We consider the obtained representations as analogues of the skew representations of the symmetric group and of the finite Hecke algebra of type A. We note that, corresponding to a given type $R$, we obtain several different families of combinatorial constructions (depending of the vertex $\sv$).

The obtained representations of finite Hecke algebras and Weyl groups are not irreducible in general. Nevertheless, we obtain a sufficient condition for these representations to be irreducible, which is expressed combinatorially in terms of tableaux. We name this property as ``to be of level 1" since it corresponds to considering representations which pass to a certain ``cyclotomic" quotient of level 1. The term cyclotomic for these quotients, and the ones of higher levels, is motivated by the remark below.

Again to clarify the connections with the $GL(n)$ situation, we note that the cyclotomic quotients (of any level) exactly coincide, in the $GL(n)$ situation, with the well-studied Ariki--Koike algebras \cite{AK}, also called cyclotomic Hecke algebras. Then the property of being of level 1 for a classical tableau is simply that its shape must be a usual partition (not skew). It is well-known that in this case, the corresponding representations of the finite Hecke algebra of type A (or of the symmetric group) are irreducible. This is the property we generalise to any type $(R,\sv)$. We note that a further property in the $GL(n)$ situation is that all irreducible representations of the finite Hecke algebra are obtained as such level 1 representations, or equivalently, the cyclotomic quotients of level 1 are isomorphic to the finite Hecke algebra. This is not true anymore for a general type $(R,\sv)$ and the understanding of the algebraic structure of the cyclotomic quotients of any level (analogues of the Ariki--Koike algebras) would require a further study.

\vskip 0.5cm
\setcounter{tocdepth}{2}
\tableofcontents

\section{Preliminaries on root data and affine Hecke algebras}\label{sec-prel}

\subsection{Root data}\label{subsec-prel1}

Let $R$ be a finite reduced root system and fix $\{\alpha_i\}_{i=1,\dots,n}$ a set of simple roots for $R$. We denote by $Q_R$ the root lattice:
\[Q_R:=\bigoplus_{i=1}^n\Z\alpha_i\ .\]
We denote by $R^{\vee}$ the dual root system of coroots of $R$ and we identify coroots in $R^{\vee}$ with elements of $\text{Hom}_{\Z}(Q_R,\Z)$ via the natural pairing between roots and coroots.

Let $T$ be a finitely-generated free $\Z$-module with a given inclusion $Q_R\subseteq T$ of $\Z$-modules. Assume moreover that we have elements $\alpha^{\vee}\in\text{Hom}_{\Z}(T,\Z)$, for $\alpha\in R$, such that the set of maps $\{\alpha^{\vee}\}_{\alpha\in R}\,$, restricted to $Q_R$, forms the dual root system $R^{\vee}$. The data $Q_R\subseteq T$ and $R^{\vee}\subset \text{Hom}_{\Z}(T,\Z)$ form a root datum.

By extension of scalars, the coroots $\alpha^{\vee}$ are $\R$-linear forms on $\R\otimes_{\Z}T$. To each root $\alpha\in R$ is associated the reflection $r_{\alpha}$ in $\R\otimes_{\Z} T$ given by
\begin{equation}\label{act-W0}
r_{\alpha}(x):=x-\alpha^{\vee}(x)\,\alpha\ \ \ \ \ \ \text{for any $x\in \R\otimes_{\Z} T$.}
\end{equation}
The group generated by $r_{\alpha}$, $\alpha\in R$, is identified with the Weyl group $W_0(R)$ associated to $R$. Let $r_1,\dots,r_n$ be the reflections in $W_0(R)$ corresponding to the simple roots $\alpha_1,\dots,\alpha_n$ and let $m_{ij}$ be the order of the element $r_ir_j$ in $W_0(R)$ (we have $m_{ii}=1$). The group $W_0(R)$ is generated by $s_1,\dots,s_n$ and a set of defining relations is
\[(r_ir_j)^{m_{ij}}=1\ \ \ \ \ \text{for $i,j=1,\dots,n$\,.}\]
The Weyl group $W_0(R)$ acts on $T$ via Formula (\ref{act-W0}) since $\alpha^{\vee}(x)\in\Z$ if $x\in T$ and $Q_R\subseteq T$.

\begin{example}\label{ex-prel}
The free $\Z$-module $P_R:=\text{Hom}_{\Z}(Q_{R^{\vee}},\Z)$ is called the weight lattice and is naturally identified with a subset of $\R\otimes_{\Z}Q_R$. Namely, the weight lattice $P_R$ consists of elements $\omega\in\R\otimes_{\Z}Q_R$ such that $\alpha^{\vee}(\omega)\in\Z$ for any $\alpha\in R$. The fundamental weights $\omega_1,\dots,\omega_n$ associated to the set of simple roots $\{\alpha_i\}_{i=1,\dots,n}$ are defined by $\alpha_i^{\vee}(\omega_j)=\delta_{i,j}$, $i,j=1,\dots,n$. We then have
\[P_R=\bigoplus_{i=1}^n\Z\omega_i\ .\]
We have obviously $Q_R\subseteq P_R$. Let $L$ be a $\Z$-module such that $Q_R\subseteq L\subseteq P_R$. By definition of $P_R$, the restrictions of the maps $\alpha^{\vee}$ to $L$ belong to $\text{Hom}_{\Z}(L,\Z)$. So finally, for such a $\Z$-module $L$, the natural inclusions $Q_R\subseteq L$ and $R^{\vee}\subset \text{Hom}_{\Z}(L,\Z)$ form a root datum. \hfill$\triangle$
\end{example}

\subsection{Affine Hecke algebras}\label{subsec-prel2}

As we will only consider irreducible simply-laced types, we only give the definition of one-parameter (affine) Hecke algebras.

\paragraph{\textbf{Finite Hecke algebras.}} Let $q$ be an indeterminate. The finite Hecke algebra $H_0(R)$ is the associative $\C [q,q^{-1}]$-algebra with unit generated by elements $g_1,\dots,g_n$ with the defining relations:
\begin{equation}\label{rel-H0}
\begin{array}{ll}
g_i^2=(q-q^{-1})g_i+1 & \text{for}\ i\in\{1\dots,n\}\,,\\[0.2em]
\underbrace{g_ig_jg_i\dots}_{m_{ij}\,terms}=\underbrace{g_jg_ig_j\dots}_{m_{ij}\,terms}\ \ \  & \text{for $i,j\in\{1,\dots,n\}$ with $i\neq j$}\,.
\end{array}
\end{equation} 

For any element $w\in W_0(R)$, let $w=r_{a_1}\dots r_{a_k}$ be a reduced expression for $w$ in terms of the generators $r_1,\dots,r_n$. We define $g_w:=g_{a_1}\dots g_{a_k}\in H_0(R)$. This definition does not depend on the reduced expresion for $w$ and and it is a standard fact that the set of elements
\begin{equation}\label{base-H0}
\{g_w\,,\ \ \ w\in W_0(R)\}
\end{equation} 
forms a $\C [q,q^{-1}]$-basis for $H_0(R)$ (\emph{e.g.} \cite{GP}). The specialization of the algebra $H_0(R)$ at $q=\pm1$ is the group algebra $\C W_0(R)$.

\paragraph{\textbf{Affine Hecke algebras.}}

 Let $\bigl(Q_R\subseteq T,\  R^{\vee}\!\subset\text{Hom}_{\Z}(T,\Z)\bigr)$ be a root datum as before. 
The affine Hecke algebra, denoted by $H(R,T)$, associated to the root datum is the associative $\C [q,q^{-1}]$-algebra with unit generated by elements 
$$g_1,\dots,g_{n}\ \ \ \ \text{and}\ \ \ \ X^x,\ \ x\in T\ ,$$
subject to the defining relations $X^0=1$, $X^xX^{x'}=X^{x+x'}$ for any $x,x'\in T$, and Relations (\ref{rel-H0}) together with
\begin{equation}\label{rel-Lu}
g_iX^x-X^{r_i(x)}g_i=(q-q^{-1})\frac{X^x-X^{r_i(x)}}{1-X^{-\alpha_i}}\ ,
\end{equation}
for any $x\in T$ and $i=1,\dots,n$. It follows from (\ref{act-W0}) that the fraction in the right hand side above is a polynomial in the (commuting) generators $X^{x'}$, $x'\in T$. Explicitly, we have 
\[\frac{X^x-X^{r_i(x)}}{1-X^{-\alpha_i}}=\left\{\begin{array}{ll}0 & \text{if $\alpha_i^{\vee}(x)=0$\,,}\\
X^x\displaystyle\sum_{k=0}^{\alpha_i^{\vee}(x)-1}X^{-k\alpha_i} & \text{if $\alpha_i^{\vee}(x)\in\Z_{>0}$\,,}\\
-X^{r_i(x)}\displaystyle\sum_{k=0}^{-\alpha_i^{\vee}(x)-1}X^{-k\alpha_i} & \text{if $\alpha_i^{\vee}(x)\in\Z_{<0}$\,.}
\end{array}\right. \]
We record here the special cases of Formula \ref{rel-Lu} that will be sufficient for the sequel:
\begin{equation}\label{rel-Lu2}
g_iX^x=X^{x}g_i\ \ \ \text{if $\alpha_i^{\vee}(x)=0$}\ \ \ \ \ \ \text{and}\ \ \ \ \ \ g_iX^xg_i=X^{x+\alpha_i}\ \ \ \text{if $\alpha_i^{\vee}(x)=-1$\,,}
\end{equation}
where we used that $g_i^{-1}=g_i-(q-q^{-1})$.

The following set of elements is a $\C [q,q^{-1}]$-basis of $H(R,T)$ (\emph{e.g.} \cite{Lu}):
\begin{equation}\label{base-aff}
\{X^xg_w\ ,\ \ \ \text{$x\in T$ and $w\in W_0(R)$}\}\,.
\end{equation}
It follows in particular that the (multiplicative) group formed by elements $X^x$, $x\in T$, of $H(R,T)$ can be identified with the (additive) group $T$. It follows also that the subalgebra generated by $g_1,\dots,g_n$ is isomorphic to the finite Hecke algebra $H_0(R)$.

\begin{remark}\label{rem-fin-pres}
An affine Hecke algebra $H(R,T)$ is finitely generated and finitely presented. Indeed, let $\delta_1,\dots,\delta_k\in T$ be elements spanning $T$ over $\Z$. Then, it is obvious that $H(R,T)$ is generated by $g_1,...,g_n$ together with $X^{\pm\delta_1},\dots,X^{\pm\delta_k}$. Moreover, it is straightforward to check that, for $i=1,\dots,n$, the set of relations (\ref{rel-Lu}), where $x\in T$, is implied by the set of relations (\ref{rel-Lu}), where $x\in\{\delta_1,\dots,\delta_k\}$. \hfill $\triangle$
\end{remark}

\paragraph{\textbf{Parabolic root subdata.}} Let $\mathcal{P}$ be a parabolic root subsystem of $R$ corresponding to a subset $S_{\mathcal{P}}\subseteq\{1,\dots,n\}$. Let $T_{\mathcal{P}}\subseteq T$ be a free $\Z$-submodule of $T$ containing the simple roots $\alpha_i$ for $i\in S_{\mathcal{P}}$. Then $T_{\mathcal{P}}$ contains the whole root lattice of $\mathcal{P}$ and the restrictions to $T_{\mathcal{P}}$ of the maps $\alpha^{\vee}$, $\alpha\in\mathcal{P}$, give an inclusion of the dual root system $\mathcal{P}^{\vee}$ in $\text{Hom}_{\Z}(T_{\mathcal{P}},\Z)$. Thus 
$$\bigl(Q_{\mathcal{P}}\subseteq T_{\mathcal{P}},\ \mathcal{P}^{\vee}\!\subset\text{Hom}_{\Z}(T_{\mathcal{P}},\Z)\bigr)$$
is a root datum to which an affine Hecke algebra $H(\mathcal{P},T_{\mathcal{P}})$ is associated. It follows at once from the definitions that the subalgebra of $H(R,T)$ generated by $g_i$, $i\in S_{\mathcal{P}}$, and $X^x$, $x\in T_{\mathcal{P}}$, is a quotient of $H(\mathcal{P},T_{\mathcal{P}})$. From the knowledge of the basis (\ref{base-aff}), we have that this subalgebra is actually isomorphic to $H(\mathcal{P},T_{\mathcal{P}})$.

\paragraph{\textbf{Quotient root data.}} Let $I$ be a $\Z$-submodule of $T$ such that $I\subseteq \text{ker}(\alpha^{\vee})$ for any $\alpha\in R$; in other words, any element of $I$ is fixed by the Weyl group $W_0(R)$. In particular $Q_R\cap I=\{0\}$. We note $[x]$ the class of an element $x\in T$ in the quotient $T/I$.

Then $Q_R\ni\alpha\mapsto [\alpha]\in T/I$ gives an inclusion $Q_R\subseteq T/I$. Further, by assumption, the maps $\alpha^{\vee}$, $\alpha\in R$, factor through $T/I$ and yield an inclusion $R^{\vee}\subset\text{Hom}_{\Z}(T/I,\Z)$. Assume moreover that $T/I$ is a free $\Z$-module. Thus 
\begin{equation}\label{quot-RD}
\bigl(Q_R\subseteq T/I,\ R^{\vee}\!\subset\text{Hom}_{\Z}(T/I,\Z)\bigr)
\end{equation}
is a root datum to which an affine Hecke algebra $H(R,T/I)$ is associated. It follows from the definitions that $H(R,T/I)$ is a quotient of $H(R,T)$, the surjective morphism being given on the generators by $H(R,T)\ni g_i\mapsto g_i\in H(R,T/I)$ and $H(R,T)\ni X^x\mapsto X^{[x]}\in H(R,T/I)$. Again, from the knowledge of 
the basis (\ref{base-aff}), it follows that $H(R,T/I)$ is isomorphic to the quotient of $H(R,T)$ over the relations $X^x=1$ for $x\in I$. Note moreover that the elements $X^x$, $x\in I$, are central in $H(R,T)$.

\subsection{Calibrated spectrum of affine Hecke algebras}\label{subsec-prel3}

In all the paper, we will consider only finite-dimensional representations of affine Hecke algebras.

Let $\bigl(Q_R\subseteq T,\  R^{\vee}\!\subset\text{Hom}_{\Z}(T,\Z)\bigr)$ be a root datum and $H(R,T)$ the associated affine Hecke algebra. We denote $\C(q)H(R,T):=\C(q)\otimes_{\C[q,q^{-1}]}H(R,T)$.
\begin{definition}
A representation $V$ of the affine Hecke algebra $\C(q)H(R,T)$ is called \emph{calibrated} if the elements $X^x$, $x\in T$, act as diagonalizable endomorphisms of $V$.
\end{definition}

Recall that the (additive) Abelian group $T$ is identified with the (multiplicative) Abelian group formed by elements $X^x$, $x\in T$. The set $\text{Hom}(T,\C(q)^{\times})$ is the set of (multiplicative) group homomorphisms from $T$ to $\C(q)^{\times}$ (\emph{i.e.} the set of $\C(q)$-characters of $T$). 

\begin{definition}\label{def-C-spec}
We denote by $\text{C-Spec}\bigl(H(R,T)\bigr)$ the set of elements $\chi\in\text{Hom}(T,\C(q)^{\times})$ such that there exist a calibrated representation $V$ of $\C(q)H(R,T)$ and a non-zero vector $v\in V$ satisfying
\[X^{x}(v)=\chi(X^x)\,v\ \ \ \ \text{for any $x\in T$\,.}\]
We call the set $\text{C-Spec}\bigl(H(R,T)\bigr)$ the \emph{calibrated spectrum} of the affine Hecke algebra $H(R,T)$.
\end{definition}
In other words, the calibrated spectrum $\text{C-Spec}\bigl(H(R,T)\bigr)$ is the subset of $\text{Hom}(T,\C(q)^{\times})$ consisting of all the characters of $T$ appearing when we decompose the calibrated representations of $\C(q)H(R,T)$ as direct sums of characters of $T$ (note that by definition a calibrated representation of $\C(q)H(R,T)$ is semisimple as a representation of $T$).

We consider a quotient root datum as in (\ref{quot-RD}). For later use, we formulate below the general result relating the calibrated spectrum of the quotient affine Hecke algebra $H(R,T/I)$ with the calibrated spectrum of $H(R,T)$.
\begin{proposition}\label{prop-c-spec}
We have:
\[\text{C-Spec}\bigl(H(R,T/I)\bigr)=\{\chi\in\text{C-Spec}\bigl(H(R,T)\bigr)\ \text{such that}\ \chi(X^x)=1\ \text{for}\ x\in I\}\ .\]
\end{proposition}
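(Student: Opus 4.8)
The plan is to prove the two inclusions separately, using throughout the description of $H(R,T/I)$ as the quotient of $H(R,T)$ by the two-sided ideal generated by the central elements $X^x-1$, $x\in I$, established in the paragraph on quotient root data.

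First I would prove the inclusion $\subseteq$. Let $\chi\in\text{C-Spec}\bigl(H(R,T/I)\bigr)$, so there is a calibrated representation $\overline{V}$ of $\C(q)H(R,T/I)$ and a non-zero $v\in\overline{V}$ with $X^{[x]}(v)=\chi(X^{[x]})\,v$ for all $x\in T$. Pulling back along the surjection $\C(q)H(R,T)\twoheadrightarrow\C(q)H(R,T/I)$, the space $\overline{V}$ becomes a representation $V$ of $\C(q)H(R,T)$ on which $X^x$ acts as $X^{[x]}$ does; in particular each $X^x$ remains diagonalizable, so $V$ is calibrated, and the character $\widetilde{\chi}\in\text{Hom}(T,\C(q)^\times)$ defined by $\widetilde{\chi}(X^x):=\chi(X^{[x]})$ is realized on the non-zero vector $v$. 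Thus $\widetilde{\chi}\in\text{C-Spec}\bigl(H(R,T)\bigr)$, and since $[x]=0$ for $x\in I$ we have $\widetilde{\chi}(X^x)=\chi(X^0)=1$ for $x\in I$. Identifying $\widetilde{\chi}$ with $\chi$ via the inclusion $\text{Hom}(T/I,\C(q)^\times)\hookrightarrow\text{Hom}(T,\C(q)^\times)$ gives the claimed inclusion.

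Next I would prove $\supseteq$. Let $\chi\in\text{C-Spec}\bigl(H(R,T)\bigr)$ satisfy $\chi(X^x)=1$ for $x\in I$, realized by a calibrated representation $V$ of $\C(q)H(R,T)$ on a non-zero vector $v$. The key point is that the elements $X^x$, $x\in I$, are central in $H(R,T)$, hence act on $V$ by scalars on each isotypic component; more precisely, consider the generalized eigenspace decomposition of $V$ under the commuting family $\{X^x\}_{x\in I}$. Since these operators are central, each generalized eigenspace is a subrepresentation of $V$; let $V'$ be the one on which $X^x$ has generalized eigenvalue $\chi(X^x)=1$ for all $x\in I$, which is non-zero because it contains $v$. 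On $V'$ the operators $X^x$, $x\in I$, are unipotent, but they are also diagonalizable since $V$ is calibrated, hence $X^x$ acts as the identity on $V'$ for every $x\in I$. Therefore $V'$ factors through the quotient $\C(q)H(R,T/I)$ by the relations $X^x=1$, $x\in I$; it is a calibrated representation of $\C(q)H(R,T/I)$ (the $X^{[x]}$ act as the $X^x$ do on $V'$, hence diagonalizably), and the character $\chi$, which descends to $T/I$ by hypothesis, is realized on $v\in V'$. Hence $\chi\in\text{C-Spec}\bigl(H(R,T/I)\bigr)$.

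The only genuinely substantive step is the passage from $V$ to the subrepresentation $V'$ in the $\supseteq$ direction: one must observe that centrality of the $X^x$, $x\in I$, makes their joint generalized eigenspaces into subrepresentations, and then combine the hypothesis $\chi(X^x)=1$ with calibratedness to upgrade ``generalized eigenvalue $1$'' to ``acts as the identity.'' Everything else is a routine manipulation of pull-back and quotient of representations, together with the fact that a diagonalizable operator whose only eigenvalue is $1$ is the identity. No type-specific input is needed; the argument works for any root datum.
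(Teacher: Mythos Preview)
Your proof is correct and follows the same overall strategy as the paper: pull back along the surjection for the inclusion $\subseteq$, and for $\supseteq$ extract from the given calibrated $H(R,T)$-module a piece on which $X^x$ acts as the identity for $x\in I$. The only difference is in how that piece is extracted. The paper takes the \emph{quotient} $V/W$, where $W$ is the span of the eigenvectors whose characters do not satisfy $\chi_i(X^x)=1$ for all $x\in I$; it identifies $W$ as the span of all $v-X^x(v)$ to see stability. You instead take the \emph{sub}representation $V'$, the joint $1$-eigenspace of the central family $\{X^x\}_{x\in I}$, and use diagonalizability to conclude $X^x=\mathrm{id}$ on $V'$. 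Both arguments hinge on the same fact (centrality of $X^x$ for $x\in I$), and since these operators are diagonalizable the good eigenspace and the quotient by the bad eigenspace carry the same characters; your subrepresentation route is arguably slightly more direct.
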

\begin{proof} We recall that $H(R,T/I)$ is the quotient of $H(R,T)$ by the relations $X^x=1$ for $x\in I$.

Let $V$ be a calibrated representation of $\C(q)H(R,T/I)$. By composition with the canonical surjective morphism from $H(R,T)$ to its quotient $H(R,T/I)$, the representation $V$ can be seen as a representation of $\C(q)H(R,T)$, which is obviously calibrated. The characters $\chi$ of $T$ appearing in this representation satisfy $\chi(X^x)=1$ for $x\in I$. This proves the inclusion from left to right.

Next, let $V$ be a calibrated representation of $\C(q)H(R,T)$ and let $\{v_1,\dots,v_N\}$ be a basis of $V$ of common eigenvectors for elements $X^x$, $x\in T$. Each vector $v_i$ corresponds to a character $\chi_i\in\text{C-Spec}\bigl(H(R,T)\bigr)$ as in Definition \ref{def-C-spec}. Order the basis vectors such that, for some $k\geq0$, the characters $\chi_i$ for $i\in\{1,\dots,k\}$ satisfy $\chi_i(X^x)=1$ for all $x\in I$, and the characters $\chi_{k+1},\dots,\chi_N$ do not.

Let $W$ be the subspace of $V$ spanned by the vectors $v_{k+1},\dots,v_N$. It is easy to see that $W$ coincides with the subspace spanned by all vectors $v-X^x(v)$, where $v\in V$ and $x\in I$. This way, we see that $W$ is stable by $H(R,T)$ since elements $X^x$, $x\in I$, are central in $H(R,T)$. Therefore, if non-zero, $V/W$ becomes a representation of $\C(q)H(R,T)$, obviously calibrated (a basis of common eigenvectors is formed by $v_1+W,\dots,v_k+W$). Moreover, the elements $X^x$, $x\in I$, act as the identity on $V/W$, so that $V/W$ becomes a calibrated representation of  $\C(q)H(R,T/I)$. This proves the inclusion from right to left.
\end{proof}

\begin{remark}\label{rem-cal}
The set of characters in $\text{Hom}(T/I,\C(q)^{\times})$ is naturally a subset of $\text{Hom}(T,\C(q)^{\times})$. The proposition can be formulated equivalently as $\text{C-Spec}\bigl(H(R,T/I)\bigr)=\text{C-Spec}\bigl(H(R,T)\bigr)\cap\text{Hom}(T/I,\C(q)^{\times})$. Besides, we note that the proof can be made shorter if we assume that all characters in $\text{C-Spec}\bigl(H(R,T)\bigr)$ appear in irreducible calibrated representations of $H(R,T)$. This will turn out to be true for the algebras to which we will apply the proposition (see Proposition \ref{prop-rep-quot}). \hfill$\triangle$
\end{remark}

\paragraph{\textbf{Characters and sequences.}} Let $\cB=(\delta_1,\dots,\delta_k)$ be an ordered $\Z$-basis of the free $\Z$-module $T$. Let $\Seq_{k}:=\bigl(\C(q)^{\times}\bigr)^{k}$ be the set of sequences of $k$ elements in $\C(q)$ all different from $0$. 

As $\cB$ is a basis of $T$, a character of $T$ is uniquely determined by its values on elements of $\cB$. Therefore, the set of characters $\text{Hom}(T,\C(q)^{\times})$ is identified with the set of sequences $\Seq_{k}$ by the following bijection
\begin{equation}\label{bij-seq} 
\text{Hom}(T,\C(q)^{\times})\ni\chi\ \ \longleftrightarrow\ \ \Bigl(\chi(X^{\delta_1}),\dots,\chi(X^{\delta_k})\Bigr)\in\Seq_k\ .
\end{equation}
This identification depends on the choice of the ordered basis $\cB$.

\begin{definition}
 We denote $\text{C-Eig}(\cB)$ the subset of $\Seq_{k}$ consisting of sequences corresponding to characters in the calibrated spectrum of $H(R,T)$, via the identification (\ref{bij-seq}).
\end{definition}
Thus, for any ordered basis $\cB$ of $T$, the identification (\ref{bij-seq}) restricts by definition to
\[\text{C-Spec}\bigl(H(R,T)\bigr) \ \ \longleftrightarrow\ \  \text{C-Eig}(\cB)\ ,\]
and identifies the calibrated spectrum $\text{C-Spec}\bigl(H(R,T)\bigr)$ with the set $\text{C-Eig}(\cB)$. To save space, we omit to indicate the algebra $H(R,T)$ in the notation $\text{C-Eig}(\cB)$. This should not cause any confusion as it will always be clear which affine Hecke algebra we will be considering.

\section{Affine Hecke algebras $H(R,\sv)$ and $\oH(R,\sv)$}\label{sec-def}

\subsection{Labellings of Dynkin diagrams}\label{subsec-def-R1}

From now on, let $R$ be an irreducible root system of simply-laced type (that is, $R=A_n$, $n\geq1$, or $R=D_n$, $n\geq 4$, or $R=E_n$, $n=6,7,8$). We consider the labelling of vertices of the Dynkin diagram of $R$ as shown in Figure \ref{Cox-ADE} and refer to it as the ``standard" labelling.

\begin{center}
\setlength{\unitlength}{0.01cm}
\begin{picture}(1550,900)(0,380)
\put(100,1180){$A_n$\ ($n\geq1$):}
\put(500,1200){\circle*{15}}
\put(490,1160){\scriptsize{$1$}}
\put(500,1200){\line(1,0){150}}
\put(650,1200){\circle*{15}}
\put(640,1160){\scriptsize{$2$}}
\put(650,1200){\line(1,0){50}}
\put(750,1200){$\ldots$}
\put(850,1200){$\ldots$}
\put(950,1200){\line(1,0){50}}
\put(1000,1200){\circle*{15}}
\put(970,1160){\scriptsize{$n-1$}}
\put(1000,1200){\line(1,0){150}}
\put(1150,1200){\circle*{15}}
\put(1140,1160){\scriptsize{$n$}}
\put(100,930){$D_n$\ ($n\geq4$):}
\put(470,1050){\scriptsize{$1$}}
\put(500,1050){\circle*{15}}
\put(500,1050){\line(1,-1){100}}
\put(470,840){\scriptsize{$2$}}
\put(500,850){\circle*{15}}
\put(500,850){\line(1,1){100}}
\put(600,950){\circle*{15}}
\put(590,910){\scriptsize{$3$}}
\put(600,950){\line(1,0){150}}
\put(750,950){\circle*{15}}
\put(740,910){\scriptsize{$4$}}
\put(750,950){\line(1,0){50}}
\put(850,950){$\ldots$}
\put(950,950){$\ldots$}
\put(1050,950){\line(1,0){50}}
\put(1100,950){\circle*{15}}
\put(1070,910){\scriptsize{$n-1$}}
\put(1100,950){\line(1,0){150}}
\put(1250,950){\circle*{15}}
\put(1240,910){\scriptsize{$n$}}
\put(100,680){$E_n$\ ($n=6,7,8$):}
\put(500,700){\circle*{15}}
\put(490,720){\scriptsize{$1$}}
\put(500,700){\line(1,0){150}}
\put(650,700){\circle*{15}}
\put(640,720){\scriptsize{$3$}}
\put(650,700){\line(1,0){150}}
\put(800,700){\circle*{15}}
\put(790,720){\scriptsize{$4$}}
\put(800,700){\line(0,-1){150}}
\put(800,550){\circle*{15}}
\put(800,510){\scriptsize{$2$}}
\put(800,700){\line(1,0){150}}
\put(950,700){\circle*{15}}
\put(940,720){\scriptsize{$5$}}
\put(950,700){\line(1,0){50}}
\put(1050,700){$\ldots$}
\put(1150,700){$\ldots$}
\put(1250,700){\line(1,0){50}}
\put(1300,700){\circle*{15}}
\put(1290,720){\scriptsize{$n$}}
\put(200,420){\begin{pic}\label{Cox-ADE} ``Standard" labelling of Dynkin diagrams of type ADE
\end{pic}}
\end{picture}
\end{center}

\paragraph{\textbf{New labelling of the Dynkin diagram.}} We choose a vertex of the Dynkin diagram of the root system $R$ and denote it by $\sv$. Given $R$ and $\sv$, we fix once and for all a new labelling of the set of vertices of the Dynkin diagram of $R$ such that (as shown in Fig. \ref{Cox-label}): 
\begin{itemize}
\item First, the distinguished vertex $\sv$ is labelled by $1$.
\item Then, $(1,2,\dots,l)$ is the shortest path from $\sv$ to an extremity of the Dynkin diagram.
\item If $\sv$ is not itself an extremity, then $(1,\un{2},\dots,\un{l'})$ is the shortest path from $\sv$ to another extremity of the Dynkin diagram.
\item Finally, in type $D$ or $E$, there is a trivalent vertex. Up to interchanging the two first paths, we can assume that this trivalent vertex is already labelled by $k$ for $1\leq k\leq l-1$. Then $(1,\dots,k,\unn{k+1}\dots,\unn{l''})$ is the shortest path from $\sv$ to the third extremity of the Dynkin diagram.
\end{itemize} 
\begin{remark} We make the following remarks/conventions:

$\bullet$ By convention, if $R=A_n$, we consider that $l=k=l''$, that is, in the partition above, we have $\{\unn{k+1},\dots,\unn{l''}\}=\emptyset$. 

$\bullet$ If $\sv$ is itself an extremity, then we consider that $l'=1$, that is, in the partition above, we have $\{\un{2},\dots,\un{l'}\}=\emptyset$

$\bullet$ It can happen that $k=1$. This happens whenever $\sv$ is a trivalent vertex, that is, when $R=D_n$ (respectively, $R=E_n$) and $\sv$ is the vertex labelled by $3$ (respectively, by $4$) in the standard labelling.
\hfill$\triangle$
\end{remark}

We will make no distinction between a vertex and its label in the new labelling. We denote by $\Ve_R$ the set of vertices of the Dynkin diagram of $R$. The new labelling results in the following partition of the set $\Ve_R$:
\begin{equation}\label{part}
\Ve_R=\{1,\dots,k,k+1,\dots,l\}\sqcup\{\un{2},\dots,\un{l'}\}\sqcup\{\unn{k+1},\dots,\unn{l''}\}\ .
\end{equation}

\begin{center}
\setlength{\unitlength}{0.01cm}
\begin{picture}(1550,600)(-150,-530)
\put(-10,30){\scriptsize{$\un{l'}$}}
\put(0,0){\circle*{15}}
\put(0,0){\line(1,0){50}}
\put(100,0){$\ldots$}
\put(200,0){\line(1,0){50}}
\put(240,30){\scriptsize{$\un{2}$}}
\put(250,0){\circle*{15}}
\put(250,0){\line(1,0){150}}
\put(380,30){\scriptsize{$\sv=1$}}
\put(400,0){\circle*{15}}
\put(400,0){\line(1,0){150}}
\put(540,30){\scriptsize{$2$}}
\put(550,0){\circle*{15}}
\put(550,0){\line(1,0){50}}
\put(650,0){$\ldots$}
\put(750,0){\line(1,0){50}}
\put(790,30){\scriptsize{$k$}}
\put(800,0){\circle*{15}}
\put(800,0){\line(0,-1){150}}
\put(820,-155){\scriptsize{$\unn{k+1}$}}
\put(800,-150){\circle*{15}}
\put(800,-150){\line(0,-1){50}}
\put(800,-280){$\vdots$}
\put(800,-350){\line(0,-1){50}}
\put(820,-405){\scriptsize{$\unn{l''}$}}
\put(800,-400){\circle*{15}}
\put(800,0){\line(1,0){150}}
\put(940,30){\scriptsize{$k+1$}}
\put(950,0){\circle*{15}}
\put(950,0){\line(1,0){50}}
\put(1050,0){$\ldots$}
\put(1150,0){\line(1,0){50}}
\put(1190,30){\scriptsize{$l$}}
\put(1200,0){\circle*{15}}
\put(200,-500){\begin{pic}\label{Cox-label} New labelling of the Dynkin diagram
\end{pic}}
\end{picture}
\end{center}

It will be very convenient to use the following notations:
\begin{equation}\label{notation}
\un{1}:=1\,,\ \ \ \ \ \ \ \unn{k}:=k\ \ \ \ \ \ \ \text{and}\ \ \ \ \ \ \ \left\{\begin{array}{ll}
\un{b}-1:=\un{b-1}\,, & \text{for $b=2,\dots,l'$,}\\[0.5em]
\unn{c}-1:=\unn{c-1}\,,\quad & \text{for $c=k+1,\dots,l''$.}
\end{array}\right.
\end{equation}

The new labelling of the Dynkin diagram induces a new labelling of the simple roots of $R$, of the simple coroots of $R$, of the simple reflections of $W_0(R)$ and of the generators of $H_0(R)$, by the set $\Ve_R$ as in (\ref{part}). They are now respectively denoted
\[\{\alpha_i\}_{i\in\Ve_R}\ ,\ \ \ \ \ \ \{\alpha^{\vee}_i\}_{i\in\Ve_R}\ ,\ \ \ \ \ \ \ \{r_i\}_{i\in\Ve_R}\ ,\ \ \ \ \ \ \ \{g_i\}_{i\in\Ve_R}\ .\]
We recall that, for $i,j\in\Ve_R$, we have $\alpha_i^{\vee}(\alpha_i)=2$ and, for $i\neq j$, $\alpha_j^{\vee}(\alpha_i)=-1$ if the vertices $i$ and $j$ are connected by an edge, while $\alpha_j^{\vee}(\alpha_i)=0$ otherwise.

\subsection{Extensions of the root lattice}\label{subsec-span}

\paragraph{\textbf{The $\Z$-module $L_{R,\sv}$.}} We extend the root lattice $Q_R$ into a larger free $\Z$-module by adding a vector denoted $\eps_{R,\sv}$, which we set to be $\Z$-linearly independent from $Q_R$. Thus, we consider the following free $\Z$-module:
\begin{equation}\label{def-L}
L_{R,\sv}:=\Z\eps_{R,\sv}\oplus Q_R\ .
\end{equation}
The coroots are maps from $Q_R$ to $\Z$ and we extend them to maps from $L_{R,\sv}$ to $\Z$ by setting:
\begin{equation}\label{def-eps}
\text{for $i\in\Ve_R$\,,}\ \ \ \ \ \ \alpha_i^{\vee}(\eps_{R,\sv}):=\left\{\begin{array}{ll}
-1\ \ & \text{if $i=1$\,,}\\[0.4em]
0 & \text{otherwise.}
\end{array}\right.
\end{equation}
This defines an inclusion $R^{\vee}\subset\text{Hom}_{\Z}(L_{R,\sv},\Z)$ which, together with the natural inclusion $Q_R\subset L_{R,\sv}$, forms a root datum.

\paragraph{\textbf{The basis $\cB_{R,\sv}$.}} We set, using notations (\ref{notation}),
\begin{equation}\label{form-basis-inv}
\delta_0:=\eps_{R,\sv}\ \ \ \ \ \quad\text{and}\quad\ \ \ \ \ \delta_{i}:=\delta_{i-1}+\alpha_{i}\,,\ \ \ \ \text{for $i\in\Ve_R$\,.}
\end{equation}
This defines recursively a set of vectors $\{\delta_j\}_{j\in\Ve_R\cup\{0\}}$. More explicitly, we have:
\begin{equation}\label{form-basis}
\begin{array}{ll}
\delta_{a}=\eps_{R,\sv}+\alpha_{1}+\dots+\alpha_{a}\,, & \text{for $a=0,1,\dots,l$,}\\[0.4em]
\delta_{\un{b}}=\eps_{R,\sv}+\alpha_{1}+\alpha_{\un{2}}\dots+\alpha_{\un{b}}\,, & \text{for $b=2,\dots,l'$,}\\[0.4em]
\delta_{\unn{c}}=\eps_{R,\sv}+\alpha_{1}+\dots+\alpha_{k}+\alpha_{\unn{k+1}}+\dots+\alpha_{\unn{c}}\,,\ \ \  & \text{for $c=k+1,\dots,l''$.}
\end{array}
\end{equation}
It follows at once that the set $\{\delta_{j}\}_{j\in\Ve_R\cup\{0\}}$ is a $\Z$-basis of the free $\Z$-module $L_{R,\sv}$. We will always use in the following the basis $\{\delta_{j}\}_{j\in\Ve_R\cup\{0\}}$ that we will order as follows:
\begin{equation}\label{basis}
\cB_{R,\sv}:=\left(\delta_{0},\delta_{1},\dots,\dots,\delta_{l},\,\delta_{\un{2}},\dots,\delta_{\un{l'}},\,\delta_{\unn{k+1}},\dots,\delta_{\unn{l''}}\right)\ .
\end{equation}
We emphasize that we consider $\cB_{R,\sv}$ as an ordered basis.

\subsection{Action of the Weyl group on $\cB_{R,\sv}$.} \label{subsec-act}

From Formulas (\ref{form-basis-inv})-(\ref{form-basis}), it is straightforward to calculate the action of the simple reflections $r_i$, $i\in\Ve_R$, of the Weyl group $W_0(R)$ on elements of the basis $\cB_{R,\sv}$. It results in the following formulas:
\begin{equation}\label{act-W1}
r_i(\delta_{i-1})=\delta_i\ \ \ \ \text{and}\ \ \ \ r_i(\delta_i)=\delta_{i-1}\,,\ \ \qquad\text{for $i\in\Ve_R$\,,} 
\end{equation}
and
\begin{equation}\label{act-W2}
\hspace{-1.7cm}\left\{\begin{array}{rcll}
r_{2}(\delta_{\un{b}}) & = & \delta_{\un{b}}+\delta_{2}-\delta_{1} & \text{for $b=2,\dots,l'$\,,}\\[0.4em]
r_{\un{2}}(\delta_{a}) &  = & \delta_{a}+\delta_{\un{2}}-\delta_{1}\qquad & \text{for $a=2,\dots,l$\,,}
\end{array}\right.
\end{equation}
\begin{equation}\label{act-W3}
\hspace{-0.2cm}\left\{\begin{array}{rcll}
r_{k+1}(\delta_{\unn{c}}) & = & \delta_{\unn{c}}+\delta_{k+1}-\delta_{k}\qquad & \text{for $c=k+1,\dots,l''$\,,}\\[0.4em]
r_{\unn{k+1}}(\delta_{a}) & = & \delta_{a}+\delta_{\unn{k+1}}-\delta_{k}\qquad & \text{for $a=k+1,\dots,l$\,,}
\end{array}\right.
\end{equation}
\begin{equation}\label{act-W4}
\hspace{-0cm}
\left\{\begin{array}{rcll} 
r_{\un{2}}(\delta_{\unn{c}}) &  = & \delta_{\unn{c}}+\delta_{\un{2}}-\delta_{1}\qquad &\text{for $c=k+1,\dots,l''$\,,}\\[0.4em]
r_{\unn{2}}(\delta_{\un{b}}) & = & \delta_{\un{b}}+\delta_{\unn{2}}-\delta_{1} & \text{for $b=2,\dots,l'$\ \ (if $k=1$)\,.}
\end{array}\right.
\end{equation}
Finally, if the action of a simple reflection $r_i$ on a basis element $\delta_j$ is not specified above, it means that it is trivial:
\begin{equation}\label{act-W5}
r_i(\delta_{j})=\delta_j\,,\ \ \qquad\text{otherwise.}
\end{equation}
Note that the second line in (\ref{act-W4}) is present only when $k=1$. If $k>1$ then $r_{\unn{k+1}}$ acts trivially on $\delta_{\un{b}}$ for $b=2,\dots,l'$.

\subsection{The affine Hecke algebra $H(R,\sv)$}\label{subsec-H}

Given $R$ and $\sv$ as before, we denote by $H(R,\sv)$ the affine Hecke algebra associated to the root datum $\bigl(Q_R\subset L_{R,\sv},\  R^{\vee}\!\subset\text{Hom}_{\Z}(L_{R,\sv},\Z)\bigr)$ defined in Section \ref{subsec-span}.

From the general definition of affine Hecke algebras, the algebra $H(R,\sv)$ is generated by $g_i$, $i\in\Ve_R$, and $X^x$, $x\in L_{R,\sv}$, subject to the defining relations as recalled in Section \ref{sec-prel}. We will make more explicit the relations by considering generators of $H(R,\sv)$ associated to the basis $\cB_{R,\sv}$ of the $\Z$-module $L_{R,\sv}$.

We define 
\[X_{j}:=X^{\delta_{j}}\,,\ \ \quad\text{for $j\in\Ve_R\cup\{0\}$.}\]
Since $\cB_{R,\sv}$ is a basis of $L_{R,\sv}$, the affine Hecke algebra $H(R,\sv)$ is generated by the following elements:
\[g_i,\ i\in\Ve_R\,\ \ \ \ \text{and}\ \ \ \ X_{j}^{\pm1},\ j\in\Ve_R\cup\{0\}\ .\]
Due to Remark \ref{rem-fin-pres}, the set of defining relations for $H(R,\sv)$ can be reduced to a finite set of relations involving the generators above. We first have the relations involving only the generators $g_i$, $i\in\Ve_R$, which are given in (\ref{rel-H0}). We also have the relations involving only the generators $X_j$, $j\in\Ve_R\cup\{0\}$:
\begin{equation}\label{rel-H-XX}
X_iX_j=X_jX_i\ \ \ \ \ \ \text{for any $i,j\in\Ve_R\cup\{0\}$.}
\end{equation} 
Then we have the relations between generators $g_i$, $i\in\Ve_R$ and $X_j$, $j\in\Ve_R\cup\{0\}$. They are given in a uniform manner by Relation (\ref{rel-Lu}), where $x$ runs over the basis elements in $\cB_{R,\sv}$. Using the action of $W_0(R)$ on the basis $\cB_{R,\sv}$ calculated in (\ref{act-W1})--(\ref{act-W5}), we give them explicitly. Note that only the formulas in (\ref{rel-Lu2}) are used. First, corresponding to (\ref{act-W1}), we have:
\begin{equation}\label{rel-H-gX1}
g_{i}X_{i-1}g_{i} = X_{i}\ ,\ \ \quad\text{for $i\in\Ve_R$\,.}
\end{equation}
Then, corresponding successively to (\ref{act-W2})--(\ref{act-W4}), we have:
\begin{equation}\label{rel-H-gX2}
\hspace{-1.9cm}\left\{\begin{array}{rcll}
g_{2}X_{\un{b}}g_{2} & = & X_{\un{b}}X_{2}X_{1}^{-1}  & \text{for $b=2,\dots,l'$\,,}\\[0.6em]
g_{\un{2}}X_{a}g_{\un{2}} & = & X_{a}X_{\un{2}}X_{1}^{-1} \qquad & \text{for $a=2,\dots,l$\,,}
\end{array}\right.
\end{equation}
\begin{equation}\label{rel-H-gX3}
\hspace{-0cm}\left\{\begin{array}{rcll}
g_{k+1}X_{\unn{c}}g_{k+1} & = & X_{\unn{c}}X_{k+1}X_{k}^{-1}\qquad & \text{for $c=k+1,\dots,l''$\,,}\\[0.6em]
g_{\unn{k+1}}X_{a}g_{\unn{k+1}} & = & X_{a}X_{\unn{k+1}}X_{k}^{-1} \quad & \text{for $a=k+1,\dots,l$\,,}
\end{array}\right.
\end{equation}
\begin{equation}\label{rel-H-gX4}
\hspace{-0.2cm}
\left\{\begin{array}{rcll}
g_{\un{2}}X_{\unn{c}}g_{\un{2}} & = & X_{\unn{c}}X_{\un{2}}X_{1}^{-1}  & \text{for $c=k+1,\dots,l''$\,,}\\[0.6em]
g_{\unn{2}}X_{\un{b}}g_{\unn{2}} & = & X_{\un{b}}X_{\unn{2}}X_{1}^{-1} \qquad &\text{for $b=2,\dots,l'$\ \ (if $k=1$)\,.}
\end{array}\right.
\end{equation}
Again, the second relation in (\ref{rel-H-gX4}) is present only if $k=1$. Lastly, there are cases where $g_i$ and $X_j$ commute. It happens if and only if $r_i(\delta_j)=\delta_j$. To give the precise list, let $\mathcal{CP}$ be the set consisting of pairs $(i,j)$, where $i\in\Ve_R$ and $j\in\Ve_R\cup\{0\}$, such that:
\begin{itemize}
\vspace{-0.1cm}
\item first, $j\notin\{i,i-1\}$;
\item second, $(i,j)$ is not one of the pairs appearing in left hand sides of (\ref{rel-H-gX2})--(\ref{rel-H-gX4}).
\end{itemize}
Then we have:
\begin{equation}\label{rel-H-gX5}
g_iX_{j}= X_{j}g_i\,,\ \ \quad \text{for any $(i,j)\in\mathcal{CP}$\,.}
\end{equation}
Note in particular that $g_iX_{0}= X_{0}g_i$ if $i\neq 1$.

\begin{remark}\label{rem-rel} 
We will see in the next subsection that the first set of relations (\ref{rel-H-gX1}) occurs inside subalgebras isomorphic to affine Hecke algebras of GL. Vaguely speaking, the remaining relations (\ref{rel-H-gX2})--(\ref{rel-H-gX5}) can be seen as ``mixed" relations expressing how these subalgebras are ``glued" together to form $H(R,\sv)$. Besides, Relations (\ref{rel-H-gX1})--(\ref{rel-H-gX5}) (or equivalently, Relations (\ref{act-W1})--(\ref{act-W5})) will be interpreted combinatorially in Section \ref{sec-tab}, in terms of an action of the Weyl group on some combinatorial objects to be introduced later. \hfill$\triangle$
\end{remark}

\subsection{Subalgebras isomorphic to $H(GL_{N+1})$}\label{subsec-GL}

\paragraph{\textbf{The case $R=A_N$ and $\sv$ an extremity.}} Let $N>0$. We consider the particular situation where $R=A_N$ and $\sv$ is an extremity of the Dynkin  diagram (say, $\sv=1$ in the standard labelling of Fig. \ref{Cox-ADE}). In this case, the new labelling of the Dynkin diagram of Fig. \ref{Cox-label} coincides with the standard one:
\begin{center}
\setlength{\unitlength}{0.01cm}
\begin{picture}(1550,50)(150,1160)
\put(500,1200){\circle*{15}}
\put(490,1160){\scriptsize{$1$}}
\put(500,1200){\line(1,0){150}}
\put(650,1200){\circle*{15}}
\put(640,1160){\scriptsize{$2$}}
\put(650,1200){\line(1,0){50}}
\put(750,1200){$\ldots$}
\put(850,1200){$\ldots$}
\put(950,1200){\line(1,0){50}}
\put(1000,1200){\circle*{15}}
\put(970,1160){\scriptsize{$N-1$}}
\put(1000,1200){\line(1,0){150}}
\put(1150,1200){\circle*{15}}
\put(1140,1160){\scriptsize{$N$}}
\end{picture}
\end{center}
In this case, we have $\{\un{2},\dots,\un{l'}\}=\{\unn{k+1},\dots,\unn{l''}\}=\emptyset$, that is, $l'=1$ and $l''=k=l$. The ordered basis $\cB_{A_N,1}$ in (\ref{basis}) of the $\Z$-module $L_{A_N,1}$ reads
\[\cB_{A_N,1}=(\delta_{0},\delta_1,\dots,\delta_N)\ ,\]
where $\delta_i=\eps_{A_N,1}+\alpha_1+\dots+\alpha_i$ for $i=0,1,\dots,N$. The Weyl group here is the symmetric group $S_{N+1}$ and it acts by permuting the $\delta_i$'s: the simple reflection $r_i$, $i\in\{1,\dots,N\}$, exchanges $\delta_i$ and $\delta_{i-1}$.

With the presentation of the preceding subsection, the algebra $H(A_N,1)$ is generated by elements (to avoid confusion later, we rename the generators: $g_i\leadsto \sigma_i$ and $X_j\leadsto Y_j$)
\[\sigma_1,\dots,\sigma_{N},Y_0^{\pm1},Y_1^{\pm1},\dots,Y_N^{\pm1}\ ,\]
and the relations between generators $\sigma_i$ and $Y_j$ are (Relations (\ref{rel-H-gX2})--(\ref{rel-H-gX4}) are not present here):
\begin{equation}\label{rel-GL2}
\begin{array}{ll}
\sigma_iY_{i-1}\sigma_i=Y_{i}\qquad& \text{for $i\in\{1,\dots,N\}$}\,,\\[0.2em]
\sigma_iY_j=Y_j\sigma_i& \text{for $i\in\{1,\dots,N\}$, $j\in\{0,1,\dots,N\}$ such that $j\neq i-1,i$\,.}
\end{array}
\end{equation}
This algebra is the usual affine Hecke algebras of $GL(N+1)$ and will thus be denoted in the following by $H(GL_{N+1})$.

\paragraph{\textbf{Distinguished subalgebras of $H(R,\sv)$.}} We return now to the general setting of $H(R,\sv)$. We use below the general property on parabolic root subdata recalled in Section \ref{sec-prel}.

We set:
\begin{equation}\label{def-P}
\cP_1:=\{1,2,\dots,l\}\,,\ \ \ \ \ \cP_2:=\{1,\un{2},\dots,\un{l'}\}\,,\ \ \ \ \ \cP_3:=\{1,\dots,k,\unn{k+1},\dots,\unn{l''}\}\,.
\end{equation}
Let $u\in\{1,2,3\}$. We consider the following subset of $\Ve_R$ (\emph{i.e.} of simple roots) and the following submodule of $L_{R,\sv}$:
\[\cP_u\ \ \ \ \quad\text{and}\ \ \ \ \quad\Z\eps_{R,\sv}\oplus\bigoplus_{j\in\cP_u}\Z\alpha_{j}\ ,\]
and we denote by $\cH_u$ the corresponding subalgebra of $H(R,\sv)$:
\begin{equation}\label{def-Hu}
\cH_u:=\langle X_{0}^{\pm1},\ \ g_j,X_j^{\pm1}\ (j\in\cP_u)\ \rangle\ .
\end{equation}
For $u=1,2,3$, we have that $\cH_u$ is isomorphic to $H(GL_{l_u+1})$, where $l_u$ is equal to $l$, $l'$ and $l''$ respectively. The isomorphisms are given on the generators by
\begin{equation}\label{iso-sub1}
\begin{array}{rcl}
\cH_1 & \cong & H(GL_{l+1})\\[0.2em]
g_{a} & \mapsto & \sigma_a\ \ \quad (a=1,\dots,l)\,,\\[0.2em]
X_{a} & \mapsto & Y_a\ \ \quad (a=0,1,\dots,l)\,,
\end{array}
\end{equation}
\begin{equation}\label{iso-sub2}
\begin{array}{c}
\cH_2\ \cong\ H(GL_{l'+1})\\[0.2em]
g_{1}\ \mapsto\ \sigma_1\qquad\qquad\qquad\text{and}\quad\ g_{\un{b}}\ \mapsto\ \sigma_b\ \ \quad (b=2,\dots,l')\,,\\[0.2em]
X_{a}\ \mapsto\ Y_a\ \ \ (a=0,1)\ \quad\text{and}\quad\ X_{\un{b}}\ \mapsto\ Y_b\ \ \quad (b=2,\dots,l')\,,
\end{array}
\end{equation}
\begin{equation}\label{iso-sub3}
\begin{array}{c}
\cH_3\ \cong\ H(GL_{l''+1})\\[0.2em]
g_{a}\ \mapsto\ \sigma_a\ \ \ (a=1,\dots,k)\ \quad\text{and}\ \quad g_{\unn{c}}\ \mapsto\ \sigma_c\ \ \ (c=k+1,\dots,l'')\,,\\[0.2em]
X_{a}\ \mapsto\ Y_a\ \ \ (a=0,1,\dots,k)\ \quad\text{and}\ \quad X_{\unn{c}}\ \mapsto\ Y_c\ \ \ (c=k+1,\dots,l'')\,.
\end{array}
\end{equation}

\subsection{The quotient affine Hecke algebra $\oH(R,\sv)$ of $H(R,\sv)$}\label{subsec-quot}

\paragraph{\textbf{Invariant for the action and quotient root datum.}} Recall that $\sv$ is a chosen vertex of the Dynkin diagram of $R$, which is labelled by $1$ in the new labelling.  The corresponding simple root  is $\alpha_1$ (in the new labelling that we always use). Let $\omega_{\sv}$ be the fundamental weight corresponding to $\alpha_{1}$. This is the element of $\R\otimes_{\Z}Q_R$ determined by $\alpha^{\vee}_{1}(\omega_{\sv})=1$ and $\alpha^{\vee}_{i}(\omega_{\sv})=0$ for $i\in\Ve_R\backslash\{1\}$. As an element of $\R\otimes_{\Z}Q_R$, the fundamental weight $\omega_{\sv}$ can be written as
\begin{equation}\label{weight}
\omega_{\sv}=\frac{1}{n_0}\sum_{i\in\Ve_R}n_i\,\alpha_i\ ,
\end{equation}
where $n_j$, $j\in\Ve_R\cup\{0\}$, are integers such that $n_0$ is the smallest possible positive integer (that is, $n_0>0$ and $\text{GCD}\bigl\{n_j\,,\ j\in\Ve_R\cup\{0\}\bigr\}=1$). We then define:
\begin{equation}\label{def-eps0}
\Delta_{R,\sv}:=n_0\eps_{R,\sv}+\sum_{i\in\Ve_R} n_i\,\alpha_i\in L_{R,\sv}\ .
\end{equation}
\begin{lemma}\label{lem-quot}
\begin{enumerate}
\item The element $\Delta_{R,\sv}\in L_{R,\sv}$ is invariant under the action of $W_0(R)$.
\item The quotient module $L_{R,\sv}/\Z\Delta_{R,\sv}$ is isomorphic as a $W_0(R)$-module to $Q_R+\Z\omega_{\sv}$.
\end{enumerate}
\end{lemma}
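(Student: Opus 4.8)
The plan is to treat the two parts separately; both reduce quickly to the identity $\sum_{i\in\Ve_R}n_i\alpha_i=n_0\,\omega_\sv$ (a restatement of (\ref{weight})) together with the definition $\alpha_i^\vee(\eps_{R,\sv})=-\delta_{i,1}$ from (\ref{def-eps}). For part (1), recall that $W_0(R)$ is generated by the simple reflections $r_i$, and $r_i$ fixes a vector $x$ exactly when $\alpha_i^\vee(x)=0$; so it suffices to show $\alpha_i^\vee(\Delta_{R,\sv})=0$ for every $i\in\Ve_R$. Applying $\alpha_i^\vee$ to (\ref{def-eps0}) and using $\sum_j n_j\alpha_j=n_0\omega_\sv$ gives $\alpha_i^\vee(\Delta_{R,\sv})=n_0\,\alpha_i^\vee(\eps_{R,\sv})+n_0\,\alpha_i^\vee(\omega_\sv)$; for $i\neq1$ both terms vanish (by (\ref{def-eps}) and the defining property $\alpha_i^\vee(\omega_\sv)=\delta_{i,1}$), and for $i=1$ they equal $-n_0$ and $+n_0$ and cancel. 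Hence every $r_i$, and thus all of $W_0(R)$, fixes $\Delta_{R,\sv}$. (In particular $\alpha^\vee(\Delta_{R,\sv})=0$ for every root $\alpha$, since each root is $W_0(R)$-conjugate to a simple one and $\Delta_{R,\sv}$ is $W_0(R)$-fixed; this is what allows the quotient-root-datum construction of Section \ref{sec-prel} to be applied to $I=\Z\Delta_{R,\sv}$.)

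For part (2) I would write down an explicit $W_0(R)$-equivariant homomorphism and compute its kernel and image. Since $\{\eps_{R,\sv}\}\cup\{\alpha_i\}_{i\in\Ve_R}$ is a $\Z$-basis of $L_{R,\sv}$, we may define $\phi\colon L_{R,\sv}\to\R\otimes_\Z Q_R$ by $\phi(\eps_{R,\sv}):=-\omega_\sv$ and $\phi(\alpha_i):=\alpha_i$ for $i\in\Ve_R$, extended $\Z$-linearly; its image is clearly $Q_R+\Z\omega_\sv$. To check $W_0(R)$-equivariance it is enough to verify $\phi\circ r_i=r_i\circ\phi$ for the simple reflections, and since both actions are $x\mapsto x-\alpha_i^\vee(x)\alpha_i$ and $\phi$ is linear with $\phi(\alpha_i)=\alpha_i$, this amounts to the identity $\alpha_i^\vee(\phi(x))=\alpha_i^\vee(x)$ on a $\Z$-basis of $L_{R,\sv}$: it is trivial for $x=\alpha_j$, and for $x=\eps_{R,\sv}$ it reads $-\alpha_i^\vee(\omega_\sv)=\alpha_i^\vee(\eps_{R,\sv})$, which is exactly (\ref{def-eps}).

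It then remains to show $\ker\phi=\Z\Delta_{R,\sv}$. One inclusion is the computation $\phi(\Delta_{R,\sv})=-n_0\omega_\sv+\sum_i n_i\alpha_i=0$. For the reverse, $\ker\phi$ has rank $1$ (the image has the full rank of $Q_R$) and is a pure submodule of $L_{R,\sv}$, because $L_{R,\sv}/\ker\phi$ embeds into the torsion-free module $\R\otimes_\Z Q_R$; a pure rank-one submodule of a free $\Z$-module is $\Z v$ with $v$ primitive, and $\Delta_{R,\sv}$ is primitive since its coordinates $(n_0,(n_i)_{i\in\Ve_R})$ in the above basis have $\gcd$ equal to $1$ by the normalization of the $n_j$ in (\ref{weight}). (Alternatively, avoiding purity: if $x=c\eps_{R,\sv}+q\in\ker\phi$ with $q\in Q_R$ then $q=c\omega_\sv$, and $c\omega_\sv\in Q_R$ forces $n_0\mid c$ by the same $\gcd$ condition, so $x\in\Z\Delta_{R,\sv}$.) By the first isomorphism theorem, $\phi$ induces a $W_0(R)$-module isomorphism $L_{R,\sv}/\Z\Delta_{R,\sv}\xrightarrow{\ \sim\ }Q_R+\Z\omega_\sv$; in particular the quotient is a free $\Z$-module, as needed to apply the construction of Section \ref{sec-prel}.

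Neither part is genuinely difficult. The only two points requiring care are the sign in the definition of $\phi$ (forced by $\alpha_1^\vee(\eps_{R,\sv})=-1$, so that $\Delta_{R,\sv}$ lands in the kernel), and the appeal to the $\gcd=1$ normalization of the $n_j$, which is exactly what prevents $\ker\phi$ from being a rank-one lattice strictly larger than $\Z\Delta_{R,\sv}$.
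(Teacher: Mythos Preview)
Your proof is correct and follows essentially the same approach as the paper: part (i) is identical (computing $\alpha_i^\vee(\Delta_{R,\sv})=0$ via the two contributions), and part (ii) uses the very same map $\phi$ with the same equivariance check and kernel computation. Your ``alternative'' argument for $\ker\phi=\Z\Delta_{R,\sv}$ is in fact exactly the paper's argument; the purity version you give first is a mild repackaging of the same $\gcd$ fact and is equally valid.
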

\begin{proof}
(i) By definition of the fundamental weight $\omega_{\sv}$ and its expression (\ref{weight}), we have:
\[\text{for $i\in\Ve_R$\,,}\ \ \ \ \ \ \alpha_{i}^{\vee}\Bigl(\sum_{i\in\Ve_R}\! n_i\,\alpha_i\Bigr)=\left\{\begin{array}{ll}
n_0\quad & \text{if $i=1$\,,}\\[0.4em]
0 & \text{otherwise.}
\end{array}\right.\]
 From the action of $\alpha_i^{\vee}$ on $\eps_{R,\sv}$ set in (\ref{def-eps}), it follows that $\alpha_i^{\vee}(\Delta_{R,\sv})=0$ for any $i\in\Ve_R$.

(ii) We define a $\Z$-linear map $\phi$ from $L_{R,\sv}$ to $Q_R+\Z\omega_{\sv}$ by:
\begin{equation}\label{phi}
\eps_{R,\sv}\mapsto-\omega_{\sv}\ \ \ \ \ \text{and}\ \ \ \ \ L_{R,\sv}\ni\alpha_i\mapsto\alpha_i\in Q_R+\Z\omega_{\sv}\ \ \ (i\in\Ve_R)\ .
\end{equation}
As $\alpha_i^{\vee}(\eps_{R,\sv})=\alpha_i^{\vee}(-\omega_{\sv})$ for $i\in\Ve_R$, we have that $\phi$ is an homomorphism of $W_0(R)$-modules. Moreover, it is obviously surjective. It remains to check that the kernel of $\phi$ coincides with $\Z\Delta_{R,\sv}$. By construction, we have $\phi(\Delta_{R,\sv})=0$. On the other hand, let $x:=c\eps_{R,\sv}+\sum_{i\in\Ve_R} c_i\alpha_i\in L_{R,\sv}$. We have:
\[\phi(x)=-c\omega_{\sv}+\sum_{i\in\Ve_R} c_i\alpha_i=0\ \ \quad\Longleftrightarrow\ \ \quad c_i=\frac{cn_i}{n_0}\ \ \ \text{for $i\in\Ve_R$.}\]
In particular, it implies that $\displaystyle\frac{cn_i}{n_0}\in\Z$ for all $i\in\Ve_R$, and thus that $\displaystyle\frac{c}{n_0}\in\Z$ since by assumption, we have  $\text{GCD}\bigl\{n_j\,,\ j\in\Ve_R\cup\{0\}\bigr\}=1$. We conclude that $x=\displaystyle\frac{c}{n_0}\Delta_{R,\sv}\in \Z\Delta_{R,\sv}$.
\end{proof}

\begin{remark}\label{quot-root}
As $\sv$ runs over the set of vertices of the Dynkin diagram of $R$, the $\Z$-module $Q_R+\Z\omega_{\sv}$ varies among the set of $\Z$-modules $L$ such that $Q_R\subseteq L\subseteq P_R$. In fact, all such $\Z$-modules $L$ are obtained this way, with two exceptions, which are the root lattice when $R=A_n$, and the weight lattice when $R=D_n$ and $n$ is even (this follows from the explicit formulas for the fundamental weights, see \emph{e.g.} \cite{Hu}). We also note that, for a given $R$, different choices of $\sv$ can produce the same $\Z$-modules; for example, in type $E_8$, the $\Z$-module $Q_R+\Z\omega_{\sv}$ is the root lattice for any $\sv$.\hfill$\triangle$
\end{remark}

\paragraph{\textbf{The algebra $\oH(R,\sv)$.}}
The $\Z$-module $Q_R+\Z\omega_{\sv}$ satisfies $Q_R\subseteq Q_R+\Z\omega_{\sv}\subseteq P_R$, and so, as recalled in Example \ref{ex-prel}, it defines a root datum. We define $\oH(R,\sv)$ to be the corresponding affine Hecke algebra associated to $Q_R+\Z\omega_{\sv}$.

We set:
\begin{equation}\label{def-C}
C_{R,\sv}:=X^{\Delta_{R,\sv}}\in H(R,\sv)\ .
\end{equation}

The following is a corollary of Lemma \ref{lem-quot}, where we apply the general setting of quotient root data recalled in Section \ref{sec-prel}.
\begin{corollary}\label{cor-quot}
\begin{enumerate}
\item The element $C_{R,\sv}$ is central in $H(R,\sv)$.
\item The quotient of $H(R,\sv)$ over the relation $C_{R,\sv}=1$ is isomorphic to $\oH(R,\sv)$.
\end{enumerate}
\end{corollary}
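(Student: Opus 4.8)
The plan is to deduce Corollary \ref{cor-quot} directly from Lemma \ref{lem-quot} by invoking the general machinery on quotient root data set up in Section \ref{sec-prel}. Recall that for a $\Z$-submodule $I\subseteq T$ contained in $\bigcap_{\alpha\in R}\ker(\alpha^\vee)$ and with $T/I$ free, there is a quotient root datum $\bigl(Q_R\subseteq T/I,\ R^\vee\subset\text{Hom}_\Z(T/I,\Z)\bigr)$ and the associated affine Hecke algebra $H(R,T/I)$ is the quotient of $H(R,T)$ over the relations $X^x=1$ for $x\in I$; moreover the elements $X^x$, $x\in I$, are central in $H(R,T)$. Here the strategy is to apply this with $T=L_{R,\sv}$ and $I=\Z\Delta_{R,\sv}$.

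First I would verify the three hypotheses needed. By Lemma \ref{lem-quot}(i), $\Delta_{R,\sv}$ is $W_0(R)$-invariant, which is exactly the statement $\alpha^\vee(\Delta_{R,\sv})=0$ for all $\alpha\in R$ (it suffices to check this on simple coroots, already done in the lemma, since a general $\alpha^\vee$ is a $\Z$-combination of simple ones via the $W_0(R)$-action, or equivalently since $r_i(\Delta_{R,\sv})=\Delta_{R,\sv}$ for all $i$ implies $\Delta_{R,\sv}$ is fixed by all of $W_0(R)$ and hence annihilated by every $\alpha^\vee=$ the linear form $x\mapsto\alpha^\vee(x)$ attached to the reflection $r_\alpha$). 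Thus $I:=\Z\Delta_{R,\sv}\subseteq\ker(\alpha^\vee)$ for every $\alpha\in R$. Next, since $\Delta_{R,\sv}\neq 0$ and $L_{R,\sv}$ is free, $I=\Z\Delta_{R,\sv}$ is free of rank one; and by Lemma \ref{lem-quot}(ii) the quotient $L_{R,\sv}/\Z\Delta_{R,\sv}$ is isomorphic as a $\Z$-module (indeed as a $W_0(R)$-module) to $Q_R+\Z\omega_{\sv}\subseteq P_R$, which is free, so $T/I$ is free. Hence all hypotheses of the quotient-root-datum construction are met.

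Now part (i) is immediate: $C_{R,\sv}=X^{\Delta_{R,\sv}}$ with $\Delta_{R,\sv}\in I$, and elements $X^x$ for $x\in I$ are central in $H(R,T)=H(R,\sv)$. For part (ii), the general discussion gives that $H(R,L_{R,\sv}/\Z\Delta_{R,\sv})$ is isomorphic to the quotient of $H(R,\sv)$ over the relations $X^x=1$ for $x\in\Z\Delta_{R,\sv}$; since $\Z\Delta_{R,\sv}=\Z\cdot\Delta_{R,\sv}$ is generated by $\Delta_{R,\sv}$ and $X^{k\Delta_{R,\sv}}=(X^{\Delta_{R,\sv}})^k$, the single relation $X^{\Delta_{R,\sv}}=C_{R,\sv}=1$ implies all of them, so this quotient is exactly $H(R,\sv)/(C_{R,\sv}=1)$. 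Finally, the $W_0(R)$-module isomorphism $L_{R,\sv}/\Z\Delta_{R,\sv}\cong Q_R+\Z\omega_{\sv}$ of Lemma \ref{lem-quot}(ii) identifies the two root data, hence induces an isomorphism of affine Hecke algebras $H(R,L_{R,\sv}/\Z\Delta_{R,\sv})\cong\oH(R,\sv)$ (one checks on generators $g_i\mapsto g_i$, $X^{[x]}\mapsto X^{\phi(x)}$ that the defining relations \eqref{rel-H0} and \eqref{rel-Lu} are preserved, which is automatic since an isomorphism of root data gives an isomorphism of the associated affine Hecke algebras by functoriality of the presentation). Composing, $H(R,\sv)/(C_{R,\sv}=1)\cong\oH(R,\sv)$, as claimed.

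There is essentially no obstacle here — the corollary is a formal consequence of Lemma \ref{lem-quot} and the preparatory material on quotient root data. The only mild subtlety, worth a sentence in the write-up, is to be careful that the isomorphism of $\Z$-modules in Lemma \ref{lem-quot}(ii) is \emph{compatible with the $W_0(R)$-action and with the coroots}, so that it genuinely matches the two root data and not merely the underlying lattices; this is why the map $\phi$ in \eqref{phi} was arranged to satisfy $\alpha_i^\vee(\eps_{R,\sv})=\alpha_i^\vee(-\omega_{\sv})$, ensuring $\phi$ intertwines the coroot forms and hence the affine Hecke algebra relations \eqref{rel-Lu}.
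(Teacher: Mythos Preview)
Your proposal is correct and follows exactly the route the paper indicates: the corollary is deduced from Lemma \ref{lem-quot} by applying the general quotient-root-datum framework of Section \ref{sec-prel}, and you have simply spelled out the verification of hypotheses and the identification of the single relation $C_{R,\sv}=1$ with the family $X^x=1$, $x\in\Z\Delta_{R,\sv}$. The paper itself leaves the proof implicit, so your write-up is a faithful expansion of what is intended.
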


It is desirable to express $C_{R,\sv}$ in terms of the generators $X_j$, $j\in\Ve_R\cup\{0\}$, of $H(R,\sv)$ that we are using. To do this, we need to decompose the element $\Delta_{R,\sv}$ in the basis $\cB_{R,\sv}$ of $L_{R,\sv}$. Recall that
\[\Delta_{R,\sv}:=n_0\eps_{R,\sv}+\sum_{i\in\Ve_R} n_i\,\alpha_i\ ,\]
where $n_j$, $j\in\Ve_R\cup\{0\}$ are as explained after (\ref{def-eps0}). It is straightforward, using (\ref{form-basis-inv}),  to check that
\begin{equation}\label{form-Delta}
\Delta_{R,\sv}=\sum_{j\in\Ve_R\cup\{0\}} \kappa_j\,\delta_j\ \ \ \ \ \text{where $\,\kappa_j=n_j - \sum_{j=i-1}\! n_{i}$ ,}
\end{equation}
and where the sum in the expression for $\kappa_j$ is taken over $i\in\Ve_R$ satisfying $j=i-1$. For example, $\kappa_0=n_0-n_1$, $\kappa_1=n_1-n_2-n_{\un{2}}$ if $k>1$ and $\kappa_1=n_1-n_2-n_{\un{2}}-n_{\unn{2}}$ if $k=1$.

To sum up, the central element $C_{R,\sv}$ of $H(R,\sv)$ is given in terms of the generators by:
\begin{equation}\label{form-C}
C_{R,\sv}=\prod_{j\in\Ve_R\cup\{0\}} X_j^{\kappa_j}\ ,
\end{equation}
where the powers $\kappa_j$ are given in (\ref{form-Delta}), and the affine Hecke algebra $\oH(R,\sv)$ is the quotient of $H(R,\sv)$ over the relation $\prod_{j\in\Ve_R\cup\{0\}}X_j^{\kappa_j}=1$.

\begin{example}\label{ex-C-GL}
Let $R=A_N$ and $\sv=1$ in the standard labelling. The resulting affine Hecke algebra is $H(GL_{N+1})$; see Section \ref{subsec-GL}. In this case, the application of the formulas above yields the following expression for the central element:
\[C_{A_N,1}=Y_0Y_1\dots Y_N\ ,\]
since the fundamental weight $\omega_{\sv}$ is given by $\omega_{\sv}=\frac{1}{N+1}\bigl(N\alpha_1+(N-1)\alpha_2+\dots+\alpha_N\bigr)$. So here, the algebra $\oH_{A_N,1}$ is the quotient over $Y_0Y_1\dots Y_N=1$. It is the affine Hecke algebra associated to the weight lattice $P_{A_N}$, or in other words, the affine Hecke algebra associated to $SL_{N+1}$.\hfill$\triangle$
\end{example}

\section{Action and truncated action of the Weyl group on sequences}\label{sec-act}

We fix $R$ and $\sv$ as in the preceding section. 
We defined a free $\Z$-module $L_{R,\sv}$, on which the Weyl group $W_0(R)$ acts, with ordered basis
\begin{equation}\label{basis2}
\cB_{R,\sv}:=\left(\delta_{0},\delta_{1},\dots,\dots,\delta_{l},\,\delta_{\un{2}},\dots,\delta_{\un{l'}},\,\delta_{\unn{k+1}},\dots,\delta_{\unn{l''}}\right)\ .
\end{equation}
We recall that the ordering of the elements of $\cB_{R,\sv}$ is relevant
.  The action of the simple reflections $r_i$, $i\in\Ve_R$, on the basis elements is given explicitly in (\ref{act-W1})--(\ref{act-W5}).

\subsection{Sequences and truncated operators}\label{subsec-trunc}

We define $\Seq_{R,\sv}:=\bigl(\C(q)^{\times}\bigr)^{|\Ve_R|+1}$ and we label the elements of a sequence $\bS\in\Seq_{R,\sv}$ in the following way, in accordance with the labelling of elements of the ordered basis $\cB_{R,\sv}$:
\begin{equation}\label{string}
\bS=\bigl(s_{0},s_{1},\ldots,\ldots,s_{l},\,s_{\un{2}},\ldots,s_{\un{l'}},\,s_{\unn{k+1}},\ldots,s_{\unn{l''}}\bigr)\ .
\end{equation}
We will use the notation $\qc_j(\bS):=s_j$ for $j\in\Ve_R\cup\{0\}$. It will be sometimes visually convenient to write a sequence $\bS\in\Seq_{R,\sv}$ as an array as follows:
\begin{equation}\label{string-array}
\left(s_{0} \,,s_{1}\,,\begin{array}{ccccc}s_{2}\,, & \dots & \dots\,, & s_{k} \,, & \begin{array}{cccc}
s_{k+1} \,, & \dots & \dots\,, & s_{l}\\[0.2em]
s_{\unn{k+1}} \,, & \dots\,, & s_{\unn{l''}} &
\end{array}\\ 
s_{\un{2}} \,, & \dots\,, & s_{\un{l'}} & 
\end{array}\right)\ .
\end{equation}

\paragraph{\textbf{Truncated operators $\ovr_i$, $i\in\Ve_R$, on sequences.}} We recall from Section \ref{subsec-prel3} that the choice of the ordered basis $\cB_{R,\sv}$ of $L_{R,\sv}$ determines a bijection between the set of characters $\text{Hom}\bigl(L_{R,\sv},\C(q)^{\times}\bigr)$ and the set $\Seq_{R,\sv}$. The labelling of elements of a sequence $\bS\in\Seq_{R,\sv}$ was made such that the corresponding  character $\chi_S$ is given by
\[\chi_{\bS}(X^{\delta_j})=\qc_j(\bS)\,,\ \ \ \ \ \text{for $j\in\Ve_R\cup\{0\}$\,.}\]

The Weyl group $W_0(R)$ acts on $L_{R,\sv}$ and in turn on characters of $L_{R,\sv}\,$. Namely, for $\chi\in\text{Hom}\bigl(L_{R,\sv},\C(q)^{\times}\bigr)$ and $w\in W_0(R)$, the character $w(\chi)$ is given by
\[w(\chi)\bigl(X^{x}\bigr)=\chi\bigl(X^{w^{-1}(x)}\bigr)\ ,\ \ \ \ \ \text{for $x\in L_{R,\sv}$.}\]
\begin{definition}\label{def-ovr}
Let $\bS\in\Seq_{R,\sv}$ corresponding to the character $\chi_{\bS}$ of $L_{R,\sv}$.

\textbf{(i)} For $w\in W_0(R)$, we define $w(\bS)$ to be the sequence in $\Seq_{R,\sv}$ corresponding to the character $w(\chi_{\bS})$.

\textbf{(ii)} Let $\bze$ be a symbol. For $i\in\Ve_R$, we define the \emph{truncated operator} $\ovr_i$ on $\Seq_{R,\sv}\cup\{\bze\}$ by:
\begin{equation}\label{def-ovri}
\ovr_i(\bze):=\bze\ \ \ \qquad\text{and}\qquad\ \ \ \ovr_i(\bS):=\left\{\begin{array}{ll}
\bze & \text{if $\qc_i(\bS)=q^{\pm2}\qc_{i-1}(\bS)$\ ,}\\[0.4em]
r_i(\bS)\ \  & \text{otherwise.}
\end{array}\right.
\end{equation}
\end{definition}
We note that, for any $a_1,\dots,a_m\in\Ve_R$ and $\bS\in\Seq_{R,\sv}$, we have by definition:
\begin{equation}\label{r-ovr}
\ovr_{a_1}\dots \ovr_{a_m}(\bS)\neq\bze\ \ \ \quad\Longleftrightarrow\quad\ \ \ \ovr_{a_1}\dots \ovr_{a_m}(\bS)=r_{a_1}\dots r_{a_m}(\bS)\ .
\end{equation}

\subsection{Properties of truncated operators and truncated orbits.}

\begin{lemma}\label{lem-mon}
Let $i,j\in\Ve_R$ and $\bS\in\Seq_{R,\sv}$. We have
\begin{equation}\label{lem-cont}
\frac{\qc_i(\bS)}{\qc_{i-1}(\bS)}=\frac{\qc_i\bigl(r_j(\bS)\bigr)}{\qc_{i-1}\bigl(r_j(\bS)\bigr)}\ \ \ \ \ \ \qquad\text{if $m_{i,j}=2$\,,}
\end{equation}
\begin{equation}\label{lem-cont2}
\frac{\qc_i\bigl(r_j(\bS)\bigr)}{\qc_{i-1}\bigl(r_j(\bS)\bigr)}=\frac{\qc_j\bigl(r_i(\bS)\bigr)}{\qc_{j-1}\bigl(r_i(\bS)\bigr)}\ \ \ \ \ \quad\text{if $m_{i,j}=3$\,.}
\end{equation}  
\end{lemma}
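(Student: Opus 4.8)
The statement asserts two identities about how the ``local content'' $\qc_i(\bS)/\qc_{i-1}(\bS)$ behaves under the action of a simple reflection $r_j$, in the two cases $m_{i,j}=2$ (the vertices $i,j$ are not adjacent in the Dynkin diagram) and $m_{i,j}=3$ (they are adjacent). The plan is to reduce everything to the explicit formulas (\ref{act-W1})--(\ref{act-W5}) for the action of $r_j$ on the basis $\cB_{R,\sv}$, translated into the action on sequences via Definition \ref{def-ovr}(i): since $\chi_{\bS}(X^{\delta_m})=\qc_m(\bS)$ and $r_j(\chi_{\bS})(X^{\delta_m})=\chi_{\bS}(X^{r_j(\delta_m)})$, one has $\qc_m(r_j(\bS))=\prod_p \qc_p(\bS)^{c_p}$ whenever $r_j(\delta_m)=\sum_p c_p\delta_p$. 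So the whole lemma is a finite, if slightly tedious, bookkeeping exercise in the combinatorics of the basis $\cB_{R,\sv}$.

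First I would handle (\ref{lem-cont}): assume $m_{i,j}=2$, i.e. $i$ and $j$ are distinct non-adjacent vertices (and $i,j\neq$ each other, with $i\neq 0$). I need to show $r_j$ fixes both $\delta_i$ and $\delta_{i-1}$, or at worst multiplies their characters by the same factor, so that the ratio $\qc_i/\qc_{i-1}$ is unchanged. Looking at (\ref{act-W1})--(\ref{act-W5}): the reflection $r_j$ acts nontrivially on $\delta_m$ only when (a) $m\in\{j,j-1\}$ (from (\ref{act-W1})), or (b) one of the special ``branching'' situations in (\ref{act-W2})--(\ref{act-W4}) occurs, which all require $j$ and $m$ to lie on different branches meeting at the trivalent vertex (or at vertex $1$). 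In case $m_{i,j}=2$, the indices $i$ and $i-1$ are consecutive on a single path, while $j$ is not adjacent to $i$; one checks branch-by-branch using the partition (\ref{part}) and the conventions (\ref{notation}) that $r_j$ cannot touch both $\delta_i$ and $\delta_{i-1}$ in an asymmetric way — either it fixes both, or (in the branching cases) it multiplies both $\qc_i(\bS)$ and $\qc_{i-1}(\bS)$ by the same monomial $\qc_{\text{branch}}(\bS)\qc_1(\bS)^{-1}$ (resp.\ with $k$ in place of $1$), because that extra term $\delta_{\un b}-\delta_1$ etc.\ in (\ref{act-W2})--(\ref{act-W4}) is attached uniformly along the path. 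In all subcases the ratio is preserved; I would organize this as a short case analysis keyed to which of (\ref{act-W2})--(\ref{act-W4}) is relevant, noting that $m_{i,j}=2$ rules out the truly asymmetric case $\{i,i-1\}\cap\{j,j-1\}\neq\emptyset$.

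Next I would prove (\ref{lem-cont2}): assume $m_{i,j}=3$, i.e. $i$ and $j$ are adjacent. After possibly swapping $i\leftrightarrow j$ I may assume $j=i-1$ in the linear order along the relevant path (using the notational conventions (\ref{notation}) that make $\un b -1 = \un{b-1}$ etc.), so that $r_j=r_{i-1}$ acts by $\delta_{i-2}\leftrightarrow\delta_{i-1}$ (fixing $\delta_i$ up to the branching correction) and $r_i$ acts by $\delta_{i-1}\leftrightarrow\delta_i$. Then on the left side $\qc_i(r_j(\bS))/\qc_{i-1}(r_j(\bS)) = \qc_i(\bS)/\qc_{i-2}(\bS)$ and on the right side $\qc_j(r_i(\bS))/\qc_{j-1}(r_i(\bS)) = \qc_{i-1}(r_i(\bS))/\qc_{i-2}(r_i(\bS)) = \qc_i(\bS)/\qc_{i-2}(\bS)$, since $r_i$ fixes $\delta_{i-2}$ and sends $\delta_{i-1}\mapsto\delta_i$; these agree. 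I must also treat the branching-vertex cases where $i$ and $j$ are adjacent but not consecutive in the same path (e.g. $i=k+1$, $j=\unn{k+1}$, or $i=2$, $j=\un 2$, and likewise when $k=1$ with $\unn 2$): here I use the explicit formulas (\ref{act-W2})--(\ref{act-W4}) directly to compute both sides and check the resulting monomials in the $\qc_p(\bS)$ coincide. The main obstacle I anticipate is precisely keeping this case bookkeeping honest at the trivalent vertex — making sure the conventions $\un 1 = 1$, $\unn k = k$ and $\un b - 1 = \un{b-1}$ are applied correctly so that the ``off-branch'' reflections land on the right basis elements, and that the symmetric correction terms $+\delta_{\bullet}-\delta_{\bullet}$ in (\ref{act-W2})--(\ref{act-W4}) cancel in the ratios as claimed. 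Once the adjacency structure is sorted, each individual verification is a one-line monomial computation.
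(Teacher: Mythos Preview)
Your case-by-case approach via the explicit formulas (\ref{act-W1})--(\ref{act-W5}) is sound and would yield a correct proof, but the paper takes a much shorter route that you are missing. The key observation is the identity $\delta_i-\delta_{i-1}=\alpha_i$ from (\ref{form-basis-inv}), which gives immediately
\[
\frac{\qc_i(\bS)}{\qc_{i-1}(\bS)}=\chi_{\bS}(X^{\alpha_i})\,.
\]
Both statements then follow in one line from standard root-system facts: if $m_{i,j}=2$ then $r_j(\alpha_i)=\alpha_i$, and if $m_{i,j}=3$ then $r_j(\alpha_i)=\alpha_i+\alpha_j=r_i(\alpha_j)$. No branching bookkeeping is needed. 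What your approach buys is concreteness (everything is expressed directly in the $\qc_p(\bS)$), at the cost of a nontrivial case analysis that the paper avoids entirely by passing through the simple roots.

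One small correction to your plan: in your treatment of (\ref{lem-cont2}) you list $(i,j)=(k+1,\unn{k+1})$ and $(i,j)=(2,\un{2})$ as adjacent pairs requiring separate treatment, but these pairs are \emph{not} adjacent in the Dynkin diagram --- they share the common neighbour $k$ (respectively $1$) but have $m_{i,j}=2$. In the labelling of Section~\ref{subsec-def-R1}, every edge is of the form $(i,i-1)$ for some $i\in\Ve_R$ under the conventions (\ref{notation}), so after swapping you may always assume $j=i-1$; the ``adjacent but not consecutive'' cases you anticipate are vacuous.
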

\begin{proof} Let $i,j\in\Ve_R$ and let $\bS\in\Seq_{R,\sv}$. Let $\chi_{\bS}$ be the character corresponding to $\bS$. We recall that $\delta_i-\delta_{i-1}=\alpha_i$. Therefore, by definition, we have
\[\frac{\qc_i(\bS)}{\qc_{i-1}(\bS)}=\chi_{\bS}(X^{\alpha_i})\ .\]
The lemma then follows from the fact that $r_j(\alpha_i)=\alpha_i$ if $m_{i,j}=2$ and $r_j(\alpha_i)=r_i(\alpha_j)$ if $m_{i,j}=3$.
\end{proof}

\begin{proposition}\label{prop-mon} Let $i,j\in\Ve_R$ and $\bS\in\Seq_{R,\sv}$. We have:
\begin{itemize}
\item[\rm{(i)}] if $m_{i,j}=2$: \qquad $\ovr_i\ovr_j(\bS)\neq\bze$\ \quad$\Longleftrightarrow$\quad\ $\bze\notin\{\ovr_j(\bS),\,\ovr_i(\bS)\}$\,;
\vskip .2cm
\item[\rm{(ii)}]  if $m_{i,j}=3$: \qquad $\ovr_i\ovr_j\ovr_i(\bS)\neq\bze$\ \quad $\Longleftrightarrow$\quad\ $\bze\notin\{\ovr_j(\bS),\,\ovr_i(\bS),\,\ovr_i\ovr_j(\bS),\,\ovr_j\ovr_i(\bS)\}$\,.
\end{itemize}
\end{proposition}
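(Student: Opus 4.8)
The plan is to reduce everything to elementary Boolean manipulations, once the condition for $\ovr_i$ to produce $\bze$ is reformulated. First I would record that for any $\bT\in\Seq_{R,\sv}$ one has $\qc_i(\bT)/\qc_{i-1}(\bT)=\chi_{\bT}(X^{\alpha_i})$, because $\delta_i-\delta_{i-1}=\alpha_i$; hence by Definition \ref{def-ovr} one has $\ovr_i(\bT)=\bze$ if and only if $\chi_{\bT}(X^{\alpha_i})=q^{\pm2}$, a condition I will abbreviate by saying that $\bT$ is \emph{$i$-singular}. Next I would unfold the compositions in the statement: from the definition of the truncated operators, together with (\ref{r-ovr}) applied to each tail of the word, a composition $\ovr_{a_1}\cdots\ovr_{a_m}(\bS)$ is different from $\bze$ precisely when, for every $s$, the intermediate sequence $r_{a_{s+1}}\cdots r_{a_m}(\bS)$ is not $a_s$-singular (ordinary reflections appear here, since each intermediate result is then a genuine sequence and not $\bze$). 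This rewrites both sides of (i) and of (ii) as finite conjunctions of conditions of the form ``one of $\bS,\,r_i(\bS),\,r_j(\bS),\,r_jr_i(\bS)$ is not $\ast$-singular''.

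For (i), with $m_{i,j}=2$, the left-hand side unfolds to ``$\bS$ is not $j$-singular and $r_j(\bS)$ is not $i$-singular''. Formula (\ref{lem-cont}) of Lemma \ref{lem-mon} gives $\chi_{r_j(\bS)}(X^{\alpha_i})=\chi_{\bS}(X^{\alpha_i})$, so $r_j(\bS)$ is $i$-singular exactly when $\bS$ is. Hence the left-hand side becomes ``$\bS$ is not $j$-singular and $\bS$ is not $i$-singular'', which is exactly the right-hand side $\bze\notin\{\ovr_j(\bS),\ovr_i(\bS)\}$.

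For (ii), with $m_{i,j}=3$, the essential tool is formula (\ref{lem-cont2}) of Lemma \ref{lem-mon}, which reads $\chi_{r_j(\bT)}(X^{\alpha_i})=\chi_{r_i(\bT)}(X^{\alpha_j})$ for every $\bT$. Taking $\bT=\bS$ shows ``$r_j(\bS)$ is $i$-singular $\iff$ $r_i(\bS)$ is $j$-singular''; taking $\bT=r_i(\bS)$ and using $r_i^2=\mathrm{id}$ shows ``$r_jr_i(\bS)$ is $i$-singular $\iff$ $\bS$ is $j$-singular''. With these two equivalences in hand, the left-hand side of (ii) unfolds to the conditions ``$\bS$ not $i$-singular, $r_i(\bS)$ not $j$-singular, $r_jr_i(\bS)$ not $i$-singular'', which collapse to ``$\bS$ not $i$-singular, $r_i(\bS)$ not $j$-singular, $\bS$ not $j$-singular''; and the right-hand side unfolds to ``$\bS$ not $j$-singular, $\bS$ not $i$-singular, $r_j(\bS)$ not $i$-singular, $r_i(\bS)$ not $j$-singular'', which (using the first equivalence) collapses to the very same three conditions. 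Hence the two sides coincide.

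The underlying computations are trivial — Lemma \ref{lem-mon} has already done the work — so the only real care needed is bookkeeping: unfolding the truncated words in the correct order (invoking (\ref{r-ovr}) at each step so that $\ovr$'s become ordinary reflections) and selecting the substitution $\bT=r_i(\bS)$ in (\ref{lem-cont2}) so as to obtain the ``two-step'' equivalence for $r_jr_i(\bS)$. One may also note that the hypotheses $m_{i,j}\in\{2,3\}$ force $i\neq j$, so there is no degenerate case to treat separately.
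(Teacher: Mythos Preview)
Your proof is correct and follows essentially the same approach as the paper. Both arguments unfold the truncated compositions into conjunctions of non-singularity conditions and reduce them via the two identities in Lemma~\ref{lem-mon}; your ``$i$-singular'' terminology and explicit Boolean bookkeeping make the reduction slightly more transparent, but the substitutions you make in (\ref{lem-cont2}) (namely $\bT=\bS$ and $\bT=r_i(\bS)$) are exactly those used, implicitly, in the paper's proof.
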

\begin{proof} \rm{(i)} Assume $m_{i,j}=2$. From the definitions, we have $\ovr_i\ovr_j(\bS)\neq\bze$ if and only if $\ovr_j(\bS)\neq\bze$ and $\displaystyle\frac{\qc_i\bigl(r_j(\bS)\bigr)}{\qc_{i-1}\bigl(r_j(\bS)\bigr)}\neq q^{\pm2}$. The assertion of item (i) follows then from (\ref{lem-cont}).

\vskip .2cm
\rm{(ii)} Assume $m_{i,j}=3$. From the definitions, we have $\ovr_i\ovr_j\ovr_i(\bS)\neq\bze$ if and only if 
\[\ovr_i(\bS)\neq\bze\ \ \text{and}\ \ \ovr_j\ovr_i(\bS)\neq\bze\ \ \text{and}\ \ \displaystyle\frac{\qc_i\bigl(r_jr_i(\bS)\bigr)}{\qc_{i-1}\bigl(r_jr_i(\bS)\bigr)}\neq q^{\pm2}\ .\]
From (\ref{lem-cont2}), the last condition is equivalent to $\ovr_j(\bS)\neq\bze$. Moreover, it also follows from (\ref{lem-cont2}) that, if $\ovr_i(\bS)\neq\bze$ and $\ovr_j(\bS)\neq\bze$, then $\ovr_j\ovr_i(\bS)\neq\bze\ \Leftrightarrow\ \ovr_i\ovr_j(\bS)\neq\bze$.
\end{proof}

As a consequence of the proposition, we obtain that the braid relations are satisfied by the truncated operators.
\begin{corollary}\label{cor-mon}
The operators $\ovr_i$, $i\in\Ve_R$, on the set $\Seq_{R,\sv}\cup\{\bze\}$ satisfy:
\begin{equation}\label{rel-mon}
\underbrace{\ovr_i\ovr_j\ovr_i\dots}_{m_{ij}\,terms}=\underbrace{\ovr_j\ovr_i\ovr_j\dots}_{m_{ij}\,terms}\quad\ \ \ \ \text{for $i,j\in\Ve_R$ with $i\neq j$}\,.
\end{equation} 
\end{corollary}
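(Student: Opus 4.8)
The plan is to derive the braid relations for the truncated operators directly from Proposition \ref{prop-mon}, by a case analysis on whether the relevant intermediate applications produce the symbol $\bze$ or not. Recall that $W_0(R)$ is simply-laced, so $m_{ij}\in\{2,3\}$ for $i\neq j$; thus only two relations need checking.

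First I would treat the commuting case $m_{ij}=2$. I want $\ovr_i\ovr_j(\bS)=\ovr_j\ovr_i(\bS)$ for all $\bS\in\Seq_{R,\sv}\cup\{\bze\}$. If $\bS=\bze$ both sides are $\bze$, so assume $\bS\in\Seq_{R,\sv}$. By Proposition \ref{prop-mon}(i), $\ovr_i\ovr_j(\bS)\neq\bze$ if and only if $\bze\notin\{\ovr_i(\bS),\ovr_j(\bS)\}$, and this condition is manifestly symmetric in $i$ and $j$; hence $\ovr_i\ovr_j(\bS)=\bze\Leftrightarrow\ovr_j\ovr_i(\bS)=\bze$. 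When both are nonzero, \eqref{r-ovr} gives $\ovr_i\ovr_j(\bS)=r_ir_j(\bS)$ and $\ovr_j\ovr_i(\bS)=r_jr_i(\bS)$, and these agree because the untruncated operators $r_i$ genuinely realise the $W_0(R)$-action on characters, where $r_ir_j=r_jr_i$ since $m_{ij}=2$. So both sides coincide.

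Next I would treat $m_{ij}=3$, where I must show $\ovr_i\ovr_j\ovr_i(\bS)=\ovr_j\ovr_i\ovr_j(\bS)$. Again the case $\bS=\bze$ is trivial, so take $\bS\in\Seq_{R,\sv}$. By Proposition \ref{prop-mon}(ii), the left-hand side is nonzero iff $\bze\notin\{\ovr_i(\bS),\ovr_j(\bS),\ovr_i\ovr_j(\bS),\ovr_j\ovr_i(\bS)\}$; by the symmetry of that set in $i\leftrightarrow j$ (using the equivalence $\ovr_j\ovr_i(\bS)\neq\bze\Leftrightarrow\ovr_i\ovr_j(\bS)\neq\bze$ established inside the proof of the proposition, valid once $\ovr_i(\bS),\ovr_j(\bS)\neq\bze$), the right-hand side is nonzero under exactly the same condition. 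So both sides are simultaneously $\bze$, or simultaneously nonzero; in the latter case \eqref{r-ovr} reduces them to $r_ir_jr_i(\bS)$ and $r_jr_ir_j(\bS)$ respectively, which are equal because the $r_i$ satisfy the braid relations of $W_0(R)$.

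I expect the only genuine subtlety — and the reason Lemma \ref{lem-mon} and Proposition \ref{prop-mon} were proved first — to be the verification that the ``non-$\bze$" conditions for the two sides really do match. The naive conditions ($\bze\notin\{\ovr_i(\bS),\ovr_i\ovr_j(\bS),\ldots\}$ read off one word versus the other) are not literally identical term by term; they only coincide after invoking the identity $\qc_i(\bS)/\qc_{i-1}(\bS)=\chi_{\bS}(X^{\alpha_i})$ and the root-system facts $r_j(\alpha_i)=\alpha_i$ (resp. $r_j(\alpha_i)=r_i(\alpha_j)$) packaged in Lemma \ref{lem-mon}. Once Proposition \ref{prop-mon} is granted, though, the symmetry is visible directly from the symmetric form of its right-hand side, so the corollary follows with essentially no further computation.

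\begin{proof}
Since $R$ is simply-laced, $m_{ij}\in\{2,3\}$ for $i\neq j$. Both sides of \eqref{rel-mon} send $\bze$ to $\bze$, so fix $\bS\in\Seq_{R,\sv}$.

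If $m_{ij}=2$, then by Proposition \ref{prop-mon}(i) the condition $\ovr_i\ovr_j(\bS)\neq\bze$ reads $\bze\notin\{\ovr_i(\bS),\ovr_j(\bS)\}$, which is symmetric in $i$ and $j$; hence $\ovr_i\ovr_j(\bS)=\bze$ if and only if $\ovr_j\ovr_i(\bS)=\bze$. If both are nonzero, then by \eqref{r-ovr} we get $\ovr_i\ovr_j(\bS)=r_ir_j(\bS)$ and $\ovr_j\ovr_i(\bS)=r_jr_i(\bS)$, and these are equal because $r_ir_j=r_jr_i$ in $W_0(R)$.

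If $m_{ij}=3$, then by Proposition \ref{prop-mon}(ii) we have $\ovr_i\ovr_j\ovr_i(\bS)\neq\bze$ if and only if $\bze\notin\{\ovr_j(\bS),\ovr_i(\bS),\ovr_i\ovr_j(\bS),\ovr_j\ovr_i(\bS)\}$, a condition symmetric in $i$ and $j$; so $\ovr_i\ovr_j\ovr_i(\bS)=\bze$ if and only if $\ovr_j\ovr_i\ovr_j(\bS)=\bze$. If both are nonzero, then \eqref{r-ovr} gives $\ovr_i\ovr_j\ovr_i(\bS)=r_ir_jr_i(\bS)$ and $\ovr_j\ovr_i\ovr_j(\bS)=r_jr_ir_j(\bS)$, which coincide because $r_ir_jr_i=r_jr_ir_j$ in $W_0(R)$. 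This proves \eqref{rel-mon}.
\end{proof}
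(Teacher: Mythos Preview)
Your proof is correct and follows essentially the same approach as the paper: reduce via \eqref{r-ovr} to checking that both sides are $\bze$ simultaneously, and read this off from the manifestly $i\leftrightarrow j$ symmetric conditions in Proposition~\ref{prop-mon}. The paper's version is simply a more compressed statement of the same argument.
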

\begin{proof}
By definition, the braid relations are satisfied by the operators $r_i$, $i\in\Ve_R$, on $\Seq_{R,\sv}$. Due to the remark (\ref{r-ovr}) after Definition \ref{def-ovr}, it remains to check that, for any $i,j\in\Ve_R$ with $i\neq j$, we have
\begin{equation}\label{eq-lem-mon1}\underbrace{\ovr_i\ovr_j\ovr_i\dots}_{m_{ij}\,terms}(\bS)=\bze\ \quad\Leftrightarrow\quad\ \underbrace{\ovr_j\ovr_i\ovr_j\dots}_{m_{ij}\,terms}(\bS)=\bze\ .
\end{equation}
This is an immediate consequence of Proposition \ref{prop-mon}.
\end{proof}

\paragraph{\textbf{Truncated orbits of sequences.}} 

For any element $w\in W_0(R)$ in the Weyl group, let $w=r_{a_1}\dots r_{a_m}$ be a reduced expression for $w$ in terms of the simple reflections $r_i$, $i\in\Ve_R$. We then define 
\[\overline{w}:=\ovr_{a_1}\dots \ovr_{a_m}\ .\]
Due to Corollary \ref{cor-mon}, the element $\overline{w}$ does not depend on the reduced expression chosen for $w$. By convention $\overline{w}:=\text{Id}_{\Seq_{R,\sv}\cup\{\bze\}}$ if $w=1$.
\begin{definition}\label{def-oO}
For $\bS\in\Seq_{R,\sv}$, we define
\[\oO_{\bS}:=\left\{\bS'\in\Seq_{R,\sv}\ |\ \bS'=\overline{w}(\bS)\ \ \ \text{for $w\in W_0(R)$}\,\right\}\,,\]
and we call $\oO_{\bS}$ the \emph{truncated orbit} of $\bS$.
\end{definition}
Note that by definition $\bS\in\oO_{\bS}$ and $\bze\notin\oO_{\bS}$. Moreover, we have, for $\bS\in\Seq_{R,\sv}$ and $w\in W_0(R)$, 
\[\overline{w}(\bS)\neq\bze\ \ \quad\Longleftrightarrow\quad\ \ \overline{w^{-1}}\bigl(\overline{w}(\bS)\bigr)=\bS\ ;\]
this is immediate from Definition \ref{def-ovr} if $w$ is a simple reflection, and in general, it follows at once by induction on the length of $w$. As a direct consequence, for $\bS,\bS'\in\Seq_{R,\sv}$, we have either $\oO_{\bS}=\oO_{\bS'}$ or $\oO_{\bS}\cap\oO_{\bS'}=\emptyset$.

\begin{remarks}\label{rem-orb}
\textbf{(i)} For $\bS\in\Seq_{R,\sv}$ the truncated orbit $\oO_{\bS}$ is a subset of the orbit of $\bS$ under the action (not truncated) of $W_0(R)$. More precisely, the $W_0(R)$-orbit of $\bS$ is partitioned into truncated orbits. In particular, the truncated orbits are finite sets, namely, $|\oO_{\bS}|\leq |W_0(R)|$ for any $\bS\in\Seq_{R,\sv}$.

\textbf{(ii)} The truncated orbit $\oO_{\bS}$ of a sequence $\bS\in\Seq_{R,\sv}$ as defined in Definition \ref{def-oO} coincides with the set of all $\bS'\in\Seq_{R,\sv}$ which can be obtained by repeated applications of the truncated operators $\ovr_i$, $i\in\Ve_R$. In other words, an alternative definition of $\oO_{\bS}$ could be
\[\oO_{\bS}=\left\{\bS'\in\Seq_{R,\sv}\ |\ \bS'=\ovr_{a_1}\dots \ovr_{a_m}(\bS)\ \ \text{for $m\geq 0$ and $a_1,\dots,a_m\in\Ve_R$}\,\right\}\,.\]
This is a direct consequence of Corollary \ref{cor-mon} together with the fact that if $\ovr_i\bigl(\ovr_i(\bS)\bigr)\neq\bze$ then $\ovr_i\bigl(\ovr_i(\bS)\bigr)=\bS$.
\hfill$\triangle$
\end{remarks}

\subsection{Explicit formulas for the action of $W_0(R)$ on sequences.}\label{subsec-act-exp}

The action of the simple reflections $r_i$, $i\in\Ve_R$, of the Weyl group $W_0(R)$ on a sequence $\bS$ can be read off directly from Formulas (\ref{act-W1})--(\ref{act-W5}). For complete explicitness and due to their central role in the rest of the paper, we write down the formulas.

Let $\bS\in\Seq_{R,\sv}$ written as an array as in (\ref{string-array}) and let $i\in\Ve_R$. The sequence $r_i(\bS)$ is given by the following procedure:
\begin{itemize}
\item First, in $\bS$, we exchange $s_i$ and $s_{i-1}$. The resulting sequence is $r_i(\bS)$ whenever $i$ is not one of the following label: $2,\ \un{2},\ k+1,\ \unn{k+1}\ $.
\item Now, if $i\in\{2,\ \un{2},\ k+1,\ \unn{k+1}\}$, it means that the sequence, in the array representation (\ref{string-array}), ``ramifies" after $s_{i-1}$ (we read a sequence from left to right). Then the rule to calculate $r_i(\bS)$ is as follows. We still exchange $s_i$ and $s_{i-1}$. Then after the ramification, in the branch which contained $s_i$ nothing more happens, while all coefficients in the other branches, after the ramification, are multiplied by $\displaystyle\frac{s_i}{s_{i-1}}$.
\end{itemize}

We show explicitly the outcome of the above procedure, when $i\in\{2,\ \un{2},\ k+1,\ \unn{k+1}\}$. For clarity, we consider first the situation $k>1$. Then we have:
\begin{equation}\label{act-seq1}
r_2(\bS)=\left(s_{0} \,,s_{2}\,,\hspace{-0.2cm}\begin{array}{cclcc}s_{1}\,, & \dots & \dots\,, & \hspace{-0.1cm}s_{k} \,, & \!\!\!\!\begin{array}{cclc}
 s_{k+1} \,, & \dots & \dots\,, & \hspace{-0.2cm}s_{l}\\[0.8em]
s_{\unn{k+1}} \,, & \dots\,, & s_{\unn{l''}} &
\end{array}\\ 
\displaystyle\frac{s_{2}}{s_{1}}s_{\un{2}} \,, & \dots\,, & \displaystyle\frac{s_{2}}{s_{1}}s_{\un{l'}} & 
\end{array}\right)\ ,
\end{equation}
\begin{equation}\label{act-seq2}
r_{\un{2}}(\bS)=\left(s_{0} \,,s_{\un{2}}\,,\hspace{-0cm}\begin{array}{cclcc}\displaystyle\frac{s_{\un{2}}}{s_{1}}s_{2}\,, & \dots & \dots\,, & \hspace{-0.1cm}\displaystyle\frac{s_{\un{2}}}{s_{1}}s_{k} \,, & \!\!\!\!\begin{array}{cclc}
\displaystyle\frac{s_{\un{2}}}{s_{1}} s_{k+1} \,, & \dots & \dots\,, & \hspace{-0.2cm}\displaystyle\frac{s_{\un{2}}}{s_{1}}s_{l}\\[1.4em]
\displaystyle\frac{s_{\un{2}}}{s_{1}}s_{\unn{k+1}} \,, & \dots\,, & \displaystyle\frac{s_{\un{2}}}{s_{1}}s_{\unn{l''}} &
\end{array}\\ 
s_{1} \,, & \dots\,, & s_{\un{l'}} & 
\end{array}\right)\ ,
\end{equation}
\begin{equation}\label{act-seq3}
r_{k+1}(\bS)=\left(s_{0} \,,s_{1}\,,\hspace{-0.2cm}\begin{array}{cclcc}s_{2}\,, & \dots & \dots\,, & \hspace{-0.1cm}s_{k+1} \,, & \!\!\!\!\begin{array}{cccc}
 \hspace{-0.2cm}s_{k} \,, & \dots & \dots\,, & \hspace{-0.1cm}s_{l}\\[0.8em]
\displaystyle\frac{s_{k+1}}{s_{k}}s_{\unn{k+1}} \,, & \dots\,, & \displaystyle\frac{s_{k+1}}{s_{k}}s_{\unn{l''}} &
\end{array}\\ 
s_{\un{2}} \,, & \dots\,, & s_{\un{l'}} & 
\end{array}\right)\ ,
\end{equation}
and $r_{\unn{k+1}}(\bS)$ is obtained similarly to (\ref{act-seq3}), by exchanging the role of $\{k+1,\dots,l\}$ and $\{\unn{k+1},\dots,\unn{l''}\}$.

If $k=1$ the situation is more symmetrical. We have :
\begin{equation}\label{act-seq4}
r_2(\bS)=\left(s_{0} \,,s_{2}\,,\begin{array}{ccccc}\hspace{-0.1cm}s_{1}\,, & \dots & \dots & \dots\,, & \hspace{-0.1cm}s_{l} \\[1em]
\displaystyle\frac{s_{2}}{s_{1}}s_{\unn{2}} \,, & \dots\,, & \displaystyle\frac{s_{2}}{s_{1}}s_{\unn{l''}} \\[1em]
\displaystyle\frac{s_{2}}{s_{1}}s_{\un{2}} \,, & \dots & \dots\,, & \displaystyle\frac{s_{2}}{s_{1}}s_{\un{l'}} & 
\end{array}\right)\ ,
\end{equation}
and $r_{\un{2}}(\bS)$, $r_{\unn{2}}(\bS)$ are obtained similarly, by permuting the role of $\{2,\dots,l\}$, $\{\un{2},\dots,\un{l'}\}$ and $\{\unn{2},\dots,\unn{l''}\}$.

\section{Classical placed skew shapes and tableaux}\label{sec-tab-cla}

In this section, let $N\in\Z_{>0}$. We define the combinatorial notion of placed skew shapes and their associated tableaux. We note that we use the short terminology ``tableaux (of size $N$)" for tableaux associated to placed skew shapes (of size $N$).  

\subsection{Definitions.}\label{subsec-PSS}

\paragraph{\textbf{Skew diagrams.}}
Let $\lambda\vdash N$ be a partition of $N$, that is, $\lambda=(\lambda_1,\dots,\lambda_l)$ is a family of  integers such that $\lambda_1\geq\lambda_2\geq\dots\geq\lambda_l>0$ and $\lambda_1+\dots+\lambda_l=N$. We say that $\lambda$ is a partition {\em of size} $N$ and set $|\lambda|:=N$. 

 A pair $(x,y)\in\mathbb{Z}^2$ is called a {\em node}. For a node $\theta=(x,y)$, the classical content of $\theta$ is denoted by $\cc(\theta)$ and is defined by $\cc(\theta):=y-x$\,.

The (Young) diagram of $\lambda=(\lambda_1,\dots,\lambda_l)$ is the set of nodes $(x,y)$ such that $x\in\{1,\dots,l\}$ and $y\in\{1,\dots,\lambda_x\}$. The diagram of $\lambda$ will be represented in the plan by a left-justified array of $l$ rows such that the $j$-th row contains $\lambda_j$ nodes for all $j=1,\dots,l$ (a node will be pictured by an empty box). We number the rows from top to bottom. We identify partitions with their diagrams and say that $(x,y)$ is a node of $\lambda$, or $(x,y)\in\lambda$, if $(x,y)$ is a node of the diagram of $\lambda$.

A skew partition consists of two partitions $\mu,\lambda$ such that, as sets of nodes, $\mu\subset\lambda$. We denote it by $\lambda/\mu$. The skew diagram of $\lambda/\mu$ consists of the sets of nodes which are in $\lambda$ and not in $\mu$. For example,
\[\begin{array}{cccc}
 \scriptstyle{\times} & \hspace{-0.35cm}\cdot & \hspace{-0.35cm}\cdot & \hspace{-0.35cm}\fbox{\phantom{\scriptsize{$2$}}} \\[-0.2em]
\cdot & \hspace{-0.35cm}\cdot & \hspace{-0.35cm}\fbox{\phantom{\scriptsize{$2$}}} & \hspace{-0.35cm}\fbox{\phantom{\scriptsize{$2$}}}\\[-0.2em]
\fbox{\phantom{\scriptsize{$2$}}} &\hspace{-0.35cm}\fbox{\phantom{\scriptsize{$2$}}} & &\\[-0.2em]
\fbox{\phantom{\scriptsize{$2$}}} &\hspace{-0.35cm}\fbox{\phantom{\scriptsize{$2$}}} & &
\end{array}\]
is the skew diagram corresponding to $\lambda/\mu$ with $\lambda=(4,4,2,2)$ and $\mu=(3,2)$. We sometimes put dots at some empty positions and we mark with $\times$ the point in $\Z^2$ with coordinate $(1,1)$ (we omit the symbol $\times$ only when $\mu$ is empty and when the top left box is in position $(1,1)$, which corresponds to a usual diagram). It will be convenient to also call a skew diagram any set of nodes (possibly with non-positive coordinates) which is the translated of a skew diagram along the diagonal. The size $|S|$ of a skew diagram $S$ is the number of nodes in $S$.

The connected components of a skew diagram $S$ are the minimal subsets of nodes $C_1,\dots,C_a\subset S$ such that $S=C_1\sqcup\dots\sqcup C_a$ (disjoint union) and two nodes $\theta_i$ and $\theta_j$ do not lie in the same diagonal nor in adjacent diagonals if $\theta_i\in C_i$ and $\theta_j\in C_j$ with $i\neq j$. In the example above, the skew diagram consists of the two connected components $\begin{array}{ll}
  & \hspace{-0.45cm}\Box \\[-0.7em]
\Box & \hspace{-0.45cm}\Box
\end{array} $ and $\begin{array}{ll}
 \Box & \hspace{-0.45cm}\Box \\[-0.7em]
\Box & \hspace{-0.45cm}\Box
\end{array} $.

We say that two skew diagrams are equivalent if one is obtained from the other by translating some connected components along the diagonal. For example, we have
\[\begin{array}{cccc}
 \scriptstyle{\times} & \hspace{-0.35cm}\cdot & \hspace{-0.35cm}\cdot & \hspace{-0.35cm}\fbox{\phantom{\scriptsize{$2$}}} \\[-0.2em]
\cdot & \hspace{-0.35cm}\cdot & \hspace{-0.35cm}\fbox{\phantom{\scriptsize{$2$}}} & \hspace{-0.35cm}\fbox{\phantom{\scriptsize{$2$}}}\\[-0.2em]
\fbox{\phantom{\scriptsize{$2$}}} &\hspace{-0.35cm}\fbox{\phantom{\scriptsize{$2$}}} & &\\[-0.2em]
\fbox{\phantom{\scriptsize{$2$}}} &\hspace{-0.35cm}\fbox{\phantom{\scriptsize{$2$}}} & &
\end{array}
\ \ \sim\ \ 
\begin{array}{ccccc}
 \scriptstyle{\times} & \hspace{-0.25cm}\cdot & \hspace{-0.35cm}\cdot & \hspace{-0.35cm}\cdot & \hspace{-0.35cm}\cdot \\[-0.2em]
\cdot & \hspace{-0.25cm}\cdot & \hspace{-0.35cm}\cdot & \hspace{-0.35cm}\cdot & \hspace{-0.35cm}\fbox{\phantom{\scriptsize{$2$}}} \\[-0.2em]
\cdot &  \hspace{-0.25cm}\cdot & \hspace{-0.35cm}\cdot & \hspace{-0.35cm}\fbox{\phantom{\scriptsize{$2$}}} & \hspace{-0.35cm}\fbox{\phantom{\scriptsize{$2$}}}\\[-0.2em]
\cdot &  \hspace{-0.25cm}\fbox{\phantom{\scriptsize{$2$}}} &\hspace{-0.35cm}\fbox{\phantom{\scriptsize{$2$}}} & &\\[-0.2em]
\cdot &  \hspace{-0.25cm}\fbox{\phantom{\scriptsize{$2$}}} &\hspace{-0.35cm}\fbox{\phantom{\scriptsize{$2$}}} & &
\end{array}
\ \ \sim\ \ 
\begin{array}{cccc}
 & \hspace{-0.35cm} & & \hspace{-0.35cm}\fbox{\phantom{\scriptsize{$2$}}} \\[-0.2em]
 & \hspace{-0.35cm}\scriptstyle{\times} & \hspace{-0.35cm}\fbox{\phantom{\scriptsize{$2$}}} & \hspace{-0.35cm}\fbox{\phantom{\scriptsize{$2$}}}\\[-0.2em]
\fbox{\phantom{\scriptsize{$2$}}} &\hspace{-0.35cm}\fbox{\phantom{\scriptsize{$2$}}} & &\\[-0.2em]
\fbox{\phantom{\scriptsize{$2$}}} &\hspace{-0.35cm}\fbox{\phantom{\scriptsize{$2$}}} & &
\end{array}\ \ \ \ ,
\]
and
\[\begin{array}{ccccc}
 \scriptstyle{\times} & \hspace{-0.35cm}\cdot & \hspace{-0.35cm}\cdot &\hspace{-0.35cm}\cdot & \hspace{-0.35cm}\fbox{\phantom{\scriptsize{$2$}}} \\[-0.2em]
\cdot & \hspace{-0.35cm}\cdot & \hspace{-0.35cm}\cdot &\hspace{-0.35cm}\fbox{\phantom{\scriptsize{$2$}}} & \hspace{-0.35cm}\fbox{\phantom{\scriptsize{$2$}}}\\[-0.2em]
\cdot & \hspace{-0.35cm}\cdot & \hspace{-0.35cm}\cdot & &\\[-0.2em]
\fbox{\phantom{\scriptsize{$2$}}} &\hspace{-0.35cm}\fbox{\phantom{\scriptsize{$2$}}} & & &\\[-0.2em]
\fbox{\phantom{\scriptsize{$2$}}} &\hspace{-0.35cm}\fbox{\phantom{\scriptsize{$2$}}} & & &
\end{array}
\ \ \sim\ \ 
\begin{array}{cccc}
& &  & \hspace{-0.35cm}\fbox{\phantom{\scriptsize{$2$}}} \\[-0.2em]
\scriptstyle{\times} & \hspace{-0.35cm}\cdot & \hspace{-0.35cm}\fbox{\phantom{\scriptsize{$2$}}} & \hspace{-0.35cm}\fbox{\phantom{\scriptsize{$2$}}} \\[-0.2em]
\cdot & \hspace{-0.35cm}\cdot & &\\[-0.2em]
\cdot & \hspace{-0.35cm}\cdot & &\\[-0.2em]
\fbox{\phantom{\scriptsize{$2$}}} &\hspace{-0.35cm}\fbox{\phantom{\scriptsize{$2$}}} & &\\[-0.2em]
\fbox{\phantom{\scriptsize{$2$}}} &\hspace{-0.35cm}\fbox{\phantom{\scriptsize{$2$}}} & &
\end{array}
\ \ \sim\ \ 
\begin{array}{cccccc}
        &\hspace{-0.35cm}\scriptstyle{\times} & \hspace{-0.35cm}\cdot & \hspace{-0.35cm}\cdot &\hspace{-0.35cm}\cdot & \hspace{-0.35cm}\fbox{\phantom{\scriptsize{$2$}}} \\[-0.2em]
       &\hspace{-0.35cm}\cdot & \hspace{-0.35cm}\cdot & \hspace{-0.35cm}\cdot &\hspace{-0.35cm}\fbox{\phantom{\scriptsize{$2$}}} & \hspace{-0.35cm}\fbox{\phantom{\scriptsize{$2$}}}\\[-0.2em]
\fbox{\phantom{\scriptsize{$2$}}} &\hspace{-0.35cm}\fbox{\phantom{\scriptsize{$2$}}} & & & &\\[-0.2em]
\fbox{\phantom{\scriptsize{$2$}}} &\hspace{-0.35cm}\fbox{\phantom{\scriptsize{$2$}}} & & & &
\end{array}
\ \ \sim\ \ 
\begin{array}{ccccc}
 \scriptstyle{\times} & \hspace{-0.25cm}\cdot & \hspace{-0.35cm}\cdot &\hspace{-0.35cm}\cdot & \hspace{-0.35cm}\fbox{\phantom{\scriptsize{$2$}}} \\[-0.2em]
\cdot & \hspace{-0.25cm}\cdot & \hspace{-0.35cm}\cdot &\hspace{-0.35cm}\fbox{\phantom{\scriptsize{$2$}}} & \hspace{-0.35cm}\fbox{\phantom{\scriptsize{$2$}}}\\[-0.2em]
\cdot & \hspace{-0.25cm}\cdot & \hspace{-0.35cm}\cdot & &\\[-0.2em]
\cdot & \hspace{-0.25cm}\cdot & \hspace{-0.35cm}\cdot & &\\[-0.2em]
\cdot & \hspace{-0.25cm}\fbox{\phantom{\scriptsize{$2$}}} &\hspace{-0.35cm}\fbox{\phantom{\scriptsize{$2$}}} & &\\[-0.2em]
\cdot & \hspace{-0.25cm}\fbox{\phantom{\scriptsize{$2$}}} &\hspace{-0.35cm}\fbox{\phantom{\scriptsize{$2$}}} & &
\end{array}\ \ \ \ \ .
\]

\paragraph{\textbf{Placed skew shapes.}} Let $q$ be an indeterminate. 

A placed skew diagram $\bGamma$ consists of an integer $d\in\Z_{>0}$, a $d$-tuple of non-empty skew diagrams $\Gamma_1,\dots,\Gamma_d$ and a $d$-tuple of elements $\gamma_1,\dots,\gamma_d\in\C(q)\backslash\{0\}$ such that $\gamma_i\neq q^{2a}\gamma_j$ for any $i\neq j$ and $a\in\Z$. We write $\bGamma=\bigl\{(\Gamma_1,\gamma_1),\dots,(\Gamma_d,\gamma_d)\bigr\}$.

A placed skew diagram $\bGamma=\bigl\{(\Gamma_1,\gamma_1),\dots,(\Gamma_d,\gamma_d)\bigr\}$ will be represented by a $d$-tuple of skew diagrams $\Gamma_1,\dots,\Gamma_d$ with each place $\gamma_1,\dots,\gamma_d$ written above each corresponding skew diagram. The size $|\bGamma|$ of the placed skew diagram $\bGamma$ is the total number of nodes, that is, $|\bGamma|=|\Gamma_1|+\dots+|\Gamma_d|$. For example,
\[\left\{
\begin{array}{c}
\gamma_1 \\[0.5em]
\begin{array}{ccc}
\scriptstyle{\times} & \hspace{-0.35cm}\cdot & \hspace{-0.35cm}\fbox{\phantom{\scriptsize{$2$}}} \\[-0.2em]
\fbox{\phantom{\scriptsize{$2$}}} & \hspace{-0.35cm}\fbox{\phantom{\scriptsize{$2$}}} & \hspace{-0.35cm}\fbox{\phantom{\scriptsize{$2$}}}
\end{array}
\end{array}
,
\begin{array}{c}
\gamma_2 \\[0.5em]
\begin{array}{cc}
\fbox{\phantom{\scriptsize{$2$}}} & \hspace{-0.35cm}\fbox{\phantom{\scriptsize{$2$}}} \\[-0.2em]
\fbox{\phantom{\scriptsize{$2$}}} & 
\end{array}\\[-1.0em]
\ 
\end{array}
\right\}\]
represents a placed skew diagram of size 7.

We say that two placed skew diagrams of the form $\bigl\{(\Gamma_1,\gamma_1)\bigr\}$ and $\bigl\{(\Gamma'_1,\gamma'_1)\bigr\}$ are equivalent if $\gamma'_1=\gamma_1q^{2a}$ for some $a\in\Z$ and $\Gamma'_1$ is equivalent (as a skew diagram) to the translated of $\Gamma_1$ by $a$ steps to the left if $a\geq0$, and by $-a$ steps to the right if $a\leq0$. For example, we have
\begin{equation}\label{ex-eq}
\left\{\begin{array}{c}
\gamma_1 \\[0.5em]
\begin{array}{cc}
\fbox{\phantom{\scriptsize{$2$}}} & \hspace{-0.35cm}\fbox{\phantom{\scriptsize{$2$}}} \\[-0.2em]
\fbox{\phantom{\scriptsize{$2$}}} & 
\end{array}
\end{array}\right\}
\ \sim\ \ 
\left\{\begin{array}{c}
\gamma_1q^{-2} \\[0.5em]
\begin{array}{ccc}
\scriptstyle{\times} & \hspace{-0.25cm}\fbox{\phantom{\scriptsize{$2$}}} & \hspace{-0.35cm}\fbox{\phantom{\scriptsize{$2$}}} \\[-0.2em]
\cdot & \hspace{-0.25cm}\fbox{\phantom{\scriptsize{$2$}}} & 
\end{array}
\end{array}\right\}
\ \sim\ \ 
\left\{\begin{array}{c}
\gamma_1q^{2} \\[0.5em]
\begin{array}{cc}
\scriptstyle{\times} & \hspace{-0.35cm}\cdot \\[-0.2em]
\fbox{\phantom{\scriptsize{$2$}}} & \hspace{-0.35cm}\fbox{\phantom{\scriptsize{$2$}}} \\[-0.2em]
\fbox{\phantom{\scriptsize{$2$}}} & 
\end{array}
\end{array}\right\}\ \ \ \ \ ,
\end{equation}
where we noticed that, by equivalence under translation along the diagonal, translating a skew diagram to the left is equivalent to translating vertically downward. Note also that if $\gamma_1=\gamma'_1$ then $\bigl\{(\Gamma_1,\gamma_1)\bigr\}$ and $\bigl\{(\Gamma'_1,\gamma_1)\bigr\}$ are equivalent if and only if $\Gamma_1$ and $\Gamma'_1$ are equivalent as skew diagrams.

Then two placed skew diagrams $\bigl\{(\Gamma_1,\gamma_1),\dots,(\Gamma_d,\gamma_d)\bigr\}$ and $\bigl\{(\Gamma'_1,\gamma'_1),\dots,(\Gamma'_{d'},\gamma'_{d'})\bigr\}$ are said to be equivalent if $d=d'$ and there is some permutation $\pi$ such that $\bigl\{(\Gamma_a,\gamma_a)\bigr\}\sim\bigl\{(\Gamma'_{\pi(a)},\gamma'_{\pi(a)})\bigr\}$ for $a=1,\dots,d$. Note that the size of placed skew diagrams is conserved by the equivalence relation.
\begin{definition}\label{PSS-A}
A \emph{placed skew shape (of size $N$)} is an equivalence class of placed skew diagrams (of size $N$).\end{definition}

\paragraph{\textbf{Tableaux.}} A placed node is a pair $(\theta, \gamma)$, where $\theta\in\Z^2$ is a node and $\gamma\in\C(q)\backslash\{0\}$. A placed skew diagram can be seen as a set of placed nodes. A placed skew shape can also be seen as a set of placed nodes, up to equivalence as described above. 

A tableau (of shape $\bGamma$) is a bijection between the set of placed nodes of some placed skew shape $\bGamma$ and the set $\{0,1,\dots,|\bGamma|-1\}$. It is represented by filling the placed nodes of the skew diagrams representing $\bGamma$ by numbers $0,1,\dots,|\bGamma|-1$. The size $|\bT|$ of a tableau $\bT$ is the size of its shape.

\begin{remark}
One could define a tableau as a filling of the placed nodes of a placed skew diagram. Then our notion of tableaux corresponds to considering this definition up to equivalence, the equivalence being naturally induced from the equivalence of placed skew diagrams (the nodes are translated with their numbers in them). We will in all the paper consider placed skew diagrams and associated tableaux up to equivalence, very often without mentioning it anymore. For example, all our graphical representations of tableaux in later examples are to be considered up to equivalence.\hfill$\triangle$
\end{remark}

A tableau is standard if the numbers ascend along rows and down columns in every skew diagram. The notion of being standard or not for a tableau is well-defined since compatible with the equivalence relation on placed skew diagrams.

The $q$-content, or simply the content, of a placed node $\btheta=(\theta, \gamma)$ is $\qc(\btheta):=\gamma q^{2\cc(\theta)}=\gamma q^{2(y-x)}$, where $\theta=(x,y)$. Let $\bT$ be a tableau and, for $i=0,1,\dots,|\bT|-1$, let $\btheta_i$ be the placed node with number $i$. We set $\qc_i(\bT):=\qc(\btheta_i)$ for $i=0,1,\dots,|\bT|-1$. The contents $\qc_i(\bT)$ are well-defined since, for a placed node of a placed skew diagram, its content is conserved by the equivalence relation of placed skew diagram (see example (\ref{ex-eq})).

We call $\bigl(\qc_0(\bT),\dots,\qc_{|\bT|-1}(\bT)\bigr)$ the sequence of contents of $\bT$ and denote it by $\Seq(\bT)$. For example, the tableau
\[\left\{
\begin{array}{c}
\gamma_1 \\[0.5em]
\begin{array}{ccc}
\scriptstyle{\times} & \hspace{-0.35cm}\cdot & \hspace{-0.35cm}\fbox{\scriptsize{$1$}} \\[-0.2em]
\fbox{\scriptsize{$0$}} & \hspace{-0.35cm}\fbox{\scriptsize{$2$}} & \hspace{-0.35cm}\fbox{\scriptsize{$5$}}
\end{array}
\end{array}
,
\begin{array}{c}
\gamma_2 \\[0.5em]
\begin{array}{cc}
\fbox{\scriptsize{$3$}} & \hspace{-0.35cm}\fbox{\scriptsize{$6$}} \\[-0.2em]
\fbox{\scriptsize{$4$}} & 
\end{array}
\end{array}
\right\}\]
is standard and its sequence of contents is $\Seq(\bT)=(\gamma_1q^{-2},\,\gamma_1q^4,\,\gamma_1,\,\gamma_2,\,\gamma_2q^{-2},\,\gamma_1q^2,\,\gamma_2q^2)$.

For a placed skew shape $\bGamma$, we denote by $\STab(\bGamma)$ the set of standard tableaux of shape $\bGamma$, and we set
\[\STab(N):=\{\text{standard tableaux of size $N$}\}\ .\]

\subsection{Standard tableaux and sequences of contents}

Recall that $\Seq_N:=\bigl(\C(q)^{\times}\bigr)^N$. We recall the following characterization of standard tableaux in terms of their sequences of contents and we sketch a proof  (see, for example, \cite[Lemma 2.2 and parts of Theorem 4.1]{Ra}). 
\begin{proposition}\label{prop-tab-A}
The set $\STab(N)$ is in bijection with the set of sequences $(a_1,\dots,a_N)\in\Seq_N$ satisfying:
\[\text{($\star$)\ \  for any $i,j=1,\dots,N$ with $i<j$, if $a_i=a_j$ then $\{a_iq^2,a_iq^{-2}\}\subset\{a_{i+1},\dots,a_{j-1}\}$\,.}\]
\end{proposition}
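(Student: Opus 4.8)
The plan is to construct explicitly the two mutually inverse maps between $\STab(N)$ and the set of sequences satisfying ($\star$), and to verify that they are well-defined. In one direction, given a standard tableau $\bT$ of shape $\bGamma$, one simply takes its sequence of contents $\Seq(\bT)=(\qc_0(\bT),\dots,\qc_{N-1}(\bT))$ (reindexing by $1,\dots,N$ to match the statement). First I would check that this sequence satisfies ($\star$). Suppose $\qc_i(\bT)=\qc_j(\bT)$ with $i<j$; write $\btheta_i=(\theta_i,\gamma)$ and $\btheta_j=(\theta_j,\gamma')$. Since contents are of the form $\gamma q^{2a}$ with the $\gamma$'s in distinct $q^{2\Z}$-cosets, the two placed nodes must lie in the same skew diagram $\Gamma_u$ (so $\gamma=\gamma'$ up to the fixed normalization), and $\cc(\theta_i)=\cc(\theta_j)$, i.e. they lie on the same diagonal. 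Two distinct nodes of a skew diagram on the same diagonal — and, since $\bT$ is standard, filled with the larger number strictly southeast of the smaller — force the existence of the node immediately to the right of $\btheta_i$ (content $\qc_i q^2$) and the node immediately below $\btheta_i$ (content $\qc_i q^{-2}$); here one uses that $\theta_j$ being strictly below and strictly right of $\theta_i$ on the same diagonal guarantees both these intermediate nodes belong to the diagram. Standardness then forces both of these nodes to carry numbers strictly between $i$ and $j$, giving ($\star$).

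For the reverse direction, I would describe how a sequence $(a_1,\dots,a_N)$ satisfying ($\star$) is assembled into a standard tableau by reading the entries one at a time and placing node $i$ according to the content $a_i$. Concretely, group the values $a_i$ by their $q^{2\Z}$-coset; within a coset with representative $\gamma$, the node carrying label $i$ with $a_i=\gamma q^{2c}$ is placed on diagonal $c$, and ($\star$) guarantees that among labels already placed on diagonal $c$ (there is at most one "currently exposed" such position because of the forced presence of $a_iq^{\pm2}$ between equal values) there is a unique legal position extending the partial skew diagram — namely immediately below-and-right of the previously placed node of that diagonal, or a fresh starting position. One must check that at each step the set of occupied nodes of a given coset forms a (translated) skew diagram and that the numbering ascends along rows and down columns; this is exactly where condition ($\star$) is used, since $a_iq^2$ and $a_iq^{-2}$ appearing between two equal contents is precisely what is needed to have placed the node to the left and the node above before one places a second node on the same diagonal. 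Finally one collects, for each coset, the resulting skew diagram together with its place $\gamma$, and discards cosets with no node; the normalization/equivalence of placed skew shapes makes the choice of $\gamma$ and of the diagonal-zero position immaterial.

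The two constructions are visibly inverse to one another: reading off contents from the tableau produced by the second map returns the original sequence, and running the second map on $\Seq(\bT)$ reconstructs $\bT$ up to equivalence of placed skew shapes, because the relative positions of all nodes within each connected component are determined by the contents and standardness. I expect the main obstacle to be the bookkeeping in the reverse direction: showing that ($\star$) forces, at every stage, a \emph{unique} admissible position for the next node and that the accumulated node set is always a legitimate skew diagram (no gaps in rows or columns, rows weakly decreasing in length, etc.). This amounts to an induction on $N$ where the inductive hypothesis must be strong enough to track, for each diagonal, the coordinates of the last node placed on it; condition ($\star$) is exactly the combinatorial input that makes the induction go through. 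I would also remark that this is essentially \cite[Lemma 2.2 and Theorem 4.1]{Ra}, so I would keep the argument brief and refer there for the full details of the skew-diagram bookkeeping.
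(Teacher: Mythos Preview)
Your proposal is correct and follows essentially the same route as the paper: the forward map is $\bT\mapsto\Seq(\bT)$, verified to satisfy ($\star$) via the forbidden $2\times2$ configurations in a skew diagram, and the inverse is built by induction on $N$, placing one node at a time. The paper makes the inductive step slightly more explicit by splitting into four cases according to whether $q^{-2}a_N$, $q^{2}a_N$, both, or neither occur among $a_1,\dots,a_{N-1}$ (your ``unique admissible position'' discussion would unfold into exactly this case analysis), and it likewise defers the finer bookkeeping to \cite{Ra}.
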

\begin{proof}
We define a map $\phi$ from the set $\STab(N)$ to the set $\Seq_N$ by
\begin{equation}\label{bij-A}
\STab(N)\ni\bT\ \ \longmapsto\ \ \Seq(\bT)\in\Seq_N\ .
\end{equation}
For any $\bT\in\STab(N)$, the sequence $\phi(\bT)$ satisfies Condition ($\star$) because the following configurations $\begin{array}{ll}
 \Box & \hspace{-0.35cm}\cdot \\[-0.7em]
\,\cdot& \hspace{-0.45cm}\Box
\end{array} $, $\begin{array}{ll}
 \Box & \hspace{-0.45cm}\Box \\[-0.7em]
\,\cdot & \hspace{-0.45cm}\Box
\end{array} $ and $\begin{array}{ll}
 \Box & \hspace{-0.35cm}\cdot \\[-0.7em]
\Box & \hspace{-0.45cm}\Box
\end{array} $ can not appear in a skew diagram and because, in a standard tableau, whenever $\begin{array}{ll}
 \fbox{\scriptsize{$i_1$}} & \hspace{-0.35cm}\fbox{\scriptsize{$i_2$}} \\[-0.1em]
\fbox{\scriptsize{$i_3$}} & \hspace{-0.35cm}\fbox{\scriptsize{$i_4$}}
\end{array}$ appears, we must have $i_1<i_2,i_3<i_4$.

Next, we construct by induction on $N$ a map $\psi$ from the set of sequences in $\Seq_N$ satisfying Condition ($\star$) to the set $\STab(N)$ such that $\psi\circ\phi$ and $\phi\circ\psi$ are identity maps. 

Let $N=1$. We define $\psi$ by mapping $a_1\in\C(q)^{\times}$ to the standard tableau $\left\{\begin{array}{c} a_1\\[0.5em]\fbox{\scriptsize{$1$}}\end{array}\right\}$. We obviously have that $\phi\circ\psi$ is the identity map and moreover that $\psi(a_1)$ is the unique element of $\STab(1)$ with sequence of contents $(a_1)$. Therefore, we also have that $\psi\circ\phi$ is the identity map of $\STab(1)$.

Now we fix $(a_1,\dots,a_N)\in \Seq_N$ with $N>1$ satisfying ($\star$). Assume by induction that we have a unique element $\bT^{(N-1)}\in\STab(N-1)$ with sequence of contents $(a_1,\dots,a_{N-1})$. We construct below a standard tableau $\bT$ such that $\bT$ is obtained from $\bT^{(N-1)}$ by adding a placed node with the number $N$ in it and with content $a_N$. We treat several cases. For each case, it is straightforward to check  that $\bT$ is the unique element of $\Tab(N)$ with the required property. This implies, by induction hypothesis, that $\bT$ is the unique element of $\STab(N)$ with content string $(a_1,\dots,a_N)$. Then, setting $\psi(a_1,\dots,a_N):=\bT$, we conclude that $\psi\circ\phi$ and $\phi\circ\psi$ are identity maps.

\emph{Case (1).} Assume that, for any $i=1,\dots,N-1$, we have $a_i\notin\{q^{-2}a_N,a_N,q^2a_N\}$. To construct $\bT$, we add a new connected component $\fbox{\scriptsize{$N$}}$ in one of the diagram of $\bT^{(N-1)}$ if $a_N=q^{2b}a_i$ for some $i\in\{1,\dots,N-1\}$ and some $b\in\Z$; otherwise we set $\bT=\bT^{(N-1)}\cup\left\{\begin{array}{c} a_N\\[0.5em]\fbox{\scriptsize{$N$}}\end{array}\right\}\,$.

\emph{Case (2).} Assume that there are some $i,j\in\{1,\dots,N-1\}$ such that $a_i=q^{-2}a_N$ and $a_j=q^{2}a_N$. We choose the largest $i$ and $j$ with this property. Then one of the diagram of $\bT^{(N-1)}$ contains a part of the form $\begin{array}{ll}
 & \hspace{-0.35cm}\fbox{\scriptsize{$j$}} \\[-0.1em]
\fbox{\scriptsize{$i$}} & \hspace{-0.20cm}\cdot
\end{array}$, where there is no box in the position marked by the dot. To construct $\bT$, we add a placed node with number $N$ in the position marked by the dot.

\emph{Case (3).} Assume that, for any $j\in\{1,\dots,N-1\}$, we have $a_j\neq q^{2}a_N$, and that there is some $i\in\{1,\dots,N-1\}$ such that $a_i=q^{-2}a_N$. Then, by Condition ($\star$), we have $a_j\neq a_N$ for any $j\in\{1,\dots,N-1\}$. We claim that we can assume that $\bT^{(N-1)}$ contains a part of the form $\begin{array}{ll}
 \cdot & \hspace{-0.20cm}\cdot  \\[-0.1em] 
  \cdot & \hspace{-0.20cm}\cdot \\[-0.1em]
\fbox{\scriptsize{$i$}} & \hspace{-0.20cm}\cdot
\end{array}$, where there is no box above $\fbox{\scriptsize{$i$}}$ in the same column and no box in the next column to the right of $\fbox{\scriptsize{$i$}}$\,. Indeed, as $a_j\neq a_N$ for any $j\in\{1,\dots,N-1\}$, the positions just above $\fbox{\scriptsize{$i$}}$ and just to the right of $\fbox{\scriptsize{$i$}}$ must be both empty. In turn, as $\bT^{(N-1)}$ is a tableau, every position above $\fbox{\scriptsize{$i$}}$ in the same column must be empty as well, and so must be every position below the line of $\fbox{\scriptsize{$i$}}$ in the column right to it. As moreover $a_j\neq q^{2}a_N$ for any $j\in\{1,\dots,N-1\}$, we have that, if there is a box, say $\fbox{\scriptsize{$k$}}$\,, in the next column to the right of $\fbox{\scriptsize{$i$}}$ then it must lie not lower than the second line above $\fbox{\scriptsize{$i$}}$\,. Therefore we can translate the connected component containing $\fbox{\scriptsize{$k$}}$ one step to the south-east and thus we have $\bT^{(N-1)}$ of the desired form. Finally, to construct $\bT$, we add a placed node with number $N$ in the position just to the right of $\fbox{\scriptsize{$i$}}$ in the same line.

\emph{Case (4).} Assume that there is some $i\in\{1,\dots,N-1\}$ such that $a_i=q^{2}a_N$, and that, for any $j\in\{1,\dots,N-1\}$, we have $a_j\neq q^{-2}a_N$. This situation is equivalent to Case (3) by reflection through the main diagonal.
\end{proof}
\begin{corollary}\label{cor-tab-A}
Two standard tableaux are the same if and only if their sequences of contents coincide.
\end{corollary}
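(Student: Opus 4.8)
The plan is to read off the corollary from Proposition~\ref{prop-tab-A}, whose proof does more than assert a bijection: it constructs an explicit two-sided inverse $\psi$ to the map $\phi\colon\STab(N)\to\Seq_N$, $\bT\mapsto\Seq(\bT)$, and in particular establishes $\psi\circ\phi=\mathrm{id}_{\STab(N)}$. The corollary is precisely the injectivity half of that statement, made explicit. So first I would recall from the proof of Proposition~\ref{prop-tab-A} that $\phi$ indeed takes values in the set of sequences satisfying Condition~($\star$), and that on that set $\psi\circ\phi$ is the identity.

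The ``only if'' direction is immediate: the sequence of contents $\Seq(\bT)$ is a well-defined invariant of a tableau, being compatible with the equivalence relation on placed skew diagrams (as observed just before Proposition~\ref{prop-tab-A}, the content of a placed node is preserved under that equivalence). Hence two equal tableaux have equal sequences of contents.

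For the ``if'' direction, suppose $\bT,\bT'\in\STab(N)$ satisfy $\Seq(\bT)=\Seq(\bT')$, that is $\phi(\bT)=\phi(\bT')$. Applying $\psi$ and using $\psi\circ\phi=\mathrm{id}_{\STab(N)}$ yields $\bT=\psi(\phi(\bT))=\psi(\phi(\bT'))=\bT'$. Alternatively, one can argue directly by induction on $N$ in the spirit of the proof of Proposition~\ref{prop-tab-A}: the case $N=1$ is clear since the shape $\fbox{\scriptsize{$0$}}$ with place $\Seq(\bT)$ is forced; and for $N>1$ one restricts $\bT$ and $\bT'$ to the placed nodes carrying the first $N-1$ labels, applies the inductive hypothesis to conclude these restrictions coincide, and then invokes the case analysis (1)--(4) of that proof, which shows that the position of the last placed node is uniquely determined by its content together with the restricted tableau.

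I do not expect any genuine obstacle: all of the combinatorial work is already carried out in Proposition~\ref{prop-tab-A}, and the only point to make is that the inverse map constructed there gives injectivity of $\bT\mapsto\Seq(\bT)$, which is exactly the assertion of the corollary.
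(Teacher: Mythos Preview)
Your proposal is correct and matches the paper's approach: the paper states the corollary immediately after Proposition~\ref{prop-tab-A} without proof, treating it as a direct consequence of the bijection (and in particular the injectivity of $\phi$) established there. Your explicit unpacking of $\psi\circ\phi=\mathrm{id}_{\STab(N)}$ is exactly the intended argument.
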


\subsection{Calibrated spectrum of $H(GL_{N+1})$}\label{subsec-rep-GL}

We consider the affine Hecke algebra $H(GL_{N+1})$ (corresponding to $R=A_N$ and $\sv$ an extremity); see Section \ref{subsec-GL}. We use the ordered basis $\cB_{A_N,1}=(\delta_{0},\delta_1,\dots,\delta_N)$ of the $\Z$-module $L_{A_N,1}$ to describe the calibrated spectrum. As explained in Section \ref{subsec-prel3}, given the basis $\cB_{A_N,1}$, a character in the calibrated spectrum $\text{C-Spec}\bigl(H(GL_{N+1})\bigr)$ is identified with a sequence of eigenvalues in $\text{C-Eig}(\cB_{A_N,1})\subset\Seq_{N+1}$.

We have the following description of the calibrated spectrum of the affine Hecke algebras $H(GL_{N+1})$ proved in \cite{Ra} (see also \cite{Ch1,Ch2}). We will use it as a first step to prove a generalisation for general $(R,\sv)$ in Section \ref{subsec-cal}.
\begin{theorem}\label{theo-GL}
The calibrated spectrum $\text{C-Spec}\bigl(H(GL_{N+1})\bigr)$ is in bijection with the set of standard tableaux of size $N+1$. More precisely, we have
\begin{equation}\label{phi-Sn}
\text{C-Eig}(\cB_{A_N,1})=\{\Seq(\bT)\ |\ \bT\in\STab(N+1)\}\ .
\end{equation}
\end{theorem}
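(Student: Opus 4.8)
My plan is to prove the displayed identity (\ref{phi-Sn}); since $\bT\mapsto\Seq(\bT)$ is injective on $\STab(N+1)$ by Corollary \ref{cor-tab-A}, this yields at once the claimed bijection, and by Proposition \ref{prop-tab-A} (reindexing its entries as $(s_0,\dots,s_N)$) it is equivalent to showing that $\text{C-Eig}(\cB_{A_N,1})$ is exactly the set of $(s_0,\dots,s_N)\in\Seq_{N+1}$ satisfying Condition ($\star$). I would prove the two inclusions separately.

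For the inclusion $\supseteq$ I would exhibit the representations explicitly. For a placed skew shape $\bGamma$ of size $N+1$, let $V^{\bGamma}$ be the $\C(q)$-space with basis $\{v_{\bT}\mid \bT\in\STab(\bGamma)\}$; let each $Y_j=X^{\delta_j}$ act diagonally by $Y_jv_{\bT}=\qc_j(\bT)v_{\bT}$, and let $\sigma_i$ act by Young's seminormal matrices on each two-dimensional subspace $\langle v_{\bT},v_{\bT'}\rangle$, where $\bT'$ is obtained from $\bT$ by exchanging the entries $i-1$ and $i$ (and $\sigma_i$ acts diagonally on $v_{\bT}$ when $\bT'$ is not standard). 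Checking that these operators satisfy (\ref{rel-H0}) and (\ref{rel-Lu}) is the classical verification behind the seminormal construction, going back to \cite{Yo} and carried out in this $GL$ generality in \cite{Ch1,Ch2,Ra}. Since the $Y_j$ act diagonalizably, $V^{\bGamma}$ is calibrated, and the weight of $v_{\bT}$ is precisely $\Seq(\bT)$; hence every $\Seq(\bT)$ with $\bT\in\STab(N+1)$ lies in $\text{C-Eig}(\cB_{A_N,1})$.

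For the inclusion $\subseteq$, let $V$ be a calibrated representation of $\C(q)H(GL_{N+1})$ and $v\neq0$ a common eigenvector of the $Y_j$, with eigenvalue sequence $\bS=(s_0,\dots,s_N)$ and character $\chi$. The first step is to show $s_{i-1}\neq s_i$ for all $i$. If $s_{i-1}=s_i$ then $r_i$ fixes $\chi$; writing $P:=\frac{X^x-X^{r_i(x)}}{1-X^{-\alpha_i}}$, which is a polynomial in the commuting $X^y$, relation (\ref{rel-Lu}) applied to any $w$ in the weight space $V_{\chi}$ gives $(X^{r_i(x)}-\chi(X^x))\sigma_iw=-(q-q^{-1})\chi(P)\,w$, and applying $(X^{r_i(x)}-\chi(X^x))$ once more annihilates the right-hand side (as $\chi(X^{r_i(x)})=\chi(X^x)$), so $\sigma_iw$ lies in the generalized $\chi(X^x)$-eigenspace of $X^{r_i(x)}$; by calibration $X^{r_i(x)}$ is diagonalizable, so $(X^{r_i(x)}-\chi(X^x))\sigma_iw=0$, and back-substitution gives $\chi(P)=0$. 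Taking $x=\delta_{i-1}$, for which $\alpha_i^{\vee}(x)=-1$ and hence $P=-X^{\delta_i}$ by the explicit formulas and (\ref{act-W1}), this forces $s_i=\chi(X^{\delta_i})=0$, a contradiction. So no two consecutive entries of $\bS$ coincide. For the remaining part of $(\star)$ I would argue by a minimal counterexample: if $(\star)$ fails, choose $i<j$ with $s_i=s_j$ and $\{s_iq^2,s_iq^{-2}\}\not\subseteq\{s_{i+1},\dots,s_{j-1}\}$ with $j-i$ minimal; by the first step $j-i\geq2$, and minimality forces $s_m\neq s_i$ for $i<m<j$ (else the smaller pairs $(i,m)$ and $(m,j)$, not being counterexamples, would already force $\{s_iq^2,s_iq^{-2}\}\subseteq\{s_{i+1},\dots,s_{j-1}\}$). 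After applying the field automorphism $q\mapsto-q^{-1}$ of $\C(q)$ — which preserves $H(GL_{N+1})$ and interchanges the roles of $q^2$ and $q^{-2}$ — we may assume $s_iq^2\notin\{s_{i+1},\dots,s_{j-1}\}$. Then one invokes the intertwining operators $\tau_w$ of the affine Hecke algebra: each $\tau_i$ restricts to a map $V_{\chi}\to V_{r_i\chi}$ which is an isomorphism whenever $\chi(X^{\alpha_i})\notin\{1,q^2,q^{-2}\}$ (being singular when this ratio is $1$ and nilpotent when it is $q^{\pm2}$); transporting $v$ along a suitable reduced word one reaches a nonzero vector in a weight space whose sequence has two equal consecutive entries, contradicting the first step.

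The main obstacle is exactly this last point: one must choose the transporting Weyl group element so that none of the ratios $\chi(X^{\alpha})$ crossed along a reduced word equals $1$ or $q^{\pm2}$ (otherwise the relevant $\tau$ could annihilate $v$ and the target weight space need not be nonzero). Organizing this bookkeeping — using the minimality of the pair $(i,j)$, the one-sided failure of the in-between condition, and Proposition \ref{prop-tab-A} as the combinatorial model — is the technical heart of the statement, and is the content of \cite{Ra} (see also \cite{Ch1,Ch2}); in the write-up I would either reproduce that argument or simply cite it for this inclusion.
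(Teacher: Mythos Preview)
Your proposal is correct and aligns with the paper's own treatment: the paper does not prove this theorem in the text but simply cites \cite{Ra} for both inclusions (the right-to-left inclusion via the explicit seminormal construction, and the left-to-right inclusion via ``Step 4'' of the proof of Theorem 4.1 in \cite{Ra}). Your write-up actually goes further than the paper by unpacking the first step of the $\subseteq$ direction (the argument that $s_{i-1}\neq s_i$ via calibration and the Lusztig relation) before deferring the intertwiner transport argument to \cite{Ra}; this extra detail is accurate and helpful, and your explicit acknowledgement that the bookkeeping for the transport step is the technical heart (and that you would cite \cite{Ra} for it) matches exactly what the paper does.
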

The inclusion from right to left follows from an explicit construction  \cite{Ra} of calibrated representations of $\C(q)H(GL_{N+1})$. This construction is going to be a special case of the construction in Section \ref{sec-rep} for arbitrary $(R,\sv)$. A proof of the other inclusion can be found in \cite[``Step 4" of the proof of Theorem 4.1]{Ra}.

\section{Tableaux of type $(R,\sv)$ and admissibility}\label{sec-tab}

In this Section, we come back to our general setting, where we fix $R$ and $\sv$ as in Sections \ref{sec-def}-\ref{sec-act}.

\subsection{Tableaux of type $(R,\sv)$}

Let $\bT$ be a tableau of size $N+1$ and $m\in\{0,1,\dots,N\}$. We denote by $\bT^{\downarrow m}$ the tableau of size $m+1$ obtained from $\bT$ by keeping only the placed nodes containing $0,1,\ldots,m$.

\begin{definition}\label{def-tab-R} A \emph{tableau of type $(R,\sv)$} is a triplet $(\bT_1,\bT_2,\bT_3)$ such that:
\begin{itemize}
\vspace{-0.1cm}
\item $\bT_1$, $\bT_2$ and $\bT_3$ are tableaux of size, respectively, $l+1$, $l'+1$ and $l''+1$;
\item $\bT_1^{\downarrow 1}$ and $\bT_2^{\downarrow 1}$ coincide;
\item $\bT_1^{\downarrow k}$ and $\bT_3^{\downarrow k}$ coincide.
\end{itemize}
A tableau $(\bT_1,\bT_2,\bT_3)$ of type $(R,\sv)$ is standard if $\bT_1$, $\bT_2$ and $\bT_{3}$ are standard. We set:
\[\STab(R,\sv):=\{\text{standard tableaux of type $(R,\sv)$}\}\ .\]
\end{definition}

For a tableau $\bT=(\bT_1,\bT_2,\bT_3)$ of type $(R,\sv)$, we define the contents of $\bT$ as follows:
\begin{equation}\label{content}
\begin{array}{ll}
\qc_{a}(\bT):=\qc_a(\bT_1)\,,\quad & \text{for $a=0,1,\dots,l$,}\\[0.4em]
\qc_{\un{b}}(\bT):=\qc_b(\bT_2)\,, & \text{for $b=2,\dots,l'$,}\\[0.4em]
\qc_{\unn{c}}(\bT):=\qc_c(\bT_3)\,, & \text{for $c=k+1,\dots,l''$.}
\end{array}
\end{equation}
We define the sequence of contents $\bT$, and denote it by $\Seq(\bT)$, to be the following ordered collection of the contents of $\bT$:
\begin{equation}\label{content-string}
\bigl(\qc_0(\bT),\qc_{1}(\bT),\ldots,\ldots,\qc_{l}(\bT),\qc_{\un{2}}(\bT),\ldots,\qc_{\un{l'}}(\bT),\qc_{\unn{k+1}}(\bT),\ldots,\qc_{\unn{l''}}(\bT)\bigr)\ .
\end{equation}
Thus, the sequence of contents $\Seq(\bT)$ is naturally a sequence in $\Seq_{R,\sv}$ as defined in Section \ref{sec-act}. In particular, if $\bS=\Seq(\bT)$, then we have $\qc_j(\bS)=\qc_j(\bT)$ for any $j\in\Ve_R\cup\{0\}$.

\paragraph{\textbf{Graphical representation.}} We will represent a tableau $\bT=(\bT_1,\bT_2,\bT_3)$ of type $(R,\sv)$ by a triplet made of the usual representations of $\bT_1$, $\bT_2$ and $\bT_3$ with the following convention:
\vspace{-0.1cm}
\begin{itemize}
\item in $\bT_1$, the numbers in the placed nodes are denoted $0,1,\dots,k,k+1,\ldots,l$\,;
\item in $\bT_2$, the numbers in the placed nodes are denoted $0,1,\un{2},\dots,\un{l'}$\,;
\item in $\bT_3$, the numbers in the placed nodes are denoted $0,1,\dots,k,\unn{k+1},\dots,\unn{l''}$\,.
\end{itemize}
With this convention and according to Definition \ref{def-tab-R}, if two placed nodes of $\bT$ contain the same label, they coincide. Thus, for any $j\in\Ve_R\cup\{0\}$, the content $\qc_j(\bT)$ is the content of the placed node of $\bT$ containing the label $j$.

\begin{remarks}\label{rem-conv}
\textbf{(i)} Assume that we have $\{\un{2},\dots,\un{l'}\}=\emptyset$, that is, $l'=1$. By definition, $\bT_2$ is completely determined by $\bT_1$, namely $\bT_2=\bT_1^{\downarrow 1}$. In this situation, we will omit to represent $\bT_2$.

\textbf{(ii)} Similarly, assume that we have $\{\unn{k+1},\dots,\unn{l''}\}=\emptyset$, that is $l''=k=l$. In this situation,  $\bT_3=\bT_1$ by definition and we will omit to represent $\bT_3$.

\textbf{(iii)} Assume that we are in the $GL_{N+1}$ situation, that is, $R=A_N$ and $\sv$ is an extremity of the Dynkin diagram. Then both assumptions of items \textbf{(i)} and \textbf{(ii)} are satisfied, and a tableau of this type is simply a tableau as in Section \ref{sec-tab-cla}.
\hfill$\triangle$
\end{remarks}

\subsection{Action of $W_0(R)$ on $\STab(R,\sv)$ and admissible standard tableaux.}\label{subsec-adm}

Let $\bS=\bigl(s_{0},s_{1},\ldots,\ldots,s_{l},\,s_{\un{2}},\ldots,s_{\un{l'}},\,s_{\unn{k+1}},\ldots,s_{\unn{l''}}\bigr)\in\Seq_{R,\sv}$. We define 
\begin{equation}\label{substrings}
\bS_1:=(s_{0},s_{1},\dots,s_{l})\,,\ \ \ \bS_2:=(s_{0},s_{1},s_{\un{2}},\dots,s_{\un{l'}})\,,\ \ \ \bS_3:=(s_{0},s_{1},\dots,s_{k},s_{\unn{k+1}},\dots,s_{\unn{l''}})\,.
\end{equation}
and call them the substrings of $\bS$. Note that they appear naturally in the array representation  (\ref{string-array}) of $\bS$.

Assume that the sequence $\bS$ is a sequence of contents $\bS=\Seq(\bT)$, for a tableau $\bT=(\bT_1,\bT_2,\bT_3)$ of type $(R,\sv)$. Then, the substrings $\bS_1,\bS_2,\bS_3$ are, respectively, the sequences of contents of the tableaux $\bT_1,\bT_2,\bT_3$. According to Proposition \ref{prop-tab-A} and Corollary \ref{cor-tab-A}, we have:
\begin{itemize}
\item a sequence $\bS\in\Seq_{R,\sv}$ is the sequence of contents of a standard tableau of type $(R,\sv)$ if and only if the three substrings $\bS_1,\bS_2,\bS_3$ satisfy the condition $(\star)$ of Proposition \ref{prop-tab-A};
\item a standard tableau of type $(R,\sv)$ is uniquely characterised by its sequence of contents.
\end{itemize}

From now on, we very often identify a standard tableau $\bT$ of type $(R,\sv)$ with its sequence of contents $\Seq(\bT)\in\Seq_{R,\sv}$. Recall that we have, for $i\in\Ve_R$, operators $r_i$ and truncated operators $\ovr_i$ on $\Seq_{R,\sv}$. We will abuse notation and write $r_i(\bT)$ and $\ovr_i(\bT)$ instead of $r_i\bigl(\Seq(\bT)\bigr)$ and $\ovr_i\bigl(\Seq(\bT)\bigr)$. We will also write $\oO_{\bT}$ for $\oO_{\Seq(\bT)}$ and call it the truncated orbit of $\bT$. 

We stress that, by definition, $r_i(\bT)$ (and $\ovr_i(\bT)$ if not $\bze$) is a sequence in $\Seq_{R,\sv}$. It does not have to be a sequence of contents of a tableau of type $(R,\sv)$, and in general, it is not. Therefore, we make the following definition.

\begin{definition}\label{def-adm-tab}
A standard tableau $\bT\in\STab(R,\sv)$ of type $(R,\sv)$ is called \emph{admissible} if its truncated orbit $\oO_{\bT}$ only consists of sequences of contents $\Seq(\bT')$ with $\bT'\in\STab(R,\sv)$. We set:
\[\ASTab(R,\sv):=\{\bT\in\STab(R,\sv)\ \text{such that $\bT$ is admissible}\}\ .\]
If $\bT\in\STab(R,\sv)$ is admissible, then $\oO_{\bT}$ is called an admissible truncated orbit of type $(R,\sv)$. 
\end{definition}

\paragraph{\textbf{The action graphically.}}
The action of the operators $r_i$ (or $\ovr_i$), $i\in\Ve_R$, on a tableau $\bT=(\bT_1,\bT_2,\bT_3)$ of type $(R,\sv)$ is interpreted graphically as follows. This is a direct reformulation of the explicit action given in Subsection \ref{subsec-act-exp}. Let $u\in\{1,2,3\}$. To give how $\bT_u$ is transformed by the action of $r_i$, there are three different situations to consider:
\vspace{-0.1cm}
\begin{itemize}
\item If $\bT_{u}$ contains two placed nodes containing respectively $i$ and $i-1$, then they are exchanged.
\item If $\bT_{u}$ contains neither a placed node containing $i$ nor a placed node containing $i-1$, then nothing happens in $\bT_{u}$.
\item If $\bT_{u}$ contains a placed node containing $i-1$ but no placed node containing $i$, then, first, the placed node with $i-1$ moves to the diagonal with content $\qc_i(\bT)$; second, all the placed nodes $\alpha$ with numbers greater than $i-1$ move such that each ratio $\qc(\alpha)/\qc_{i-1}(\bT)$ (\emph{i.e.} the axial distance) is conserved.
\end{itemize}
Note that the third situation only occurs if $i\in\{2,\un{2},k+1,\unn{k+1}\}$. 

Finally, we note that the procedure above gives $\ovr_i(\bT)$ when it is different from $\bze$, and moreover, it is very easy to determine when $\ovr_i(\bT)=\bze$: this happens if and only if the placed nodes containing $i$ and $i-1$ are in adjacent diagonals in the same diagram. 

\begin{remark}\label{rem-act}
Starting from $\bT\in\STab(R,\sv)$ and its graphical representation, we note that there is no ambiguity in the graphical representation of $r_i(\bT)$ if the sequence $r_i(\bT)$ corresponds again to a standard tableau of type $(R,\sv)$ (because then it corresponds to a unique one, as recalled earlier). If $r_i(\bT)$ is not a sequence of contents of a standard tableau of type $(R,\sv)$, we can still interpret it and draw it as a sequence of placed nodes but there is no canonical way to do it (because fixing the content of a placed node only fixes the diagonal to which it belongs). In this situation, this ambiguity is not important since only the fact that $r_i(\bT)$ does not correspond to a standard tableau is relevant and this is easily seen from any representation of $r_i(\bT)$ (see examples below).\hfill$\triangle$
\end{remark}

\subsection{Examples of tableaux and truncated orbits}\label{subsec-ex}

\paragraph{\textbf{(a) $R=A_n$ and $\sv=1$ in the standard labelling.}} This is the situation corresponding to $GL_{n+1}$  as in Section \ref{subsec-GL}. The Weyl group is the symmetric group $S_{n+1}$. We already noted in Remark \ref{rem-conv} that, in this situation, a tableau of type $(R,\sv)$ is a usual tableau (of size $n+1$) as in Section \ref{sec-tab-cla}. For any $i\in\Ve_R=\{1,\dots,n\}$, the action of $r_i$ on a tableau $\bT$ is given simply by exchanging the placed nodes with number $i$ and $i-1$.

In this particular situation, the shape of a tableau remains invariant under the action of the Weyl group $S_{n+1}$. Recall that, given a placed skew shape $\bGamma$, the set $\STab(\bGamma)$ is the set of standard tableaux of shape $\bGamma$. The following is a reformulation, in our setting, of classical combinatorial properties of tableaux.
\begin{proposition}\label{prop-tab-A2}
Let $\bGamma$ be a placed skew shape and $\bT\in\STab(\bGamma)$. We have:
\[\oO_{\bT}=\STab(\bGamma)\ .\]
In particular, every standard tableau is admissible and the admissible truncated orbits are parametrised by placed skew shapes.
\end{proposition}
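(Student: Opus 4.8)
The plan is to establish the two inclusions $\oO_{\bT}\subseteq\STab(\bGamma)$ and $\STab(\bGamma)\subseteq\oO_{\bT}$, and then to read off the two remaining assertions. Throughout I use Remark \ref{rem-conv}(iii): here a tableau of type $(R,\sv)$ is just an ordinary standard tableau of size $n+1$ attached to a placed skew shape, and $r_i$ acts by exchanging the placed nodes labelled $i-1$ and $i$, so it never changes the shape.

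For $\oO_{\bT}\subseteq\STab(\bGamma)$, I would invoke the description of the truncated orbit from Remark \ref{rem-orb}(ii): it suffices to show that if $\bT'\in\STab(\bGamma)$ and $\ovr_i(\bT')\neq\bze$ then $\ovr_i(\bT')\in\STab(\bGamma)$. Since $\ovr_i(\bT')=r_i(\bT')$ merely swaps the placed nodes labelled $i-1$ and $i$, the shape is preserved and only standardness has to be checked. The hypothesis $\ovr_i(\bT')\neq\bze$ means $\qc_i(\bT')\neq q^{\pm2}\qc_{i-1}(\bT')$; as the places attached to distinct connected components of $\bGamma$ differ by a factor outside $q^{2\Z}$, this forces the two nodes labelled $i-1$ and $i$ to lie in different connected components, or, inside one component, on two non-adjacent diagonals. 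In either case they share neither a row nor a column, the only point needing care being the classical observation that two consecutive entries of a standard tableau sitting on adjacent diagonals of the same connected component necessarily lie in a common row or column (otherwise the cell sharing a row with one of them and a column with the other would be forced to carry an entry at once larger and smaller than the two consecutive values). Exchanging two placed nodes sharing no row and no column preserves standardness, which gives this inclusion.

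For the reverse inclusion $\STab(\bGamma)\subseteq\oO_{\bT}$, which I expect to be the main obstacle, I would prove by induction on $N=|\bGamma|-1$ that the truncated operators act transitively on $\STab(\bGamma)$. Given $\bT,\bT'\in\STab(\bGamma)$, let $c$ be the cell of $\bGamma$ carrying $N$ in $\bT'$; it is an outer corner, so nothing lies to its right or below it in $\bGamma$, hence whatever entry $m<N$ occupies $c$ in $\bT$, the entry $m+1$ cannot lie in the row or the column of $c$, and therefore (by the observation above, and since distinct components have an incompatible ratio) $r_{m+1}$ is not killed on $\bT$. Applying $r_{m+1}$ raises the entry at $c$ from $m$ to $m+1$ while staying in $\STab(\bGamma)$ by the first inclusion; iterating $r_{m+1},r_{m+2},\dots,r_N$ brings the entry $N$ to the cell $c$. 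One is then reduced to two standard tableaux of shape $\bGamma$ that agree on the node containing $N$; deleting that node and invoking the induction hypothesis on $\bGamma\setminus\{c\}$ produces a chain of moves using only $r_1,\dots,r_{N-1}$, and since none of these operators is affected by the deleted node --- neither its truncation condition nor standardness, as re-inserting the largest entry at an outer corner never violates it --- the same chain connects $\bT$ to $\bT'$. The delicate bookkeeping is precisely checking that each elementary move used to relocate the top entry is genuinely admissible and that the induction on the smaller shape lifts back.

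Combining the two inclusions then gives $\oO_{\bT}=\STab(\bGamma)$ for every $\bT\in\STab(\bGamma)$. Consequently every standard tableau is admissible, its truncated orbit consisting of standard tableaux of the same placed skew shape and hence of standard tableaux of type $(R,\sv)$, and the admissible truncated orbits are exactly the sets $\STab(\bGamma)$; these are parametrised by placed skew shapes because standard tableaux of inequivalent placed skew shapes are distinct tableaux, so the sets $\STab(\bGamma)$ for distinct $\bGamma$ are disjoint.
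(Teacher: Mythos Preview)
Your proof is correct and follows the same two-inclusion-plus-induction architecture as the paper, but the implementation of each half differs in ways worth noting.

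For $\oO_{\bT}\subseteq\STab(\bGamma)$, the paper argues through the sequence characterisation of Proposition~\ref{prop-tab-A}: writing $\overline{w}$ as a reduced word, if some intermediate tableau first fails to be standard then Condition~$(\star)$ is violated at that step, which forces the two swapped contents to differ by $q^{\pm2}$ and contradicts $\ovr_i\neq\bze$. You argue instead directly on cells. There is one slip of direction: the ``classical observation'' you record (consecutive entries on adjacent diagonals of the same component must share a row or column) is precisely the implication you need, via its contrapositive, for the \emph{second} inclusion; for the first inclusion what is needed is the converse (consecutive entries sharing a row or column must occupy adjacent cells, hence adjacent diagonals), which is even more elementary but is not what you wrote down. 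Both directions hold, so the argument stands once the correct one is invoked at each place.

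For $\STab(\bGamma)\subseteq\oO_{\bT}$, the paper's induction compares the cells $\btheta,\btheta'$ carrying $n$ in $\bT$ and $\bT'$: if they coincide one inducts directly on the shape with that node removed; if not, one first uses the induction hypothesis on $\bT^{\downarrow n-1}$ to bring $n-1$ to $\btheta'$, then applies a single $\ovr_n$ (licit since $\btheta,\btheta'$ cannot sit on adjacent diagonals), and finally inducts again. Your ``bubble-up'' alternative---repeatedly swapping the entry at the outer corner $c$ with its successor until $N$ lands there, each swap being licit by the outer-corner argument combined with the observation above---is an equally standard route; it trades the paper's second appeal to the induction hypothesis for an explicit chain of elementary moves. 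Your remark that the inductive chain on $\bGamma\setminus\{c\}$ lifts back unchanged (the truncation conditions for $\ovr_1,\dots,\ovr_{N-1}$ ignore the node carrying $N$, and re-inserting $N$ at an outer corner preserves standardness) is the point the paper leaves implicit in its ``$\btheta=\btheta'$'' case.
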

\begin{proof} Let $\bGamma$ be a placed skew shape of size $n+1$ and $\bT$ a standard tableau of shape $\bGamma$.

Let $w\in S_{n+1}$ such that $\overline{w}(\bT)\neq\bze$. We need to prove that $\overline{w}(\bT)$ is standard (it is obviously a tableau of shape $\bGamma$). Assume that this is not true. Write $\overline{w}(\bT)=\ovr_{a_m}\dots \ovr_{a_1}(\bT)$, where $r_{a_m}\dots r_{a_1}$ is a reduced expression of $w$ in terms of $r_1,\dots,r_n$. Take $p$ such that $\ovr_{a_{p-1}}\dots \ovr_{a_1}(\bT))$ is standard and $\ovr_{a_{p}}\dots \ovr_{a_1}(\bT)$ is not. Then $\ovr_{a_{p-1}}\dots \ovr_{a_1}(\bT)$ is a sequence satisfying Condition $(\star)$ in Proposition \ref{prop-tab-A} and $\ovr_{a_{p}}\dots \ovr_{a_1}(\bT)$ is not. This is possible only if $\qc_{a_p}(\bT)=q^{\pm2}\qc_{a_p-1}(\bT)$. However, this implies $\ovr_{a_{p}}\dots \ovr_{a_1}(\bT)=\bze$, and in turn $\overline{w}(\bT)=\bze$, which is a contradiction.

Now let $\bT'\in\STab(\bGamma)$. It remains to show that $\bT'\in\oO_{\bT}$. We will use induction on $n$. If $n=0$ this is immediate. Assume that $n>0$. Let $\btheta$ (respectively, $\btheta'$) be the placed node of $\bT$ (respectively, $\bT'$) containing $n$. As $\bT$ is standard, $\btheta$ must be the rightmost node of its line and the lowest node of its column, and similarly for $\btheta'$. In particular, $\btheta$ and $\btheta'$ can not be in adjacent diagonals, and therefore $\qc(\btheta')\neq q^{\pm2}\qc(\btheta)$. Recall that $\bT^{\downarrow n-1}$ denotes the standard tableau of size $n$ obtained from $\bT$ by removing $\btheta$, and $\bT'^{\downarrow n-1}$ the one obtained from $\bT'$ by removing $\btheta'$.
\begin{itemize}
\item If $\btheta=\btheta'$ then $\bT^{\downarrow n-1}$ and $\bT'^{\downarrow n-1}$  are of the same shape and, by induction hypothesis, there is $w\in S_n$ such that $\overline{w}(\bT^{\downarrow n-1})=\bT'^{\downarrow n-1}$, and in turn such that $\overline{w}(\bT)=\bT'$.
\item If $\btheta\neq\btheta'$ then we proceed in three steps:
\begin{itemize}
\item First, note that $\btheta'$ is a placed node of $\bT^{\downarrow n-1}$ which is the rightmost node of its line and the lowest node of its column. So there exists a standard tableau of the same shape than $\bT^{\downarrow n-1}$ with $n-1$ in $\btheta'$. By induction hypothesis, we can obtain it from $\bT^{\downarrow n-1}$ by applying $\overline{w_1}$ for some $w_1\in S_n$.
\item Then, in $\overline{w_1}(\bT)$, the number $n$ is contained in $\btheta$ and $n-1$ in $\btheta'$. So $\ovr_n\overline{w_1}(\bT)\neq\bze$, and in $\ovr_n\overline{w_1}(\bT)$, the number $n$ is contained in $\btheta'$.
\item Finally, $\ovr_n\overline{w_1}(\bT)^{\downarrow n-1}$ is of the same shape than $\bT'^{\downarrow n-1}$ and therefore, by induction hypothesis, we have  $\bT'^{\downarrow n-1}=\overline{w_2}\bigl(\ovr_n\overline{w_1}(\bT)^{\downarrow n-1}\bigr)$ for some $w_2\in S_n$.
\end{itemize}
We conclude that $\bT'=\overline{w_2}\,\ovr_n\overline{w_1}(\bT)$.
\end{itemize}
This concludes the verification of $\oO_{\bT}=\STab(\bGamma)$ and the proof of the proposition.
\end{proof}

\paragraph{\textbf{(b) $R=D_n$ ($n\geq4$) and $\sv=1$ in the standard labelling.}} The new labelling of the vertices of the Dynkin diagram is (with $l=n-1$):
\begin{center}
\setlength{\unitlength}{0.01cm}
\begin{picture}(1550,210)(0,850)
\put(460,1050){\scriptsize{$1$}}
\put(500,1050){\circle*{15}}
\put(500,1050){\line(1,-1){100}}
\put(460,840){\scriptsize{$\unn{3}$}}
\put(500,850){\circle*{15}}
\put(500,850){\line(1,1){100}}
\put(600,950){\circle*{15}}
\put(590,910){\scriptsize{$2$}}
\put(600,950){\line(1,0){150}}
\put(750,950){\circle*{15}}
\put(740,910){\scriptsize{$3$}}
\put(750,950){\line(1,0){50}}
\put(850,950){$\ldots$}
\put(900,950){$\ldots$}
\put(1000,950){\line(1,0){50}}
\put(1050,950){\circle*{15}}
\put(1050,910){\scriptsize{$l$}}
\end{picture}
\end{center}
We show two examples of standard tableaux of type $(R,\sv)$:
\[
\left(\Biggl\{\begin{array}{c}
\gamma_1 \\[0.5em]
\begin{array}{cccc}
\scriptstyle{\times} & \hspace{-0.35cm}\cdot & \hspace{-0.35cm}\fbox{\scriptsize{$2$}} & \hspace{-0.35cm}\fbox{\scriptsize{$3$}} \\[-0.2em]
\fbox{\scriptsize{$0$}} & \hspace{-0.35cm}\fbox{\scriptsize{$1$}} & \hspace{-0.35cm}\fbox{\scriptsize{$4$}} &
\end{array}
\end{array}\Biggr\}\ ,\ 
\Biggl\{\begin{array}{c}
\gamma_1 \\[0.5em]
\begin{array}{ccc}
\scriptstyle{\times} & \hspace{-0.35cm}\cdot & \hspace{-0.35cm}\fbox{\scriptsize{$2$}} \\[-0.2em]
\fbox{\scriptsize{$0$}} & \hspace{-0.35cm}\fbox{\scriptsize{$1$}} & \hspace{-0.35cm}\fbox{\scriptsize{$\unn{3}$}}
\end{array}
\end{array}\Biggr\}
\right)\ ,\ \ 
\left(\Biggl\{\begin{array}{c}
\gamma_1 \\[0.5em]
\begin{array}{cccc}
\fbox{\scriptsize{$0$}} & \hspace{-0.35cm}\fbox{\scriptsize{$1$}} & \hspace{-0.35cm}\fbox{\scriptsize{$2$}} & \hspace{-0.35cm}\fbox{\scriptsize{$3$}}\\[-0.2em]
&&&\end{array}
\end{array},
\begin{array}{c}
\gamma_2 \\[0.5em]
\begin{array}{c}
\fbox{\scriptsize{$4$}}\\[-0.2em]
\ 
\end{array}
\end{array}\Biggr\}\ ,\ 
\Biggl\{\begin{array}{c}
\gamma_1 \\[0.5em]
\begin{array}{cccc}
\fbox{\scriptsize{$0$}} & \hspace{-0.35cm}\fbox{\scriptsize{$1$}} & \hspace{-0.35cm}\fbox{\scriptsize{$2$}} & \hspace{-0.35cm}\fbox{\scriptsize{$\unn{3}$}}\\[-0.2em]
&&&\end{array}
\end{array}\Biggr\}
\right)
\]
where $\gamma_1,\gamma_2\in\C(q)^{\times}$ are two places, and so satisfy by definition $\gamma_2\neq\gamma_1 q^{2a}$ for any $a\in\Z$. The associated sequences of contents are $\bigl(\gamma_1q^{-2},\gamma_1,\gamma_1 q^4,\!\begin{array}{ll}\gamma_1 q^6\,, &\!\!\!\!\gamma_1q^2\\[0.2em] \gamma_1 q^2 & \end{array}\!\!\!\bigr)$ and $\bigl(\gamma_1,\gamma_1q^2,\gamma_1 q^4,\!\begin{array}{ll}\gamma_1q^6\,, & \!\!\!\!\gamma_2\\[0.2em] \gamma_1 q^{6} & \end{array}\!\!\bigr)\,$.

Let $n=4$. We show graphically an example of a truncated orbit of an element of $\STab(R,\sv)$, which turns out to be admissible (see also Example \ref{ex-res}). To save space, we omit to indicate the place $\gamma_1$, which is constant here, and we omit the curly brackets. The non-indicated actions of $\ovr_{1},\ovr_{2},\ovr_{3},\ovr_{\unn{3}}$ give $\bze$.

\[
\begin{array}{ccccc}
 \!\!\!\left(
\begin{array}{cccc}
\scriptstyle{\times} & \hspace{-0.35cm}\cdot & \hspace{-0.35cm}\fbox{\scriptsize{$2$}} & \hspace{-0.35cm}\fbox{\scriptsize{$3$}} \\[-0.2em]
\fbox{\scriptsize{$0$}} & \hspace{-0.35cm}\fbox{\scriptsize{$1$}} & &
\end{array},
\begin{array}{ccc}
\scriptstyle{\times} & \hspace{-0.35cm}\cdot & \hspace{-0.35cm}\fbox{\scriptsize{$2$}} \\[-0.2em]
\fbox{\scriptsize{$0$}} & \hspace{-0.35cm}\fbox{\scriptsize{$1$}} & \hspace{-0.35cm}\fbox{\scriptsize{$\unn{3}$}}
\end{array}
\right)
& \!\!\!\stackrel{\ \ \ovr_{2_{}}\ \,}{\longleftrightarrow}\!\!\! &
 \! \left(
\begin{array}{cccc}
\scriptstyle{\times} & \hspace{-0.35cm}\cdot & \hspace{-0.35cm}\fbox{\scriptsize{$1$}} & \hspace{-0.35cm}\fbox{\scriptsize{$3$}} \\[-0.2em]
\fbox{\scriptsize{$0$}} & \hspace{-0.35cm}\fbox{\scriptsize{$2$}} & &
\end{array},
\begin{array}{ccc}
\scriptstyle{\times} & \hspace{-0.35cm}\cdot & \hspace{-0.35cm}\fbox{\scriptsize{$1$}} \\[-0.2em]
\fbox{\scriptsize{$0$}} & \hspace{-0.35cm}\fbox{\scriptsize{$2$}} & \hspace{-0.35cm}\fbox{\scriptsize{$\unn{3}$}}
\end{array}
\right)
& \!\!\!\stackrel{\ \ \ovr_{1_{}}\ \,}{\longleftrightarrow}\!\!\!\!\! &
\!\!\!\!\!\!\left(
\begin{array}{cccc}
\scriptstyle{\times} & \hspace{-0.35cm}\cdot & \hspace{-0.35cm}\fbox{\scriptsize{$0$}} & \hspace{-0.35cm}\fbox{\scriptsize{$3$}} \\[-0.2em]
\fbox{\scriptsize{$1$}} & \hspace{-0.35cm}\fbox{\scriptsize{$2$}} & &
\end{array},
\begin{array}{ccc}
\scriptstyle{\times} & \hspace{-0.35cm}\cdot & \hspace{-0.35cm}\fbox{\scriptsize{$0$}} \\[-0.2em]
\fbox{\scriptsize{$1$}} & \hspace{-0.35cm}\fbox{\scriptsize{$2$}} & \hspace{-0.35cm}\fbox{\scriptsize{$\unn{3}$}}
\end{array}
\right)\\[1.2em]
&& \ \text{\large{$\updownarrow$}\,\footnotesize{$\ovr_{3}$}}& &  \ \text{\large{$\updownarrow$}\,\footnotesize{$\ovr_{3}$}}\\[0.8em]
&& 
 \!\!\! \!\!\! \!\!\! \!\!\! \!\!\!\left(
\begin{array}{cccc}
\scriptstyle{\times} & \hspace{-0.35cm}\cdot & \hspace{-0.35cm}\fbox{\scriptsize{$1$}} & \hspace{-0.35cm}\fbox{\scriptsize{$2$}} \\[-0.2em]
\fbox{\scriptsize{$0$}} & \hspace{-0.35cm}\fbox{\scriptsize{$3$}} & &
\end{array},
\begin{array}{ccccc}
\scriptstyle{\times} & \hspace{-0.35cm}\cdot & \hspace{-0.35cm}\fbox{\scriptsize{$1$}}& \hspace{-0.35cm}\fbox{\scriptsize{$2$}} & \hspace{-0.35cm}\fbox{\scriptsize{$\unn{3}$}} \\[-0.2em]
\fbox{\scriptsize{$0$}} & & & &
\end{array}
\right)\!\!\!
&  \!\!\!\!\!\!\stackrel{\ \ \ovr_{1_{}}\ \,}{\longleftrightarrow}\!\!\!\!\!  &
\!\!\!\left(
\begin{array}{cccc}
\scriptstyle{\times} & \hspace{-0.35cm}\cdot & \hspace{-0.35cm}\fbox{\scriptsize{$0$}} & \hspace{-0.35cm}\fbox{\scriptsize{$2$}} \\[-0.2em]
\fbox{\scriptsize{$1$}} & \hspace{-0.35cm}\fbox{\scriptsize{$3$}} & &
\end{array},
\begin{array}{ccccc}
\scriptstyle{\times} & \hspace{-0.35cm}\cdot & \hspace{-0.35cm}\fbox{\scriptsize{$0$}}& \hspace{-0.35cm}\fbox{\scriptsize{$2$}} & \hspace{-0.35cm}\fbox{\scriptsize{$\unn{3}$}} \\[-0.2em]
\fbox{\scriptsize{$1$}} & & & &
\end{array}
\right)\\[1.2em]
&& & &  \ \text{\large{$\updownarrow$}\,\footnotesize{$\ovr_{2}$}}\\[0.6em]
&& & & \!\!\!\!\!\left(
\begin{array}{cccc}
\scriptstyle{\times} & \hspace{-0.35cm}\cdot & \hspace{-0.35cm}\fbox{\scriptsize{$0$}} & \hspace{-0.35cm}\fbox{\scriptsize{$1$}} \\[-0.2em]
\fbox{\scriptsize{$2$}} & \hspace{-0.35cm}\fbox{\scriptsize{$3$}} & &
\end{array},
\begin{array}{ccccc}
\scriptstyle{\times} & \hspace{-0.35cm}\cdot & \hspace{-0.35cm}\fbox{\scriptsize{$0$}}& \hspace{-0.35cm}\fbox{\scriptsize{$1$}} & \hspace{-0.35cm}\fbox{\scriptsize{$\unn{3}$}} \\[-0.2em]
\fbox{\scriptsize{$2$}} & & & &
\end{array}
\right)\\[1.2em]
&& & &  \ \text{\large{$\updownarrow$}\,\footnotesize{$\ovr_{\unn{3}}$}}\\[0.6em]
&& & & 
\!\!\!\!\!\!\!\!\!\left(
\begin{array}{cccccc}
\scriptstyle{\times} & \hspace{-0.35cm}\cdot & \hspace{-0.35cm}\fbox{\scriptsize{$0$}} & \hspace{-0.35cm}\fbox{\scriptsize{$1$}} & \hspace{-0.35cm}\fbox{\scriptsize{$2$}} & \hspace{-0.35cm}\fbox{\scriptsize{$3$}} \\[-0.2em]
& & & & & 
\end{array},
\begin{array}{ccccc}
\scriptstyle{\times} & \hspace{-0.35cm}\cdot & \hspace{-0.35cm}\fbox{\scriptsize{$0$}}& \hspace{-0.35cm}\fbox{\scriptsize{$1$}} & \hspace{-0.35cm}\fbox{\scriptsize{$2$}} \\[-0.2em]
\fbox{\scriptsize{$\unn{3}$}} & & & &
\end{array}
\right)
\end{array}
\]

\paragraph{\textbf{(c) $R=E_n$ ($n=6,7,8$) and $\sv=3$ in the standard labelling.}} The new labelling of the vertices of the Dynkin diagram is (with $l=n-2$):
\begin{center}
\setlength{\unitlength}{0.01cm}
\begin{picture}(1550,220)(100,510)
\put(500,700){\circle*{15}}
\put(490,720){\scriptsize{$\un{2}$}}
\put(500,700){\line(1,0){150}}
\put(650,700){\circle*{15}}
\put(640,720){\scriptsize{$1$}}
\put(650,700){\line(1,0){150}}
\put(800,700){\circle*{15}}
\put(790,720){\scriptsize{$2$}}
\put(800,700){\line(0,-1){150}}
\put(800,550){\circle*{15}}
\put(800,510){\scriptsize{$\unn{3}$}}
\put(800,700){\line(1,0){150}}
\put(950,700){\circle*{15}}
\put(940,720){\scriptsize{$3$}}
\put(950,700){\line(1,0){50}}
\put(1050,700){$\ldots$}
\put(1150,700){$\ldots$}
\put(1250,700){\line(1,0){50}}
\put(1300,700){\circle*{15}}
\put(1290,720){\scriptsize{$l$}}
\end{picture}
\end{center}
We show an example of a standard tableau of type $(R,\sv)$:
\[
\bT=\left(\Biggl\{\begin{array}{c}
\gamma_1 \\[0.5em]
\begin{array}{ccc}
\fbox{\scriptsize{$0$}} & \hspace{-0.35cm}\fbox{\scriptsize{$1$}} & \hspace{-0.35cm}\fbox{\scriptsize{$3$}} \\[-0.2em]
\fbox{\scriptsize{$2$}} & \hspace{-0.35cm}\fbox{\scriptsize{$4$}} &
\end{array}
\end{array}\Biggr\}\ ,\ 
\Biggl\{\begin{array}{c}
\gamma_1 \\[0.5em]
\begin{array}{cc}
\fbox{\scriptsize{$0$}} & \hspace{-0.35cm}\fbox{\scriptsize{$1$}} \\[-0.2em]
\fbox{\scriptsize{$\un{2}$}} &\end{array}
\end{array}\Biggr\}\ ,\ 
\Biggl\{\begin{array}{c}
\gamma_1 \\[0.5em]
\begin{array}{ccc}
\fbox{\scriptsize{$0$}} & \hspace{-0.35cm}\fbox{\scriptsize{$1$}} & \hspace{-0.35cm}\fbox{\scriptsize{$\unn{3}$}} \\[-0.4em]
\fbox{\scriptsize{$2$}} &  &
\end{array}
\end{array}\Biggr\}
\right)\]
The associated sequence of contents is $\Seq(\bT)=\Bigl(\gamma_1,\gamma_1q^2,\begin{array}{cc}\gamma_1q^{-2} \,, & 
\begin{array}{cc}
\gamma_1q^4\,, &\gamma_1 \\[0.2em]
\gamma_1q^4 &  \end{array}\\ 
\gamma_1q^{-2} & 
\end{array}\!\Bigr)$. The substrings are $\bS_1=(\gamma_1,\gamma_1q^2,\gamma_1q^{-2},\gamma_1q^4,\gamma_1)$, $\bS_2=(\gamma_1,\gamma_1q^2,\gamma_1q^{-2})$ and $\bS_3=(\gamma_1,\gamma_1q^2,\gamma_1q^{-2},\gamma_1q^4)$ and they satisfy Condition $(\star)$ of Proposition \ref{prop-tab-A}.

We show graphically the action of the operators $\ovr_i$, $i\in\{2,3,4,\un{2},\unn{3}\}$, on the element $\bT$ considered above ($\ovr_1(\bT)=\bze$). In every tableaux below, the node with $0$ has coordinates $(1,1)$. Again, the place $\gamma_1$, which is constant here, and the curly brackets are omitted. As can be seen graphically, $\ovr_{\un{2}}(\bT)$ is a sequence of placed nodes which is not a tableau. Therefore, the truncated orbit $\oO_{\bT}$ is not admissible (and thus neither are the standard tableaux appearing in it).

\[
\begin{array}{cccc}
\left(
\begin{array}{ccc}
& & \\[-0.2em]
\fbox{\scriptsize{$0$}} & \hspace{-0.35cm}\fbox{\scriptsize{$1$}} & \hspace{-0.35cm}\fbox{\scriptsize{$2$}} \\[-0.2em]
\fbox{\scriptsize{$3$}} & \hspace{-0.35cm}\fbox{\scriptsize{$4$}} &
\end{array},
\begin{array}{cc}
 & \\[-0.2em]
\fbox{\scriptsize{$0$}} &  \hspace{-0.35cm}\fbox{\scriptsize{$1$}} \\[-0.2em]
\fbox{\scriptsize{$\un{2}$}} & \end{array},
\begin{array}{ccccc}
 & & & & \hspace{-0.35cm}\fbox{\scriptsize{$\unn{3}$}}\\[-0.2em]
\fbox{\scriptsize{$0$}} & \hspace{-0.35cm}\fbox{\scriptsize{$1$}} & \hspace{-0.35cm}\fbox{\scriptsize{$2$}} & \hspace{-0.35cm}\cdot &  \hspace{-0.35cm}\cdot \\[-0.2em]
&  & & &
\end{array}
\right) 
& &  & 
\hspace{-5cm}\left(
\begin{array}{ccc}
\fbox{\scriptsize{$0$}} & \hspace{-0.35cm}\fbox{\scriptsize{$2$}} & \hspace{-0.35cm}\fbox{\scriptsize{$3$}} \\[-0.2em]
\fbox{\scriptsize{$1$}} & \hspace{-0.35cm}\fbox{\scriptsize{$4$}} & \\[-0.2em]
$\ $ & & 
\end{array},
\begin{array}{cc}
 &  \hspace{-0.35cm}\fbox{\scriptsize{$0$}} \\[-0.2em]
 & \hspace{-0.35cm}\fbox{\scriptsize{$1$}} \\[-0.2em]
  \fbox{\scriptsize{$\un{2}$}} & 
  \end{array},
\begin{array}{ccc}
\fbox{\scriptsize{$0$}} & \hspace{-0.35cm}\fbox{\scriptsize{$2$}} & \hspace{-0.35cm}\fbox{\scriptsize{$\unn{3}$}} \\[-0.4em]
\fbox{\scriptsize{$1$}} &  & \\[-0.2em]
$\ $ & & 
\end{array}
\right)  \\[2em]

\hspace{2.5cm}\text{\footnotesize{$\ovr_{3}$}}\!\!\searrow\mspace{-18.0mu}\nwarrow & & \hspace{-1.5cm}\swarrow\mspace{-18.0mu}\nearrow\!\text{\footnotesize{$\ovr_{2}$}} & \\[0.8em]

& \hspace{-2cm}\left(
\begin{array}{ccc}
\fbox{\scriptsize{$0$}} & \hspace{-0.35cm}\fbox{\scriptsize{$1$}} & \hspace{-0.35cm}\fbox{\scriptsize{$3$}} \\[-0.2em]
\fbox{\scriptsize{$2$}} & \hspace{-0.35cm}\fbox{\scriptsize{$4$}} &
\end{array},
\begin{array}{cc}
\fbox{\scriptsize{$0$}} & \hspace{-0.35cm}\fbox{\scriptsize{$1$}} \\[-0.2em]
\fbox{\scriptsize{$\un{2}$}} &\end{array}
,
\begin{array}{ccc}
\fbox{\scriptsize{$0$}} & \hspace{-0.35cm}\fbox{\scriptsize{$1$}} & \hspace{-0.35cm}\fbox{\scriptsize{$\unn{3}$}} \\[-0.4em]
\fbox{\scriptsize{$2$}} &  &
\end{array}
\right)
& \!\!\!\stackrel{\ \ \ovr_{4_{}}\ \,}{\longleftrightarrow}\!\!\!\! &
\!\!\left(
\begin{array}{ccc}
\fbox{\scriptsize{$0$}} & \hspace{-0.35cm}\fbox{\scriptsize{$1$}} & \hspace{-0.35cm}\fbox{\scriptsize{$4$}} \\[-0.2em]
\fbox{\scriptsize{$2$}} & \hspace{-0.35cm}\fbox{\scriptsize{$3$}} &
\end{array},
\begin{array}{cc}
\fbox{\scriptsize{$0$}} & \hspace{-0.35cm}\fbox{\scriptsize{$1$}} \\[-0.2em]
\fbox{\scriptsize{$\un{2}$}} &\end{array}
,
\begin{array}{ccc}
\fbox{\scriptsize{$0$}} & \hspace{-0.35cm}\fbox{\scriptsize{$1$}} & \hspace{-0.35cm}\fbox{\scriptsize{$\unn{3}$}} \\[-0.4em]
\fbox{\scriptsize{$2$}} &  &
\end{array}
\right)\\[2em]
\hspace{3cm}\swarrow\mspace{-18.0mu}\nearrow\!\text{\footnotesize{$\ovr_{\unn{3}}$}} & & \hspace{-2.5cm}\!\text{\footnotesize{$\ovr_{\un{2}}$}}\!\!\searrow\mspace{-18.0mu}\nwarrow & \\[0.8em]

\left(
\begin{array}{ccccc}
&& & & \hspace{-0.35cm}\fbox{\scriptsize{$3$}}\\[-0.2em]
\fbox{\scriptsize{$0$}} & \hspace{-0.35cm}\fbox{\scriptsize{$1$}} & \hspace{-0.35cm}\fbox{\scriptsize{$2$}} &  \hspace{-0.35cm}\fbox{\scriptsize{$4$}} & \\[-0.2em]
&& & &
\end{array},
\begin{array}{cc}
& \\[-0.2em]
\fbox{\scriptsize{$0$}} & \hspace{-0.35cm}\fbox{\scriptsize{$1$}} \\[-0.2em]
\fbox{\scriptsize{$\un{2}$}} &\end{array}
,
\begin{array}{ccc}
 & &\\[-0.2em]
\fbox{\scriptsize{$0$}} & \hspace{-0.35cm}\fbox{\scriptsize{$1$}} & \hspace{-0.35cm}\fbox{\scriptsize{$2$}} \\[-0.2em]
\fbox{\scriptsize{$\unn{3}$}} &  &
\end{array}
\right)
& & &
\hspace{-4.5cm}\left(
\begin{array}{ccc}
 & \hspace{-0.35cm}\fbox{\scriptsize{$0$}} &  \\[-0.2em]
 & \hspace{-0.35cm}\fbox{\scriptsize{$1$}} &  \hspace{-0.35cm}\fbox{\scriptsize{$3$}}\\[-0.2em]
 \fbox{\scriptsize{$2$}} & \hspace{-0.35cm}\fbox{\scriptsize{$4$}} &
\end{array},
\begin{array}{cc}
\fbox{\scriptsize{$0$}} & \hspace{-0.35cm}\fbox{\scriptsize{$\un{2}$}} \\[-0.2em]
\fbox{\scriptsize{$1$}} &\\[-0.2em]
 & 
\end{array},
\begin{array}{ccc}
 & \hspace{-0.35cm}\fbox{\scriptsize{$0$}} &  \\[-0.2em]
 & \hspace{-0.35cm}\fbox{\scriptsize{$1$}} &  \hspace{-0.35cm}\fbox{\scriptsize{$\unn{3}$}}\\[-0.4em]
 \fbox{\scriptsize{$2$}} & &
\end{array}
\right)
\end{array}
\]
In terms of sequences, we have:
\[\Seq(\bT)=\Bigl(\gamma_1,\gamma_1q^2,\begin{array}{cc}\gamma_1q^{-2} \,, & 
\begin{array}{cc}
\gamma_1q^4\,, &\gamma_1 \\[0.2em]
\gamma_1q^4 &  \end{array}\\ 
\gamma_1q^{-2} & 
\end{array}\!\Bigr)\stackrel{\ \ \ovr_{\un{2}_{}}\ \,}{\longleftrightarrow}\Bigl(\gamma_1,\gamma_1q^{-2},\begin{array}{cc}\gamma_1q^{-6} \,, & 
\begin{array}{cc}
\gamma_1\,, &\gamma_1q^{-4} \\[0.2em]
\gamma_1 &  \end{array}\\ 
\gamma_1q^{2} & 
\end{array}\!\Bigr)\ ,\]
and, in $\ovr_{\un{2}}\bigl(\Seq(\bT)\bigr)$, the substrings $(\gamma_1,\gamma_1q^{-2},\gamma_1q^{-6},\gamma_1,\gamma_1q^{-4})$ and $(\gamma_1,\gamma_1q^{-2},\gamma_1q^{-6},\gamma_1)$ do not satisfy Condition $(\star)$ of Proposition \ref{prop-tab-A} and therefore do not correspond to any standard tableau.

\paragraph{\textbf{(d) $R=A_3$ and $\sv=2$ in the standard labelling.}} In the following examples, we exhibit phenomenons which were obviously not present in the classical situation of $GL_{n+1}$ (example \textbf{(a)}). Namely, we first show in an example that the values of the places have an influence on the size or the admissibility of a truncated orbit. In a second example, we show an admissible standard tableau $\bT$ such that $r_i(\bT)=r_j(\bT)$ for different $i,j$.

The new labelling of the vertices of the Dynkin diagram is:
\begin{center}
\setlength{\unitlength}{0.01cm}
\begin{picture}(1550,50)(-100,1160)
\put(500,1200){\circle*{15}}
\put(490,1160){\scriptsize{$\un{2}$}}
\put(500,1200){\line(1,0){150}}
\put(650,1200){\circle*{15}}
\put(640,1160){\scriptsize{$1$}}
\put(650,1200){\line(1,0){150}}
\put(800,1200){\circle*{15}}
\put(790,1160){\scriptsize{$2$}}
\end{picture}
\end{center}
We take two places $\gamma_1,\gamma_2\in\C(q)^{\times}$ so that, by definition, $\gamma_2\neq\gamma_1q^{2a}$ for any $a\in\Z$. Consider the sequence $\bS=\Bigl(\gamma_1,\gamma_1q^{-2},\begin{array}{c}\gamma_2 \\[0.2em]
\gamma_2 
\end{array}\!\Bigr)$ (it corresponds to a standard tableau of type $(R,\sv)$ shown below). We have:

\[\bS=\Bigl(\gamma_1,\gamma_1q^{-2},\begin{array}{c}\gamma_2 \\[0.2em]
\gamma_2 
\end{array}\!\Bigr)\ \ \stackrel{\ \ \ovr_{2_{}}\ \,}{\longleftrightarrow}\ \ \Bigl(\gamma_1,\gamma_2,\begin{array}{c}\gamma_1q^{-2} \\[0.2em]
\gamma_1^{-1}\gamma_2^2q^2
\end{array}\!\Bigr)\]
Assume first that $\gamma_1^{-1}\gamma_2^2\notin\{\gamma_1q^{2a},\gamma_2 q^{2a}\}_{a\in\Z}$, which is equivalent, by assumption on $\gamma_1,\gamma_2$, to $\gamma_2\neq\pm\gamma_1q^a$ for any $a\in\Z$. Then, a new place is created, and a graphical representation is:
\[
\left(\Biggl\{\!\!\begin{array}{c}
\gamma_1 \\[0.5em]
\begin{array}{c}
\fbox{\scriptsize{$0$}}  \\[-0.2em]
\fbox{\scriptsize{$1$}}\end{array}
\end{array} ,
\begin{array}{c}
\gamma_2 \\[0.5em]
\begin{array}{c}
\fbox{\scriptsize{$2$}}\\[-0.2em]
\phantom{\fbox{\scriptsize{$2$}}}  \end{array}
\end{array}\!\!\Biggr\}\ ,\ 
\Biggl\{\!\!\begin{array}{c}
\gamma_1 \\[0.5em]
\begin{array}{c}
\fbox{\scriptsize{$0$}}  \\[-0.2em]
\fbox{\scriptsize{$1$}}\end{array}
\end{array} ,
\begin{array}{c}
\gamma_2 \\[0.5em]
\begin{array}{c}
\fbox{\scriptsize{$\un{2}$}}\\[-0.2em]
\  \end{array}
\end{array}\!\!\Biggr\}
\right)
\stackrel{\ \ \ovr_{2_{}}\ \,}{\longleftrightarrow}
\left(\Biggl\{\!\!\begin{array}{c}
\gamma_1 \\[0.5em]
\begin{array}{c}
\fbox{\scriptsize{$0$}}  \\[-0.2em]
\fbox{\scriptsize{$2$}}\end{array}
\end{array} ,
\begin{array}{c}
\gamma_2 \\[0.5em]
\begin{array}{c}
\fbox{\scriptsize{$1$}}\\[-0.2em]
\phantom{\fbox{\scriptsize{$1$}}}  \end{array}
\end{array}\!\!\Biggr\}\ ,\ 
\Biggl\{\!\!\begin{array}{c}
\gamma_1 \\[0.5em]
\begin{array}{c}
\fbox{\scriptsize{$0$}}  \\[-0.2em]
\ \end{array}
\end{array} ,
\begin{array}{c}
\gamma_2 \\[0.5em]
\begin{array}{c}
\fbox{\scriptsize{$1$}}\\[-0.2em]
\phantom{\fbox{\scriptsize{$1$}}}  \end{array}
\end{array} ,
\begin{array}{c}
\gamma_1^{-1}\gamma_2^2 \\[0.5em]
\begin{array}{c}
\scriptstyle{\times}\ \fbox{\scriptsize{$\un{2}$}}\\[-0.2em]
\  \end{array}
\end{array}\!\!\Biggr\}
\right)
\]
In this case, it can be checked that this is part of an admissible truncated orbit containing $12$ elements.

Now assume that $\gamma_2=\pm\gamma_1q^a$ for some $a\in\Z$ ($a$ is odd if $\gamma_2=\gamma_1q^{a}$). Then $\gamma_1^{-1}\gamma_2^2=\gamma_1q^{2a}$ and therefore, in this situation, no new place is created. Instead, the placed nodes with $\un{2}$ has to move in the diagram with place $\gamma_1$. Below, we only show how the second tableau (with numbers 0,1,$\un{2}$) is transformed by $\ovr_{2}$ in the situation, respectively, $a=-2$, $a=-1$, $a=0$ and $a=1$.
\[\Biggl\{\!\!\begin{array}{c}
\gamma_1 \\[0.5em]
\begin{array}{c}
\fbox{\scriptsize{$0$}}  \\[-0.2em]
\fbox{\scriptsize{$\un{2}$}}\end{array}
\end{array} ,
\begin{array}{c}
\gamma_2 \\[0.5em]
\begin{array}{c}
\fbox{\scriptsize{$1$}} \\[-0.2em]
\phantom{\fbox{\scriptsize{$1$}}}  \end{array}
\end{array}\!\!\Biggr\}
\ ,\ \ \ \ 
\Biggl\{\!\!\begin{array}{c}
\gamma_1 \\[0.5em]
\begin{array}{cc}
\fbox{\scriptsize{$0$}} & \hspace{-0.35cm}\cdot \\[-0.2em]
\cdot & \hspace{-0.35cm}\fbox{\scriptsize{$\un{2}$}}\end{array}
\end{array} ,
\begin{array}{c}
\gamma_2 \\[0.5em]
\begin{array}{c}
\fbox{\scriptsize{$1$}} \\[-0.2em]
\phantom{\fbox{\scriptsize{$1$}}}  \end{array}
\end{array}\!\!\Biggr\}
\ ,\ \ \ \ 
\Biggl\{\!\!\begin{array}{c}
\gamma_1 \\[0.5em]
\begin{array}{cc}
\fbox{\scriptsize{$0$}} & \hspace{-0.35cm}\fbox{\scriptsize{$\un{2}$}}  \\[-0.2em]
& \end{array}
\end{array} ,
\begin{array}{c}
\gamma_2 \\[0.5em]
\begin{array}{c}
\fbox{\scriptsize{$1$}} \\[-0.2em]
\phantom{\fbox{\scriptsize{$1$}}}  \end{array}
\end{array}\!\!\Biggr\}
\ ,\ \ \ \ 
\Biggl\{\!\!\begin{array}{c}
\gamma_1 \\[0.5em]
\begin{array}{ccc}
\fbox{\scriptsize{$0$}} & \hspace{-0.35cm}\cdot & \hspace{-0.35cm}\fbox{\scriptsize{$\un{2}$}}  \\[-0.2em]
& &\end{array}
\end{array} ,
\begin{array}{c}
\gamma_2 \\[0.5em]
\begin{array}{c}
\fbox{\scriptsize{$1$}} \\[-0.2em]
\phantom{\fbox{\scriptsize{$1$}}}  \end{array}
\end{array}\!\!\Biggr\}\]
We see at once that we have a non-admissible situation for $a=-1$. It can be checked, as before, that for any other values of $a$, we have an admissible truncated orbit. However, now, the truncated orbit still contains 12 elements if and only if $a\neq -2,0$. If $a=0$ then the truncated orbit contains 6 elements while if $a=-2$ the truncated orbit only contains $3$ elements. 

Indeed, we have in general $\ovr_{\un{2}}(\bS)=\Bigl(\gamma_1,\gamma_2,\begin{array}{c}\gamma_1^{-1}\gamma_2^2q^2 \\[0.2em]
\gamma_1q^{-2}
\end{array}\!\Bigr)$ and so it coincides with $\ovr_2(\bS)$ if $a=-2$. We write the truncated orbit when $a=-2$, that is when $\gamma_2=-\gamma_1q^{-2}$ (The non-indicated actions of $\ovr_{1},\ovr_{2},\ovr_{\un{2}}$ give $\bze$):

\[\Bigl(\gamma_1,\gamma_1q^{-2},\begin{array}{c}-\gamma_1q^{-2} \\[0.2em]
-\gamma_1q^{-2} 
\end{array}\!\Bigr)\ \ \stackrel{\ovr_{2_{}}\,,\,\ovr_{\un{2}_{}}}{\longleftrightarrow}\ \ \Bigl(\gamma_1,-\gamma_1q^{-2},\begin{array}{c}\gamma_1q^{-2} \\[0.2em]
\gamma_1q^{-2}
\end{array}\!\Bigr)\ \ \stackrel{\ovr_{1_{}}}{\longleftrightarrow}\ \ \Bigl(-\gamma_1q^{-2},\gamma_1,\begin{array}{c}\gamma_1q^{-2} \\[0.2em]
\gamma_1q^{-2}
\end{array}\!\Bigr)\]

\section{Construction of representations of $H(R,\sv)$ and $\oH(R,\sv)$}\label{sec-rep}

\subsection{Representations of $H(R,\sv)$}\label{subsec-rep}

We recall that the generators of the affine Hecke algebra $H(R,\sv)$ are $g_i$, $i\in\Ve_R$, and $X_j^{\pm1}$, $j\in\Ve_R\cup\{0\}$.

Let $\oO$ be an admissible truncated orbit in $\Seq_{R,\sv}$. By Definition \ref{def-adm-tab} of admissibility, the orbit $\oO$ consists only of elements of the form $\Seq(\bT)$ with $\bT\in\STab(R,\sv)$; we identify $\bT$ and $\Seq(\bT)$. 

Let $V_{\oO}$ be a $\C(q)$-vector space with basis $\{v_{\bT}\}_{\bT\in\oO}$ and set $v_{\bze}:=0$. We define linear operators $g_i$, $i\in\Ve_R$, and $X_j^{\pm1}$, $j\in\Ve_R\cup\{0\}$, on $V_{\oO}$ by setting, for any $\bT\in\oO$,
\begin{equation}\label{rep-X}
X_{j}(v_{\bT})=\qc_j(\bT)\,v_{\bT}\ ,\ \ \ \qquad\text{for $j\in\Ve_R\cup\{0\}$;}
\end{equation}
\begin{equation}\label{rep-g}
g_{i}(v_{\bT})=\frac{(q-q^{-1})\qc_{i}(\bT)}{\qc_{i}(\bT)-\qc_{i-1}(\bT)}\,v_{\bT}+\frac{q\,\qc_{i}(\bT)-q^{-1}\qc_{i-1}(\bT)}{\qc_{i}(\bT)-\qc_{i-1}(\bT)}\,v_{\ovr_{i}(\bT)}\ ,\ \ \ \quad\text{for $i\in\Ve_R$.}
\end{equation}
Note that $\qc_{i}(\bT)\neq\qc_{i-1}(\bT)$ for any $\bT\in\STab(R,\sv)$.
\begin{proposition}\label{prop-rep}
Formulas (\ref{rep-X}) and (\ref{rep-g}) define a representation of $\C(q)H(R,\sv)$ on $V_{\oO}$.
\end{proposition}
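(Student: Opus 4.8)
The plan is to verify directly that the operators defined by (\ref{rep-X}) and (\ref{rep-g}) satisfy the defining relations of $H(R,\sv)$ as made explicit in Section \ref{subsec-H}: the relations (\ref{rel-H0}) among the $g_i$, the commutativity (\ref{rel-H-XX}) of the $X_j$, and the mixed relations (\ref{rel-H-gX1})--(\ref{rel-H-gX5}). Commutativity of the $X_j$ is immediate since they are simultaneously diagonal in the basis $\{v_{\bT}\}$. The rest splits into two conceptually different parts: the relations internal to the three subalgebras $\cH_u\cong H(GL_{l_u+1})$, and the mixed relations that glue them.

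For the relations living inside a single $\cH_u$, the key observation is that the operators $g_a,X_a$ restricted to the substring data $\bS_u$ are exactly the classical seminormal operators of $H(GL_{l_u+1})$: indeed $\qc_a(\bT)=\qc_a(\bT_u)$ for the relevant indices, and the truncated operator $\ovr_a$ acts on the substring $\bS_u$ precisely as the truncated $GL$-operator (it kills $r_a$ iff $\qc_a=q^{\pm2}\qc_{a-1}$). So I would invoke the known fact (this is exactly the content of the construction in \cite{Ra} specialised to shapes, cf. Theorem \ref{theo-GL}) that these classical formulas define a representation of $H(GL_{l_u+1})$; this handles (\ref{rel-H0}) for pairs $i,j$ in the same parabolic $\cP_u$, together with all instances of (\ref{rel-H-gX1}) and the commutation cases of (\ref{rel-H-gX5}) with $j\in\{i,i-1\}$ excluded inside one branch. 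One still has to check (\ref{rel-H0}) when $i,j$ lie in different branches: here either $m_{ij}=2$, which follows from the fact that $g_i$ and $g_j$ act on disjoint sets of coordinates (their supports in a sequence $\bS$ do not interact, by the array picture (\ref{string-array})), or $m_{ij}=3$, which only occurs when $i,j\in\{2,\un 2\}$, $\{k{+}1,\unn{k{+}1}\}$ or (if $k=1$) $\{\un 2,\unn 2\}$ with $1$ the common neighbour — and here the braid relation is the delicate point.

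The main obstacle is precisely these ``trivalent'' braid relations of length $3$ together with the genuinely mixed $gX$-relations (\ref{rel-H-gX2})--(\ref{rel-H-gX4}), where $g_i$ does not merely permute two contents but translates a whole branch. For the mixed $gX$-relations the computation is a direct but careful evaluation: apply $g_i X_j g_i$ to $v_{\bT}$ using (\ref{rep-X}) and (\ref{rep-g}), and check the resulting scalar equals $\qc_j(\bT)\cdot\qc_?( \cdot )/\cdots$ as dictated by the action (\ref{act-seq1})--(\ref{act-seq4}) of $r_i$ on sequences; the identity $\qc_j(r_i(\bT))=\qc_j(\bT)\,\qc_i(\bT)/\qc_{i-1}(\bT)$ for $j$ beyond the ramification, which is exactly how (\ref{act-W2})--(\ref{act-W4}) were set up, makes this work, and one must treat separately the case $\ovr_i(\bT)=\bze$ (where $g_i$ acts by the scalar $q$ or $-q^{-1}$, using that $\qc_i=q^{\pm2}\qc_{i-1}$) so that the relation still holds. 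For the braid relation $g_ig_jg_i=g_jg_ig_j$ with $m_{ij}=3$, the cleanest route is to reduce to the $GL_3$-type situation: the only coordinates of $\bT$ actually moved or scaled by $g_i,g_j$ are $\qc_1(\bT),\qc_i(\bT),\qc_j(\bT)$ together with the branches hanging off them, and the scaling factors $\qc_i/\qc_1$, $\qc_j/\qc_1$ transform under $g_i,g_j$ exactly as the $GL_3$ Jucys--Murphy ratios do; then Corollary \ref{cor-mon} (the braid relations for the truncated operators $\ovr_i$) guarantees the vectors $v_{\bT}$ appearing on both sides match up, and a three-term expansion of both sides, grouping by which $v_{\bT'}$ they hit, reduces the scalar identities to the already-known $GL_3$ case. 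Admissibility of $\oO$ is used throughout to ensure that whenever $\ovr_i(\bT)\neq\bze$ the target is again a genuine standard tableau of type $(R,\sv)$, so that $v_{\ovr_i(\bT)}$ is a well-defined basis vector; this is what makes the operators $g_i$ preserve $V_{\oO}$. I would present the in-branch relations and commutativity briefly, then devote the bulk of the proof to the explicit scalar verifications of (\ref{rel-H-gX2})--(\ref{rel-H-gX4}) and of the length-$3$ braid relation in the trivalent case.
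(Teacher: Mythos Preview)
Your overall strategy matches the paper's: reduce the in-branch relations to the known $GL$ case and handle the rest separately. But there is a misreading of the Dynkin diagram that makes you focus on a nonexistent case while dismissing the actual subtle one.

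The pairs you list as having $m_{ij}=3$ --- namely $\{2,\un{2}\}$, $\{k{+}1,\unn{k{+}1}\}$, and (if $k=1$) $\{\un{2},\unn{2}\}$ --- are \emph{not} adjacent in the Dynkin diagram of Figure~\ref{Cox-label}: each such pair consists of two vertices sharing a common neighbour ($1$ or $k$) but not themselves connected, so $m_{ij}=2$. In fact every adjacent pair $i,j$ (hence every pair with $m_{ij}=3$) lies in at least one common parabolic $\cP_u$, so \emph{all} length-$3$ braid relations are already covered by the $GL$ subalgebras $\cH_u$. There is no ``trivalent braid'' to check separately, and the machinery you propose for it is not needed.

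Conversely, the case you dismiss as easy --- $m_{ij}=2$ with $i,j$ in different branches --- is where the actual care is required. Your claim that such $g_i,g_j$ ``act on disjoint sets of coordinates'' is false: for instance both $g_2$ and $g_{\un{2}}$ involve $\qc_1(\bT)$ (since $\un{2}-1=\un{1}=1$), and by (\ref{act-seq1})--(\ref{act-seq2}) the reflection $r_{\un{2}}$ rescales $\qc_2$ while $r_2$ rescales $\qc_{\un{2}}$. What saves the commutation is that only the \emph{ratio} $\qc_i(\bT)/\qc_{i-1}(\bT)$ enters the coefficients in (\ref{rep-g}), and Lemma~\ref{lem-mon} (equation (\ref{lem-cont})) says precisely that this ratio is invariant under $r_j$ when $m_{ij}=2$. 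The paper's Step~2 carries out the direct computation using this lemma together with Proposition~\ref{prop-mon} (to match $\ovr_i\ovr_j(\bT)$ with $\ovr_j\ovr_i(\bT)$).

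For the mixed $gX$-relations, your case-by-case plan for (\ref{rel-H-gX2})--(\ref{rel-H-gX5}) would work, but the paper streamlines it by verifying the single Lusztig relation (\ref{rel-Lu}) uniformly for all $i\in\Ve_R$ and all basis vectors $\delta_j$, using that $X^{r_i(\delta_j)}$ acts on $v_{\bT}$ by the scalar $\qc_j\bigl(r_i(\bT)\bigr)$; this absorbs all of (\ref{rel-H-gX1})--(\ref{rel-H-gX5}) in one short computation.
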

\begin{proof} We shall check that the defining relations of the algebra $H(R,\sv)$ are satisfied by the operators $g_i$, $i\in\Ve_R$, and $X_j^{\pm1}$, $j\in\Ve_R\cup\{0\}$, on $V_{\oO}$. First note that the commuting relation $X_iX_j=X_jX_i$ for any $i,j\in\Ve_R\cup\{0\}$ is obviously satisfied. Note also that the operators $X_j$, $j\in\Ve_R\cup\{0\}$, are invertible since $\qc_j(\bT)\neq 0$ for any $\bT\in\STab(R,\sv)$. We organize the remaining verifications in three steps. To save space, we denote during the proof $\qc^{\bT}_j:=\qc_j(\bT)$ for any $\bT\in\oO$ and any $j\in\Ve_R\cup\{0\}$.

\textbf{1.} Recall the subalgebras $\cH_u$, $u=1,2,3$, of $H(R,\sv)$ defined in Section \ref{subsec-GL} which are isomorphic to $H(GL_N)$ for some $N$. We consider first a defining relation of $H(R,\sv)$ involving only generators lying inside one of these subalgebras $\cH_u$. Let $\bT\in\oO$. Then Formulas (\ref{rep-X})-(\ref{rep-g}) only involve the contents of the standard tableaux $\bT_u$ inside $\bT$ and moreover, they coincide with the known formulas for the representations of $H(GL_n)$ given in \cite{Ra}. We conclude that we already know that such defining relations are satisfied on $V_{\oO}$. In particular, this takes care of the relations $g_i^2=(q-q^{-1})g_i+1$ for $i\in\Ve_R$, and $g_ig_jg_i=g_jg_ig_j$ for $i,j\in\Ve_R$ such that $m_{i,j}=3$. Therefore we do not repeat the calculations necessary to check these relations (instead, we refer to \cite{Ra}; see also \cite{Ho,AK} for similar calculations).

\textbf{2.} Now we will check the defining relations involving only generators $g_i$, $i\in\Ve_R$. Due to step \textbf{1}, it is enough to check the ones which are not already relations inside one of the subalgebras $\cH_u$, $u=1,2,3$. In particular, only some defining relations of the form $g_ig_j=g_jg_i$ remain to be checked. We are going to check all defining relations of the form $g_ig_j=g_jg_i$.

Let $i,j\in\Ve_R$ such that $m_{i,j}=2$, and let $\bT\in\oO$. A direct calculation gives:
\[\begin{array}{ll}
g_jg_i(v_{\bT})\ = & \displaystyle\frac{(q-q^{-1})\qc_{i}^{\bT}}{\qc_{i}^{\bT}-\qc_{i-1}^{\bT}}\,
\Biggl(\,\frac{(q-q^{-1})\qc_{j}^{\bT}}{\qc_{j}^{\bT}-\qc_{j-1}^{\bT}}\cdot v_{\bT}\ +\ \frac{q\,\qc_{j}^{\bT}-q^{-1}\qc_{j-1}^{\bT}}{\qc_{j}^{\bT}-\qc_{j-1}^{\bT}}\cdot v_{\ovr_{j}(\bT)}\,\Biggr)
\\[1.8em]
& \displaystyle \hspace{-1.5cm}+\ \ \frac{q\,\qc_{i}^{\bT}-q^{-1}\qc_{i-1}^{\bT}}{\qc_{i}^{\bT}-\qc_{i-1}^{\bT}}\,
\Biggl(\,\frac{(q-q^{-1})\qc_{j}^{r_i(\bT)}}{\qc_{j}^{r_i(\bT)}-\qc_{j-1}^{r_i(\bT)}}\cdot v_{\ovr_i(\bT)}\ +\ \frac{q\,\qc_{j}^{r_i(\bT)}-q^{-1}\qc_{j-1}^{r_i(\bT)}}{\qc_{j}^{r_i(\bT)}-\qc_{j-1}^{r_i(\bT)}}\cdot v_{\ovr_j\ovr_{i}(\bT)}\,\Biggr)\ .
\end{array} \]
Note that this equation is always valid since, first, if $\ovr_i(\bT)=\bze$ then $\ovr_j\ovr_i(\bT)=\bze$ as well and the second line above is in this case equal to $0$; and second, thanks to Lemma \ref{lem-mon}, we have 
\begin{equation}\label{rep-eq2}
\frac{\qc_{j-1}^{r_i(\bT)}}{\qc_{j}^{r_i(\bT)}}=\frac{\qc_{j-1}^{\bT}}{\qc_{j}^{\bT}}\ ,
\end{equation}
since $m_{i,j}=2$ (in particular, the denominators are different from $0$). Using (\ref{rep-eq2}) in the above formula for $g_jg_i(v_{\bT})$, we can replace each $\qc_{j}^{r_i(\bT)}$ by $ \qc_{j}^{\bT}$ and each $\qc_{j-1}^{r_i(\bT)}$ by $ \qc_{j-1}^{\bT}$. Moreover, from Proposition \ref{prop-mon}, we have $\ovr_j\ovr_{i}(\bT)=\ovr_i\ovr_{j}(\bT)$. By exchanging $i$ and $j$, it is now immediate that $g_jg_i(v_{\bT})=g_ig_j(v_{\bT})$.

\textbf{3.} It remains to check the defining relation of $H(R,\sv)$ involving both generators $g_i$ and $X_j$. They are given in general by Relations (\ref{rel-Lu}). Due to step \textbf{1}, it would be enough to check the ones which are not relations inside one of the subalgebras $\cH_u$, $u=1,2,3$. Nevertheless, we will check them in a uniform way by proving that the following relation is satisfied on $V_{\oO}$, for $i\in\Ve_R$ and $j\in\Ve_R\cup\{0\}$,
\begin{equation}\label{rep-eq1}
g_iX_{j}-X^{r_i(\delta_{j})}g_i=(q-q^{-1})\frac{X_{j}-X^{r_i(\delta_{j})}}{1-X^{-\alpha_i}}\ .
\end{equation}
First, recall that, by definition of the action of the Weyl group on sequences (Section \ref{sec-act}), we have $X^{r_i(\delta_{j})}(v_{\bT})=\qc_j^{r_i(\bT)}v_{\bT}$ for any $\bT\in\oO$. Then recall that $\alpha_i=\delta_i-\delta_{i-1}$ for any $i\in\Ve_R$ and thus that $X^{-\alpha_i}=X_{i-1}X_i^{-1}$.

Let $\bT\in\oO$. We apply the right hand side of (\ref{rep-eq1}) on $v_{\bT}$ and find:
\[(q-q^{-1})\frac{X_{j}-X^{r_i(\delta_{j})}}{1-X^{-\alpha_i}}(v_{\bT})\ =\ (q-q^{-1})\,\qc_i^{\bT}\,\frac{\qc_{j}^{\bT}-\qc_{j}^{r_i(\bT)}}{\qc_i^{\bT}-\qc_{i-1}^{\bT}}\cdot v_{\bT}\ .\]
On the other hand, we have
\[g_iX_{j}(v_{\bT})\ =\ \qc_{j}^{\bT}\cdot\Bigl(\,\frac{(q-q^{-1})\qc_{i}^{\bT}}{\qc_{i}^{\bT}-\qc_{i-1}^{\bT}}\cdot v_{\bT}\ +\ \frac{q\,\qc_{i}^{\bT}-q^{-1}\qc_{i-1}^{\bT}}{\qc_{i}^{\bT}-\qc_{i-1}^{\bT}}\cdot v_{\ovr_{i}(\bT)}\,\Bigr)\ ,\]
while, 
\[X^{r_i(\delta_{j})}g_i(v_{\bT})\ =\ \frac{(q-q^{-1})\qc_{i}^{\bT}}{\qc_{i}^{\bT}-\qc_{i-1}^{\bT}}\,\qc_{j}^{r_i(\bT)}\cdot v_{\bT}\ +\ \frac{q\,\qc_{i}^{\bT}-q^{-1}\qc_{i-1}^{\bT}}{\qc_{i}^{\bT}-\qc_{i-1}^{\bT}}\,\qc_{j}^{r_i^2(\bT)}\cdot v_{\ovr_{i}(\bT)}\ .\]
We have $r_i^2(\bT)=\bT$ and then, putting together the last three calculations, we check easily that Relation (\ref{rep-eq1}) applied on $v_{\bT}$ is satisfied.
\end{proof}

\subsection{Calibrated spectrum of $H(R,\sv)$}\label{subsec-cal}

We use the ordered basis 
\[\cB_{R,\sv}:=\left(\delta_{0},\delta_{1},\dots,\dots,\delta_{l},,\delta_{\un{2}},\dots,\delta_{\un{l'}},\delta_{\unn{k+1}},\dots,\delta_{\unn{l''}}\right)\ .\]
of the free $\Z$-module $L_{R,\sv}$ to identify the calibrated spectrum $\text{C-Spec}\bigl(H(R,\sv)\bigr)$ with the set $\text{C-Eig}(\cB_{R,\sv})$, as explained in Section \ref{subsec-prel3}. 

We recall that $X_j:=X^{\delta_j}$ and that an element of $\text{C-Eig}(\cB_{R,\sv})$ is a sequence of eigenvalues for these generators of $H(R,\sv)$. Therefore an element of $\text{C-Eig}(\cB_{R,\sv})$ is naturally a sequence $\bS\in\Seq_{R,\sv}$ such that there is a calibrated representation $V$ of $H(R,\sv)$ and a non-zero vector $v\in V$ satisfying 
\begin{equation}\label{eq-proof1}
X_{j}(v_{\bS})=\qc_j(\bS)\,v_{\bS}\ ,\ \ \ \qquad\text{for $j\in\Ve_R\cup\{0\}$\,.}
\end{equation}

\begin{theorem}\label{theo-spec}
The calibrated spectrum of $H(R,\sv)$ is in bijection with the set $\ASTab(R,\sv)$ of admissible standard tableaux of type $(R,\sv)$. More precisely, we have
\begin{equation}\label{spec-H}
\text{C-Eig}(\cB_{R,\sv})=\{\Seq(\bT)\ |\ \bT\in\ASTab(R,\sv)\}\ .
\end{equation}
\end{theorem}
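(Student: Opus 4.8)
The plan is to prove the two inclusions in (\ref{spec-H}) separately. The inclusion $\supseteq$ is already essentially done: for any admissible truncated orbit $\oO$, Proposition \ref{prop-rep} produces a representation $V_\oO$ of $\C(q)H(R,\sv)$ on which the generators $X_j$ act diagonally (Formula (\ref{rep-X})), so $V_\oO$ is calibrated, and for each $\bT\in\oO$ the basis vector $v_\bT$ realises $\Seq(\bT)$ as an element of $\text{C-Eig}(\cB_{R,\sv})$ via (\ref{eq-proof1}). Since every admissible $\bT\in\ASTab(R,\sv)$ lies in its own admissible truncated orbit $\oO_\bT$, this gives $\{\Seq(\bT)\mid\bT\in\ASTab(R,\sv)\}\subseteq\text{C-Eig}(\cB_{R,\sv})$.

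For the reverse inclusion $\subseteq$, I would take $\bS\in\text{C-Eig}(\cB_{R,\sv})$, witnessed by a calibrated representation $V$ and a nonzero weight vector $v=v_\bS$ as in (\ref{eq-proof1}), and argue that $\bS$ is the sequence of contents of an admissible standard tableau of type $(R,\sv)$. The first step is to show $\bS$ is the sequence of contents of a standard tableau of type $(R,\sv)$ at all, i.e.\ that each of the three substrings $\bS_1,\bS_2,\bS_3$ from (\ref{substrings}) satisfies Condition $(\star)$ of Proposition \ref{prop-tab-A}. This is where the subalgebras $\cH_u\cong H(GL_{l_u+1})$ of Section \ref{subsec-GL} enter: restricting $V$ to $\cH_u$ gives a calibrated representation of $H(GL_{l_u+1})$, and the weight of $v$ for the generators $X_0,X_1,\dots$ lying in $\cH_u$ is exactly $\bS_u$ (up to the relabelling in (\ref{iso-sub1})--(\ref{iso-sub3})). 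By Theorem \ref{theo-GL} (the $GL$ case), $\bS_u\in\text{C-Eig}(\cB_{A_{l_u},1})$ forces $\bS_u=\Seq(\bT_u)$ for a standard tableau $\bT_u$, hence $\bS_u$ satisfies $(\star)$. Applying this for $u=1,2,3$ and invoking the characterisation recalled just before Definition \ref{def-adm-tab}, we get $\bS=\Seq(\bT)$ for a unique $\bT\in\STab(R,\sv)$.

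The second step is admissibility: I must show that the whole truncated orbit $\oO_\bT=\oO_\bS$ consists of sequences of contents of standard tableaux of type $(R,\sv)$. Here the point is that, in a calibrated representation, the intertwining operators attached to the $g_i$ act among the weight spaces, and the braid-relation machinery of Corollary \ref{cor-mon} (together with the structure of $g_i$ in (\ref{rep-g})) shows that whenever $\ovr_i(\bS')\neq\bze$ for a weight $\bS'$ occurring in $V$, the sequence $\ovr_i(\bS')=r_i(\bS')$ also occurs as a weight of $V$ (one checks the coefficient $q\,\qc_i-q^{-1}\qc_{i-1}$ in front of $v_{\ovr_i(\bT)}$ is nonzero precisely when $\ovr_i\ne\bze$, so the relevant weight space is nonzero). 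Iterating along a reduced word, every $\bS'\in\oO_\bS$ occurs as a weight of $V$; then the argument of the first step, applied to $\bS'$ in place of $\bS$, shows each such $\bS'$ is $\Seq(\bT')$ for some $\bT'\in\STab(R,\sv)$. Hence $\bT$ is admissible and $\bS\in\{\Seq(\bT)\mid\bT\in\ASTab(R,\sv)\}$, completing the proof; injectivity of $\bT\mapsto\Seq(\bT)$ on $\STab(R,\sv)$ is Corollary \ref{cor-tab-A} applied componentwise.

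The main obstacle is the admissibility half of the $\subseteq$ inclusion, specifically verifying cleanly that every weight in the truncated orbit actually occurs in $V$: one needs that the off-diagonal term in (\ref{rep-g}) cannot vanish for the occurring weights, which ultimately reduces to the dichotomy in Definition \ref{def-ovr} (the coefficient vanishes exactly in the $\qc_i=q^{\pm2}\qc_{i-1}$ case, which is precisely when $\ovr_i$ returns $\bze$), combined with the standard fact that in a calibrated module a nonzero intertwiner image is nonzero. The $GL$-restriction step and the combinatorial bookkeeping of the three substrings are routine given Theorem \ref{theo-GL} and Proposition \ref{prop-tab-A}.
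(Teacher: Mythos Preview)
Your overall strategy matches the paper's: establish $\supseteq$ via the constructed representations $V_\oO$ (Proposition~\ref{prop-rep}), and for $\subseteq$ first restrict to the subalgebras $\cH_u\cong H(GL_{l_u+1})$ and apply Theorem~\ref{theo-GL} to get $\bS=\Seq(\bT)$ with $\bT\in\STab(R,\sv)$, then prove admissibility by propagating along the truncated orbit inside the given calibrated module $V$.

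There is, however, a circularity in your admissibility step. You twice invoke Formula~(\ref{rep-g}) and its ``off-diagonal coefficient $q\,\qc_i-q^{-1}\qc_{i-1}$'' to conclude that the weight $r_i(\bS')$ occurs in $V$. But (\ref{rep-g}) describes the action of $g_i$ on the \emph{constructed} module $V_\oO$, whose very definition presupposes that $\oO$ is admissible; it is not a formula valid on an arbitrary calibrated $V$. So as written, the argument assumes what it is trying to prove. The correct fix (and this is exactly what the paper does) works directly in $V$: given a weight vector $v_\bS$, set
\[
v'_\bS:=g_i(v_\bS)-\frac{(q-q^{-1})\,\qc_i(\bS)}{\qc_i(\bS)-\qc_{i-1}(\bS)}\,v_\bS\,,
\]
use the Lusztig relation~(\ref{rel-Lu}) to verify $X_j(v'_\bS)=\qc_j\bigl(r_i(\bS)\bigr)\,v'_\bS$ for all $j$, and use the quadratic relation $g_i^2=(q-q^{-1})g_i+1$ to see $v'_\bS\neq0$ whenever $\qc_i(\bS)\neq q^{\pm2}\qc_{i-1}(\bS)$ (otherwise $g_i$ would act on $v_\bS$ by a scalar not in $\{q,-q^{-1}\}$). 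This places $r_i(\bS)$ in $\text{C-Eig}(\cB_{R,\sv})$ directly, and iterating gives admissibility as you intended. Note also that Corollary~\ref{cor-mon} plays no role in this propagation argument beyond making the truncated orbit well-defined; the paper does not invoke it here.
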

\begin{proof} \textbf{1.} We first show that $\text{C-Eig}(\cB_{R,\sv})\subset\{\Seq(\bT)\ |\ \bT\in\STab(R,\sv)\}$. Let $\bS\in\Seq_{R,\sv}$ with substrings $\bS_1,\bS_2,\bS_3$ and assume that $\bS\in\text{C-Eig}(\cB_{R,\sv})$. Let $V$ be a calibrated $\C(q)H(R,\sv)$-module and $v_{\bS}\in V$ a non-zero vector such that (\ref{eq-proof1}) is satisfied.

Let $u\in\{1,2,3\}$ and consider the subalgebra $\C(q)\cH_u$, which is isomorphic to $\C(q)H(GL_{l_u+1})$, where $l_1:=l$, $l_2:=l'$ and $l_3:=l''$; see Section \ref{subsec-GL}.  We consider $V$ as a module for $\C(q)\cH_u$. The module $V$ is calibrated as a $\C(q)\cH_u$-module. We now use Theorem \ref{theo-GL}, where the calibrated spectrum of $\C(q)\cH_u$ is described. 

Equation (\ref{eq-proof1}) restricted to generators $X_j$ in $\cH_u$ involves only the substring $\bS_u$ of $\bS$. It follows immediately from the inclusion of $\cH_u$ in $H(R,\sv)$ and Theorem \ref{theo-GL} that there is a standard tableau $\bT_u\in\STab(l_u+1)$ such that $\Seq(\bT_u)=\bS_u$. 

We thus obtained a triplet $(\bT_1,\bT_2,\bT_3)$ satisfying the conditions of Definition \ref{def-tab-R}, due to the intersections of the subalgebras $\cH_u$. Moreover, $\bS_u=\Seq(\bT_u)$, $u=1,2,3$. We conclude that we have a standard tableau $\bT=(\bT_1,\bT_2,\bT_3)$ of type $(R,\sv)$ such that $\bS=\Seq(\bT)$.

\textbf{2.} We prove now that $\bT$ is admissible. To do so, we shall show that, for $i\in\Ve_R$ such that $\ovr_i(\bS)\neq\bze$, we have $r_i(\bS)\in\text{C-Eig}(\cB_{R,\sv})$. Indeed, due to the first part of the proof this will imply that $r_i(\bS)$ is the sequence of contents of a standard tableau of type $(R,\sv)$. In turn, this implies that the truncated orbit of $\bT$ is admissible (and so is $\bT$).

So, let $i\in\Ve_R$ such that $\ovr_i(\bS)\neq\bze$. For brevity, we set during the proof $\qc_j^{\bS}:=\qc_j(\bS)$ and $\qc_j^{r_i(\bS)}:=\qc_j\bigl(r_i(\bS)\bigr)$ for $j\in\Ve_R\cup\{0\}$. Let $v'_{\bS}$ be the following vector of $V$:
\begin{equation}\label{eq-proof1a}
v'_{\bS}:=g_{i}(v_{\bS})-\frac{(q-q^{-1})\qc_i^{\bS}}{\qc_i^{\bS}-\qc_{i-1}^{\bS}}\,v_{\bS}\ .
\end{equation}
First, we have $v'_{\bS}\neq 0$. Indeed, by the assumption $\ovr_i(\bS)\neq\bze$, we have $\qc_i^{\bS}\neq q^{\pm2}\qc_{i-1}^{\bS}$ and therefore $\displaystyle\frac{(q-q^{-1})\qc_i^{\bS}}{\qc_i^{\bS}-\qc_{i-1}^{\bS}}\notin\{q,-q^{-1}\}$. So $v'_{\bS}= 0$ would contradict the relation $g_i^2=1+(q-q^{-1})g_i$.

Now, in order to show that $r_i(\bS)$ is indeed in $\text{C-Eig}(\cB_{R,\sv})$, we are going to prove that:
\begin{equation}\label{eq-proof1b}
X_{j}(v'_{\bS})=\qc_j^{r_i(\bS)}\,v'_{\bS}\ ,\ \ \ \qquad\text{for $j\in\Ve_R\cup\{0\}$\,.}
\end{equation}
Let $j\in\Ve_R\cup\{0\}$. First, recall that, by definition of the action of $W_0(R)$ on sequences (Section \ref{sec-act}), we have $X^{r_i(\delta_{j})}(v_{\bS})=\qc_j^{r_i(\bS)}v_{\bS}$, and also that $X^{-\alpha_i}=X_{i-1}X_i^{-1}$ since $\alpha_i=\delta_i-\delta_{i-1}$. We calculate, using the defining relation between $g_i$ and $X_j$,
\[\begin{array}{ll}
X_jg_i(v_{\bS}) & = g_iX^{r_i(\delta_j)}(v_{\bS})-(q-q^{-1})\displaystyle\frac{X^{r_i(\delta_j)}-X_j}{1-X^{-\alpha}}(v_{\bS})\\[1em]
& = \qc_j^{r_i(\bS)}g_i(v_{\bS}) - (q-q^{-1})\displaystyle\frac{(\qc_j^{r_i(\bS)}-\qc_j^{\bS})\qc_i^{\bS}}{\qc_i^{\bS}-\qc_{i-1}^{\bS}}v_{\bS} \\[1em]
& =  \qc_j^{r_i(\bS)}v'_{\bS} + (q-q^{-1})\displaystyle\frac{\qc_j^{\bS}\qc_i^{\bS}}{\qc_i^{\bS}-\qc_{i-1}^{\bS}}v_{\bS}\ .
\end{array}\]
Combining this calculation with Formula (\ref{eq-proof1a}) for $v'_{\bS}$, we obtain the desired formula (\ref{eq-proof1b}).

\textbf{3.} The first two steps show that $\text{C-Eig}(\cB_{R,\sv})\subset\{\Seq(\bT)\ |\ \bT\in\ASTab(R,\sv)\}$. The reverse inclusion is an immediate consequence of the construction of representations $V_{\oO}$, for admissible truncated orbits $\oO$ of type $(R,\sv)$, in Section \ref{subsec-rep}. Indeed, for any $\bT\in\ASTab(R,\sv)$, the defining formula (\ref{rep-X}) for the action of the generators $X_j$ shows that $V_{\oO_{\bT}}$ is calibrated and that $\Seq(\bT)\in \text{C-Eig}(\cB_{R,\sv})$.
\end{proof}

\subsection{Representations and calibrated spectrum of $\oH(R,\sv)$}

The algebra $\oH(R,\sv)$ was defined in Section \ref{subsec-quot}. We recall that we have a central element $C_{R,\sv}$ in $H(R,\sv)$:
\begin{equation}\label{form-C2}
C_{R,\sv}=X^{\Delta_{R,\sv}}=\prod_{j\in\Ve_R\cup\{0\}} X_j^{\kappa_j}\ ,
\end{equation}
where $\Delta_{R,\sv}$ is an element of the $\Z$-module $L_{R,\sv}$ invariant under the action of the Weyl group $W_0(R)$. The powers $\kappa_j$ are the coefficients of the expansion of $\Delta_{R,\sv}$ in the basis $\cB_{R,\sv}$. They are given in (\ref{form-Delta}).

Then the algebra $\oH(R,\sv)$ is the quotient of $H(R,\sv)$ over the relation $C_{R,\sv}=1$. We showed that in fact $\oH(R,\sv)$ is an affine Hecke algebra associated to the sublattice $Q_R+\Z\omega_{\sv}$ of the weight lattice.

\begin{definition}\label{delta-inv}
\textbf{(i)} Let $\bS\in\Seq_{R,\sv}$. We define:
\begin{equation}\label{Delta-bS}
\Delta(\bS):=\prod_{j\in\Ve_R\cup\{0\}} \qc_j(\bS)^{\kappa_j}\ .
\end{equation}
\noindent\textbf{(ii)} We define a subset of admissible standard tableaux of type $(R,\sv)$:
\[\overline{\ASTab(R,\sv)}:=\{\bT\in\ASTab(R,\sv)\ \text{such that}\ \Delta(\bT)=1\}\ .\]
\end{definition}
As we show in the proposition below, it follows directly from the definitions that $\Delta(\bS)$ is actually an invariant for the action of the Weyl group $W_0(R)$ on sequences.
\begin{proposition}\label{prop-delta-inv}
Let $\bS\in\Seq_{R,\sv}$. We have $\Delta\bigl(w(\bS)\bigr)=\Delta(\bS)$ for any $w\in W_0(R)$. In particular, the value $\Delta(\bS)$ is constant along the truncated orbits.
\end{proposition}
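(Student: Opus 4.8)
The plan is to recognise $\Delta(\bS)$ as the value of the character attached to $\bS$ on the Weyl-invariant element $\Delta_{R,\sv}$, and then to reduce the statement to Lemma \ref{lem-quot}(i). Recall from Section \ref{subsec-trunc} that the choice of the ordered basis $\cB_{R,\sv}=(\delta_j)_{j\in\Ve_R\cup\{0\}}$ identifies a sequence $\bS\in\Seq_{R,\sv}$ with a character $\chi_{\bS}\in\text{Hom}\bigl(L_{R,\sv},\C(q)^{\times}\bigr)$ via $\chi_{\bS}(X^{\delta_j})=\qc_j(\bS)$. Since $\chi_{\bS}$ is a group homomorphism and, by (\ref{form-Delta}), $\Delta_{R,\sv}=\sum_{j\in\Ve_R\cup\{0\}}\kappa_j\,\delta_j$, one gets immediately
\[
\Delta(\bS)=\prod_{j\in\Ve_R\cup\{0\}}\qc_j(\bS)^{\kappa_j}
=\prod_{j\in\Ve_R\cup\{0\}}\chi_{\bS}\bigl(X^{\delta_j}\bigr)^{\kappa_j}
=\chi_{\bS}\bigl(X^{\sum_j\kappa_j\delta_j}\bigr)
=\chi_{\bS}\bigl(X^{\Delta_{R,\sv}}\bigr)\ .
\]

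Next I would use the definition of the $W_0(R)$-action on sequences from Definition \ref{def-ovr}(i): for $w\in W_0(R)$, the sequence $w(\bS)$ corresponds to the character $w(\chi_{\bS})$, which by the formula recalled in Section \ref{subsec-trunc} satisfies $w(\chi_{\bS})\bigl(X^{x}\bigr)=\chi_{\bS}\bigl(X^{w^{-1}(x)}\bigr)$ for all $x\in L_{R,\sv}$. Applying this with $x=\Delta_{R,\sv}$ and combining with the displayed identity,
\[
\Delta\bigl(w(\bS)\bigr)=w(\chi_{\bS})\bigl(X^{\Delta_{R,\sv}}\bigr)=\chi_{\bS}\bigl(X^{w^{-1}(\Delta_{R,\sv})}\bigr)\ .
\]
By Lemma \ref{lem-quot}(i) the element $\Delta_{R,\sv}$ is fixed by $W_0(R)$, so $w^{-1}(\Delta_{R,\sv})=\Delta_{R,\sv}$ and hence $\Delta\bigl(w(\bS)\bigr)=\chi_{\bS}\bigl(X^{\Delta_{R,\sv}}\bigr)=\Delta(\bS)$, proving the first assertion. (One could equally do this on simple reflections only, reading off $r_i(\bS)$ from the explicit Formulas (\ref{act-seq1})--(\ref{act-seq4}) and checking directly that the product $\prod_j\qc_j(\bS)^{\kappa_j}$ is unchanged, using $\kappa_j=n_j-\sum_{j=i-1}n_i$; but the character-theoretic argument is cleaner and avoids the case analysis at the ramification vertices.)

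For the final sentence, I would invoke Remark \ref{rem-orb}(i): the truncated orbit $\oO_{\bS}$ is contained in the $W_0(R)$-orbit of $\bS$, so each element of $\oO_{\bS}$ is of the form $w(\bS)$ for some $w\in W_0(R)$ (indeed $\overline{w}(\bS)=w(\bS)$ whenever the former is not $\bze$, by (\ref{r-ovr})), and therefore $\Delta$ takes the constant value $\Delta(\bS)$ on $\oO_{\bS}$. There is no real obstacle here; the only thing to get right is the bookkeeping in the identification between sequences and characters and the direction of the $w^{-1}$ in the action on characters.
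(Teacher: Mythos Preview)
Your proof is correct and follows essentially the same route as the paper: both identify $\Delta(\bS)$ with $\chi_{\bS}(X^{\Delta_{R,\sv}})$, then use the definition of the $W_0(R)$-action on characters together with the $W_0(R)$-invariance of $\Delta_{R,\sv}$ from Lemma \ref{lem-quot}(i). Your write-up is slightly more detailed (spelling out the chain of equalities and the argument for the ``in particular'' via Remark \ref{rem-orb}(i) and (\ref{r-ovr})), but the underlying idea is identical.
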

\begin{proof}
Recall from Section \ref{subsec-trunc} that to $\bS\in\Seq_{R,\sv}$ corresponds a character $\chi_{\bS}\in\text{Hom}\bigl(L_{R,\sv},\C(q)^{\times}\bigr)$ of the Abelian group $L_{R,\sv}$ which is identified with the group formed by elements $X^x$, $x\in L_{R,\sv}$. Then, the definition of $\Delta(\bS)$ becomes
\[\Delta(\bS)=\chi_{\bS}(C_{R,\sv})=\chi_{\bS}(X^{\Delta_{R,\sv}})\ .\]
From the definition of the action of $W_0(R)$ on sequences, we have, for $w\in W_0(R)$,
\[\Delta\bigl(w(\bS)\bigr)=\chi_{\bS}(X^{w^{-1}(\Delta_{R,\sv})})\ .\]
The result then follows from the fact that $\Delta_{R,\sv}$ is invariant under the action of $W_0(R)$.
\end{proof}
Let $\oO$ be a truncated orbit in $\Seq_{R,\sv}$. The proposition allows to define:
\[\Delta(\oO):=\Delta(\bS)\ ,\ \ \ \ \quad\text{where $\bS\in\oO$}.\]
\begin{proposition}\label{prop-rep-quot}
\textbf{(i)} Let $\oO$ be an admissible truncated orbit of type $(R,\sv)$. Then $V_{\oO}$ passes through the quotient and becomes a representation of $\oH(R,\sv)$ if and only if $\Delta(\oO)=1$.

\textbf{(ii)} The calibrated spectrum of $\oH(R,\sv)$ is in bijection with $\overline{\ASTab(R,\sv)}$.
\end{proposition}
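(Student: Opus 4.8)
The plan is to prove (i) by a direct computation of the action of the central element $C_{R,\sv}$ on $V_{\oO}$, and to deduce (ii) by combining Theorem \ref{theo-spec} with the general result of Proposition \ref{prop-c-spec} on calibrated spectra of quotients.

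For part (i): by Corollary \ref{cor-quot}(ii), the algebra $\oH(R,\sv)$ is the quotient of $H(R,\sv)$ over the relation $C_{R,\sv}=1$, so a representation $V_{\oO}$ of $H(R,\sv)$ factors through $\oH(R,\sv)$ if and only if the central element $C_{R,\sv}$ acts as the identity on $V_{\oO}$. First I would use the expression (\ref{form-C2}), namely $C_{R,\sv}=\prod_{j\in\Ve_R\cup\{0\}}X_j^{\kappa_j}$, together with the defining formula (\ref{rep-X}), to compute that for every $\bT\in\oO$,
\[C_{R,\sv}(v_{\bT})=\prod_{j\in\Ve_R\cup\{0\}}\qc_j(\bT)^{\kappa_j}\,v_{\bT}=\Delta(\bT)\,v_{\bT}=\Delta(\oO)\,v_{\bT}\ ,\]
the last equality being Proposition \ref{prop-delta-inv}. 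Hence $C_{R,\sv}$ acts on $V_{\oO}$ as the scalar $\Delta(\oO)$; since $V_{\oO}\neq0$, this scalar equals $1$ if and only if $\Delta(\oO)=1$, which settles (i). When this holds, the resulting quotient representation is again calibrated, since the operators $X_j$ still act diagonally in the basis $\{v_{\bT}\}$.

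For part (ii): I would apply Proposition \ref{prop-c-spec} with $T=L_{R,\sv}$ and $I=\Z\Delta_{R,\sv}$, which is a legitimate quotient root datum because $\Delta_{R,\sv}$ is $W_0(R)$-invariant by Lemma \ref{lem-quot}(i) and $L_{R,\sv}/\Z\Delta_{R,\sv}$ is free by Lemma \ref{lem-quot}(ii). This identifies $\text{C-Spec}\bigl(\oH(R,\sv)\bigr)$ with the set of $\chi\in\text{C-Spec}\bigl(H(R,\sv)\bigr)$ satisfying $\chi(X^x)=1$ for all $x\in\Z\Delta_{R,\sv}$, equivalently $\chi(C_{R,\sv})=1$. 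Translating through the ordered basis $\cB_{R,\sv}$, a character $\chi=\chi_{\bS}$ satisfies $\chi_{\bS}(C_{R,\sv})=\chi_{\bS}(X^{\Delta_{R,\sv}})=\prod_{j}\qc_j(\bS)^{\kappa_j}=\Delta(\bS)$. Since Theorem \ref{theo-spec} gives $\text{C-Eig}(\cB_{R,\sv})=\{\Seq(\bT)\mid\bT\in\ASTab(R,\sv)\}$, the extra condition $\Delta\bigl(\Seq(\bT)\bigr)=\Delta(\bT)=1$ cuts out exactly the subset $\overline{\ASTab(R,\sv)}$, yielding the claimed bijection.

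There is no serious obstacle here: the statement is a formal consequence of the preceding results, and the only care required is bookkeeping — identifying the scalar by which $C_{R,\sv}$ acts with $\Delta(\oO)$, verifying that the hypotheses of Proposition \ref{prop-c-spec} are met for $I=\Z\Delta_{R,\sv}$, and checking that the identification of Theorem \ref{theo-spec} is compatible with the condition $\chi(C_{R,\sv})=1$.
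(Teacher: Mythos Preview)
Your proof is correct and follows essentially the same approach as the paper: part (i) is handled by computing $C_{R,\sv}(v_{\bT})=\Delta(\bT)v_{\bT}=\Delta(\oO)v_{\bT}$ from (\ref{rep-X}) and Proposition \ref{prop-delta-inv}, and part (ii) is deduced from Proposition \ref{prop-c-spec} together with Theorem \ref{theo-spec}. The paper's own proof is terser but identical in substance; your extra care in verifying the hypotheses of Proposition \ref{prop-c-spec} for $I=\Z\Delta_{R,\sv}$ is not strictly needed (this was already packaged into Corollary \ref{cor-quot}), but does no harm.
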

\begin{proof} Let $\oO$ be an admissible truncated orbit of type $(R,\sv)$. From the definitions and Formula (\ref{rep-X}) giving the action of the generators $X_j$ on $V_{\oO}$, it follows immediately that
\[C_{R,\sv}(v_{\bT})=\Delta(\bT)\,v_{\bT}=\Delta(\oO)\,v_ {\bT}\ ,\ \ \ \quad\text{for any $\bT\in\oO$.}\]
So the central element $C_{R,\sv}$ acts on $V_{\oO}$ by multiplication by $\Delta(\oO)$. Item \textbf{(i)} follows.

Item \textbf{(ii)} is a direct consequence of the general result stated in Proposition \ref{prop-c-spec}.
\end{proof}

\begin{example}\label{ex-C-GL2}
We consider again the example of the affine Hecke algebra $H(GL_{N+1})$; see Section \ref{subsec-GL}. We recall from Section \ref{subsec-ex} that in this situation, the admissible truncated orbits are parametrised by placed skew shapes of size $N+1$ and the notion of standard tableaux is the classical one. Moreover, we have seen in Example \ref{ex-C-GL} that we have $C_{A_N,1}=Y_0Y_1\dots Y_N$. Therefore, in this case, $\Delta(\bT)$ is simply the product of the contents of the placed nodes of a standard tableau $\bT$. Obviously here, $\Delta(\bT)$ only depends on the shape of $\bT$.

The quotient algebra $\oH(A_N,1)$ is the affine Hecke algebra associated to the weight lattice, that is, the affine Hecke algebra associated to $SL_{N+1}$. Propositions \ref{prop-rep} and \ref{prop-rep-quot} then provide a construction of representations of $\oH(A_N,1)$ associated to placed skew shapes $\bGamma$ of size $N+1$ such that the product of the contents of the placed nodes of $\bGamma$ is equal to 1.\hfill$\triangle$
\end{example}

\subsection{Minimal idempotents and irreducibility}
We recall that standard tableaux of type $(R,\sv)$ are characterized by their sequence of contents. Let $\oO$ be an admissible truncated orbit in $\Seq_{R,\sv}$ and let $\bT\in\oO$. We define the following elements of $\C(q)H(R,\sv)$:
\begin{equation}\label{def-ET}
E_{\bT}:=\prod_{j\in\Ve_R\cup\{0\}} \Bigl(\!\!\!\prod_{\text{\scriptsize{$\begin{array}{c}\bT'\in\oO\\\qc_j(\bT')\neq\qc_j(\bT)\end{array}$}}}\!\!\!\frac{X_j-\qc_j(\bT')}{\qc_j(\bT)-\qc_j(\bT')}\Bigr)\ .
\end{equation}
We have, on the representation $V_{\oO}$, that:
\[E_{\bT}(v_{\bT})=v_{\bT}\ \ \qquad\text{and}\qquad\ \ E_{\bT}(v_{\bT'})=0\ \ \ \text{if $\bT'\in\oO\backslash\{\bT\}$\ .}\]
This follows from Formulas (\ref{rep-X}) defining the action of the generators $X_j$ on $V_{\oO}$, together with the fact that for any two different $\bT,\bT'\in\STab(R,\sv)$, there is some $j\in\Ve_R\cup\{0\}$ such that $\qc_j(\bT)\neq\qc_j(\bT')$.

As operators on $V_{\oO}$, the set $\{E_{\bT}\}_{\bT\in\oO}$ is a complete set of pairwise orthogonal idempotents of $\text{End}(V_{\oO})$.

\begin{proposition}\label{prop-rep2}
The representations $V_{\oO}$, where $\oO$ runs over the set of admissible truncated orbits of type $(R,\sv)$, are pairwise non-isomorphic irreducible representations of $\C(q)H(R,\sv)$.
\end{proposition}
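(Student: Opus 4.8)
The plan is to establish first irreducibility of each $V_{\oO}$ and then the pairwise non-isomorphism, using the idempotents $E_{\bT}$ and the explicit action formulas. For irreducibility, I would argue as follows. Since the $X_j$ act diagonally in the basis $\{v_{\bT}\}_{\bT\in\oO}$ with, for each $\bT$, a characterising sequence of eigenvalues $\Seq(\bT)$ (standard tableaux of type $(R,\sv)$ are determined by their contents, by Corollary \ref{cor-tab-A} applied to each substring), the $E_{\bT}$ are the spectral projectors onto the lines $\C(q)\,v_{\bT}$. Hence any nonzero $\C(q)H(R,\sv)$-submodule $V'\subseteq V_{\oO}$, being stable under all $X_j$ and thus under all $E_{\bT}$, must contain some $v_{\bT}$. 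It then suffices to show that starting from a single $v_{\bT}$ one can reach every $v_{\bT'}$, $\bT'\in\oO$, by the action of the algebra. By Remark \ref{rem-orb}(ii) and the definition of the truncated orbit, it is enough to treat the case $\bT'=\ovr_i(\bT)\neq\bze$ for a single simple reflection. In that case $\qc_i(\bT)\neq q^{\pm2}\qc_{i-1}(\bT)$, so the coefficient $\dfrac{q\,\qc_i(\bT)-q^{-1}\qc_{i-1}(\bT)}{\qc_i(\bT)-\qc_{i-1}(\bT)}$ multiplying $v_{\ovr_i(\bT)}$ in formula (\ref{rep-g}) for $g_i(v_{\bT})$ is nonzero (it vanishes only when $\qc_i(\bT)=q^{-2}\qc_{i-1}(\bT)$), and therefore $v_{\ovr_i(\bT)}\in\C(q)H(R,\sv)\cdot v_{\bT}$ after subtracting the scalar multiple of $v_{\bT}$. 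Induction on the length of an element $w\in W_0(R)$ with $\overline{w}(\bT)=\bT'$ (using $\bze\notin\oO$ so that no intermediate term is killed) then gives $v_{\bT'}\in\C(q)H(R,\sv)\cdot v_{\bT}$ for all $\bT'\in\oO$, so $V'=V_{\oO}$ and $V_{\oO}$ is irreducible.

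For pairwise non-isomorphism, suppose $\oO\neq\oO'$ are two admissible truncated orbits and $\varphi\colon V_{\oO}\xrightarrow{\sim}V_{\oO'}$ is an isomorphism of $\C(q)H(R,\sv)$-modules. Since $\varphi$ intertwines the action of the commutative subalgebra generated by the $X_j$, it carries common eigenvectors to common eigenvectors with the same eigenvalue sequence; hence the multiset of sequences $\{\Seq(\bT):\bT\in\oO\}$ coincides with $\{\Seq(\bT'):\bT'\in\oO'\}$. As a standard tableau of type $(R,\sv)$ is uniquely determined by its sequence of contents, this forces $\oO=\oO'$ as subsets of $\Seq_{R,\sv}$, a contradiction. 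Concretely: pick any $\bT\in\oO$; then $\varphi(v_{\bT})$ is a common eigenvector of the $X_j$ in $V_{\oO'}$ with eigenvalues $\qc_j(\bT)$, so $\varphi(v_{\bT})$ is a scalar multiple of $v_{\bT}$ with $\bT\in\oO'$, whence $\oO\subseteq\oO'$ and by symmetry $\oO=\oO'$.

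I do not expect any serious obstacle here; the only point requiring a little care is the induction showing that the whole orbit is reached from one vector, where one must verify that the off-diagonal coefficient in (\ref{rep-g}) is nonzero precisely when $\ovr_i(\bT)\neq\bze$ — that is, one should check that $q\,\qc_i(\bT)-q^{-1}\qc_{i-1}(\bT)=0$ is equivalent to $\qc_i(\bT)=q^{-2}\qc_{i-1}(\bT)$, which is immediate — and that each intermediate tableau in a reduced word still lies in $\oO$ (so its $v$ is genuinely nonzero), which is exactly the content of the remark following Definition \ref{def-oO} that $\overline{w}(\bT)\neq\bze$ implies $\overline{w^{-1}}\overline{w}(\bT)=\bT$. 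Everything else is a direct application of the diagonal action of the $X_j$ together with the orthogonal-idempotent property of the $E_{\bT}$ already recorded before the statement.
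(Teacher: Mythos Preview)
Your proposal is correct and follows essentially the same approach as the paper's own proof: use the idempotents $E_{\bT}$ to show that any nonzero submodule contains some $v_{\bT}$, then use the nonvanishing of the off-diagonal coefficient in (\ref{rep-g}) whenever $\ovr_i(\bT)\neq\bze$ to propagate through the truncated orbit; for non-isomorphism, use that distinct truncated orbits are disjoint and that the $X_j$-eigenvalue sequences determine the basis vectors. The only cosmetic difference is that the paper treats non-isomorphism first and phrases it as the absence in $V_{\oO'}$ of any common eigenvector with sequence $\Seq(\bT)$ for $\bT\in\oO$, rather than via an explicit intertwiner $\varphi$, but the content is identical.
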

\begin{proof} Let $\oO$ and $\oO'$ be two distinct admissible truncated orbits of type $(R,\sv)$. Let $\bT\in\oO$. From Formulas (\ref{rep-X}), we have first that the vector $v_{\bT}$ in $V_{\oO}$ is a common eigenvector for the operators $X_j$ with eigenvalues given by $\Seq(\bT)$, and second that, in $V_{\oO'}$, there is no common eigenvector of the operators $X_j$ with the same eigenvalues. Indeed, as $\oO$ and $\oO'$ are disjoint, we have that $\bT'\neq\bT$ for every $\bT'\in\oO'$, and in turn, $\Seq(\bT')\neq\Seq(\bT)$ for every $\bT'\in\oO'$. We conclude that $V_{\oO}$ and $V_{\oO'}$ are not isomorphic.

Let $\oO$ an admissible truncated orbits of type $(R,\sv)$ and $W\neq\{0\}$ an invariant subspace of $V_{\oO}$ for the action of $\C(q)H(R,\sv)$.

Let $v$ be a non-zero vector of $W$. It is a linear combination of the basis vectors $v_{\bT}$, $\bT\in\oO$, of $V_{\oO}$ with at least one non-zero coefficient. By applying the idempotents $E_{\bT}$ on $v$ and using that $W$ is stable by $\C(q)H(R,\sv)$, we obtain that $v_{\bT}\in W$ for some $\bT\in\oO$.

Now let $i\in\Ve_R$ such that $\ovr_i(\bT)\neq\bze$. Recall that it means $\qc_i(\bT)\neq q^{\pm2}\qc_{i-1}(\bT)$. From Formula (\ref{rep-g}), we obtain that the coefficient in front of $v_{\ovr_i(\bT)}$ in $g_i(v_{\bT})$ is different from $0$, and in turn that $v_{\ovr_i(\bT)}\in W$. As all elements of the truncated orbit $\oO$ can be obtained from $\bT$ by repeated applications of truncated operators $\ovr_i$, $i\in\Ve_R$, we conclude that $v_{\bT'}\in W$ for any $\bT'\in\oO$. This shows that $W=V_{\oO}$ and concludes the proof of the proposition.
\end{proof}

\section{Restriction to finite Hecke algebras and classical limit}\label{sec-res-lim}

\subsection{Restriction to $H_0(R)$}

We recall that the finite Hecke algebra $H_0(R)$ of type $R$ is isomorphic to the subalgebra of $H(R,\sv)$ generated by $g_i$, $i\in\Ve_R$.

Let $V$ be a representation of $\C(q)H(R,\sv)$. We denote by $V^{\text{fin}}$ the restriction of $V$ to the subalgebra $\C(q)H_0(R)$. In particular, for $\oO$ an admissible truncated orbit of type $(R,\sv)$, the representation $V_{\oO}^{\text{fin}}$ is the restriction of the representation $V_{\oO}$ constructed in the preceding section.

The action of $H_0(R)$ on $V_{\oO}^{\text{fin}}$ is given explicitly by Formulas (\ref{rep-g}). In general, the representation $V_{\oO}^{\text{fin}}$ is not irreducible. We are going to formulate a sufficient condition on $\oO$ so that $V_{\oO}^{\text{fin}}$ is an irreducible representation of $\C(q)H_0(R)$ and then interpret it combinatorially.

\begin{remark}\label{rem-pl}
A particular situation is when in all elements of $\oO$, a single place $\gamma_1$ appears (this will be in particular satisfied for truncated orbits of level 1 to be defined below). Then the representation $V_{\oO}^{\text{fin}}$ is given purely in terms of moving boxes in skew diagrams, and the value of the place is not relevant. Indeed, in Formulas (\ref{rep-g}) giving the action of the generators $g_i$, we can see that $\gamma_1$ disappears by simplification
\hfill$\triangle$
\end{remark}

\paragraph{\textbf{Truncated orbits of level 1.}} We consider the following definition for arbitrary truncated orbits in $\Seq_{R,\sv}$. However we will use it only for admissible truncated orbits (the ones corresponding to representations).

\begin{definition}\label{def-lev}
Let $\oO$ be a truncated orbit in $\Seq_{R,\sv}$. We say that $\oO$ is of level 1 if the first coefficient of the sequences in $\oO$ remain constant, namely if
\[\qc_0(\bS)=\qc_0(\bS')\ ,\ \ \ \ \quad\text{for any $\bS,\bS'\in\oO$\,.}\]
\end{definition}

\begin{proposition}\label{prop-fin}
Let $\oO$ be an admissible truncated orbit of type $(R,\sv)$. If $\oO$ is of level 1 then $V_{\oO}^{\text{fin}}$ is an irreducible representation of $\C(q)H_0(R)$.
\end{proposition}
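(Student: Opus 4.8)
The plan is to mimic the irreducibility argument already used for the full algebra in Proposition~\ref{prop-rep2}, but now working only with the generators $g_i$, $i\in\Ve_R$, and showing that the hypothesis ``$\oO$ of level $1$'' is exactly what compensates for the loss of the commutative generators $X_j$. First I would establish that, for a level $1$ orbit, the operators $g_i$ alone suffice to detect the basis vectors $v_{\bT}$: the idea is that the ratios $\qc_i(\bT)/\qc_{i-1}(\bT)$, i.e.\ the $X^{\alpha_i}$-eigenvalues, can be recovered from the action of the $g_i$ because $g_i$ has, on the two-dimensional subspace $\langle v_{\bT},v_{\ovr_i(\bT)}\rangle$ (or the one-dimensional $\langle v_{\bT}\rangle$ if $\ovr_i(\bT)=\bze$), eigenvalues $q$ and $-q^{-1}$ with known eigenvectors depending on $\qc_i(\bT)-\qc_{i-1}(\bT)$. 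More structurally, inside each subalgebra $\cH_u\cong H(GL_{l_u+1})$, the element $Y_1Y_2^{-1}$ type combinations are expressible via Jucys--Murphy elements which lie in the finite Hecke algebra of type $A$ only up to the leading term $Y_1=X_0X^{\alpha_1}$; so what one really controls is all the $X^{\alpha_i}$ but not $X_0=\qc_0$. Level $1$ says precisely that $\qc_0$ is the same for every $\bS\in\oO$, hence knowing all the $X^{\alpha_i}$-eigenvalues is equivalent to knowing the full sequence $\Seq(\bT)$.

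Concretely, the key steps in order would be: (1) Observe that $\C(q)H_0(R)$ contains, for each $u\in\{1,2,3\}$, the finite Hecke algebra $H_0(A_{l_u})$ generated by the $g_j$ with $j\in\cP_u$; inside this, the (multiplicative) Jucys--Murphy-type elements act on $V_{\oO}^{\text{fin}}$ by eigenvalues which are Laurent monomials in the $\qc_j(\bT)/\qc_0(\bT)$. Since $\oO$ is of level $1$, $\qc_0(\bT)=\gamma_0$ is a fixed scalar, so these monomials determine $\qc_j(\bT)$ for all $j\in\Ve_R\cup\{0\}$, i.e.\ they separate the vectors $v_{\bT}$, $\bT\in\oO$. (2) Deduce, by the same idempotent argument as in Proposition~\ref{prop-rep2}, that any nonzero invariant subspace $W\subseteq V_{\oO}^{\text{fin}}$ contains some $v_{\bT}$: one forms the spectral projections (polynomials in the type-$A$ Jucys--Murphy elements) onto the joint eigenlines, applies them to a nonzero $v\in W$, and lands on a single $v_{\bT}$. (3) Finally, run the connectivity step verbatim from the proof of Proposition~\ref{prop-rep2}: for $i\in\Ve_R$ with $\ovr_i(\bT)\ne\bze$, the coefficient of $v_{\ovr_i(\bT)}$ in $g_i(v_{\bT})$ is $\dfrac{q\,\qc_i(\bT)-q^{-1}\qc_{i-1}(\bT)}{\qc_i(\bT)-\qc_{i-1}(\bT)}$, which is nonzero because $\qc_i(\bT)\ne q^{\pm2}\qc_{i-1}(\bT)$; hence $v_{\ovr_i(\bT)}\in W$, and since every element of $\oO$ is reachable from $\bT$ by repeated truncated operators (Remarks~\ref{rem-orb}), $W=V_{\oO}^{\text{fin}}$.

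The main obstacle is step~(1): making rigorous the claim that the type-$A$ Jucys--Murphy elements, viewed as operators on $V_{\oO}^{\text{fin}}$, have enough eigenvalues to separate all the $v_{\bT}$. One must be careful that across the three subalgebras $\cH_1,\cH_2,\cH_3$ the available monomials in $\qc_j(\bT)$, $j\in\Ve_R$, together span enough to recover every individual $\qc_j(\bT)$ (not merely products); this is where the shape of the basis $\cB_{R,\sv}$ — with $\delta_j-\delta_{j-1}=\alpha_j$ — is used, giving the $X^{\alpha_j}=X_jX_{j-1}^{-1}$ one at a time along each branch, and the level $1$ condition is what anchors the recursion at $X_0$. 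An alternative, perhaps cleaner, route for step~(1) is to bypass Jucys--Murphy elements entirely: note that on a level $1$ orbit the scalars $\qc_0(\bT)=\gamma_0$ are all equal, so the original central-ish element $X_0$ acts as the scalar $\gamma_0$ on all of $V_{\oO}^{\text{fin}}$; then for each $i$ the operator $g_iX_{i-1}g_i=X_i$ from \eqref{rel-H-gX1} shows (by induction along the branches, starting from $X_0=\gamma_0$) that every $X_j$ is in fact realized on $V_{\oO}^{\text{fin}}$ by a polynomial in $g_1,\dots,g_n$ with coefficients determined by the already-constructed $X_{j'}$ — i.e.\ the subalgebra generated by the $g_i$ acts with the same invariant subspaces as the full $H(R,\sv)$, and irreducibility then follows immediately from Proposition~\ref{prop-rep2}. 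I would present this second route as the main line of argument, as it reduces the proposition almost directly to the already-proved Proposition~\ref{prop-rep2}.
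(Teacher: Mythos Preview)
Your second route---noting that $X_0$ acts as the constant scalar $c=\gamma_0$ on $V_{\oO}$ and then using $X_i=g_iX_{i-1}g_i$ inductively along the branches to conclude that every $X_j$ acts on $V_{\oO}$ by an element already in the image of $\C(q)H_0(R)$, whence irreducibility follows from Proposition~\ref{prop-rep2}---is exactly the paper's proof; the paper merely packages the same computation via the quotient $H(R,\sv)^c:=\C(q)H(R,\sv)/(X_0-c)$ and observes $\pi^c(H_0(R))=H(R,\sv)^c$. Since you declare this as your main line, your proposal is correct and coincides with the paper's argument.
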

\begin{proof}
Let $\oO$ be an admissible truncated orbit of level 1 and set $c:=\qc_0(\bT)$ for $\bT\in\oO$. The representation $V_{\oO}$ is an irreducible representation of $\C(q)H(R,\sv)$ and on $V_{\oO}$, the generator $X_0$ acts as $c\cdot\text{Id}_{V_{\oO}}$ by assumption on $\oO$.

Let $H(R,\sv)^c$ denote the quotient of $\C(q)H(R,\sv)$ by the relation $X_0-c=0$. Then the representation $V_{\oO}$ factors through this quotient and becomes an irreducible representation of $H(R,\sv)^c$. Let $\pi^c$ be the canonical surjective map from $\C(q)H(R,\sv)$ to its quotient $H(R,\sv)^c$ and consider the following diagram:
\[\begin{array}{ccc}\C(q)H(R,\sv) & \stackrel{\pi^c}{\longrightarrow} & H(R,\sv)^c \\
\cup & &\cup\\
\C(q)H_0(R) & \stackrel{\pi^c}{\longrightarrow} & H_0(R)^c\end{array} \]
where we denoted $H_0(R)^c:=\pi^c\bigl(\C(q)H_0(R)\bigr)$. Considering the restriction to $\C(q)H_0(R)$, the representation $V_{\oO}^{\text{fin}}$ factors through $H_0(R)^c$, which is a subalgebra of $H(R,\sv)^c$.

We claim that we have in fact $H_0(R)^c=H(R,\sv)^c$. Indeed this follows immediately from $\pi^c(X_0)=c$ and the relations $X_i=g_iX_{i-1}g_i$ for $i\in\Ve_R$ showing that $\pi^c(X_j)\in H_0(R)^c$ for any $j\in\Ve_R\cup\{0\}$. Therefore $V_{\oO}^{\text{fin}}$ is irreducible for $H_0(R)^c$ (since $V_{\oO}$ is irreducible for $H(R,\sv)^c$) and in turn, is irreducible for $\C(q)H_0(R)$.
\end{proof}

\begin{remarks}\label{rem-res}
\textbf{(i)} As we just saw in the proof, the quotient $H(R,\sv)^c$ of the affine Hecke algebra $H(R,\sv)$ by a relation of the form $X_0=c$ is in fact a quotient of the finite Hecke algebra $H_0(R)$. This is implied by the first set of defining relations (\ref{rel-H-gX1}) between the generators $g_i$ and $X_j$. The other defining relations (\ref{rel-H-gX2})--(\ref{rel-H-gX5}) may imply other relations between the generators $g_i$ in $H(R,\sv)^c$. So, in general, $H(R,\sv)^c$ is a non-trivial quotient of $H_0(R)$. This is related to the fact that, in general, the irreducible representations obtained in Proposition \ref{prop-fin} do not exhaust the set of irreducible representations of $\C(q)H_0(R)$. Note however, that one could ask how much of the set of irreducible representations of $\C(q)H_0(R)$ is obtained by fixing $R$ and varying $\sv$.

\textbf{(ii)} The terminology ``level 1" in Definition \ref{def-lev} refers to the fact that $X_0$ has only one eigenvalue in $V_{\oO}$. Admissible truncated orbits of higher levels would lead to the study of quotients of $H(R,\sv)$ by characteristic equations for $X_0$ of higher degrees (in some contexts, quotients of this sort are called cyclotomic quotients).

\textbf{(iii)} A particular situation is when $H(R,\sv)$ is the affine Hecke algebra $H(GL_{N+1})$. In this case, it is known that we have $H(R,\sv)^c=\C(q)H_0(R)$ (equivalently, all irreducible representations of $\C(q)H_0(R)$ are obtained via Proposition \ref{prop-fin}). Moreover, the quotients of higher levels  in this case are well-known: they are the so-called cyclotomic Hecke algebras, or Ariki--Koike algebras \cite{AK}.

\textbf{(iv)} Finally we note that the converse of Proposition is not true. It is already seen in the $GL_{3}$ situation, where the representation associated to the skew diagram $\begin{array}{cc}
  & \hspace{-0.45cm}\Box \\[-0.7em]
\Box & \hspace{-0.45cm}\Box
\end{array}$, restricted to $H_0(A_2)$, is an irreducible representation, while $X_0$ has two different eigenvalues.
\hfill$\triangle$\end{remarks}

\paragraph{\textbf{Combinatorial characterisation of admissible truncated orbits of level 1.}} Recall that a placed skew shape is an equivalence class of placed skew diagrams (Section \ref{sec-tab-cla}). 

We will from now on make the following slight abuse of terminology. A skew partition $\lambda/\mu$ is a ``usual partition" when $\mu$ is empty. We will say that a placed skew shape is a usual partition if it is the equivalence class of a placed skew diagram of the form $\{(\Gamma_1,\gamma_1)\}$, where $\Gamma_1$ is the diagram of a usual partition (in particular, there is a single place; see Remark \ref{rem-pl}).

Let $\Gamma$ be a skew diagram and $\theta$ a node of $\Gamma$. We call $\theta$ a \emph{top left node} of $\Gamma$ if the positions above $\theta$ in the same column and the positions to the left of $\theta$ in the same line are all empty. A placed node $(\theta,\gamma)$ of a placed skew diagram is called a \emph{top left placed node} if $\theta$ is a top left node of its diagram. Finally, we note that the notion of being a top left placed node is compatible with the equivalence relation of placed skew diagram, and therefore is well-defined for a placed skew shape.

Let $\bGamma$ be a placed skew diagram with a unique top left placed node. It means that there is only one place: $\bGamma=\{(\Gamma_1,\gamma_1)\}$, and moreover the skew diagram $\Gamma_1$ has only one top left node. Using the equivalence relation, we can translate $\Gamma_1$ such that the top left node moves to position $(1,1)$ (the place $\gamma_1$ may have to change; see example (\ref{ex-eq})). We thus see that $\bGamma$ is equivalent to a placed skew diagram of the form $\{(\Gamma'_1,\gamma'_1)\}$, where $\Gamma'_1$ is the diagram of a usual partition.

As a conclusion, we have explained that a placed skew shape is a usual partition if and only if it contains a unique top left placed node.

\begin{proposition}\label{prop-lev2}
Let $\oO$ be an admissible truncated orbit of type $(R,\sv)$. Then $\oO$ is of level 1 if and only if for every $\bT=(\bT_1,\bT_2,\bT_3)\in\oO$, the shapes of $\bT_1,\bT_2$ and $\bT_3$ are usual partitions.
\end{proposition}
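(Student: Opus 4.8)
The plan is to reduce the whole statement to the behaviour of the single coefficient $\qc_0$ under the single truncated operator $\ovr_1$. I would start by noting that, among the operators $r_i$ on $\Seq_{R,\sv}$, only $r_1$ can change $\qc_0$: by (\ref{act-W1}) and (\ref{act-W5}) one has $r_1(\delta_0)=\delta_1$ and $r_i(\delta_0)=\delta_0$ for $i\neq1$, so that $\qc_0\bigl(r_1(\bS)\bigr)=\qc_1(\bS)$ while $\qc_0\bigl(r_i(\bS)\bigr)=\qc_0(\bS)$ for $i\neq1$. Hence along the truncated orbit $\oO$ the value of $\qc_0$ stays constant unless a nonzero application of $\ovr_1$ occurs, which then replaces the current $\qc_0$ by the current $\qc_1$. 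Since $\oO$ is admissible, every $\bS\in\oO$ is the sequence of contents of a standard tableau of type $(R,\sv)$, so its first substring $\bS_1$ satisfies the condition $(\star)$ of Proposition \ref{prop-tab-A}, which forces $\qc_0(\bS)\neq\qc_1(\bS)$. It follows that $\oO$ is of level $1$ if and only if $\ovr_1(\bS)=\bze$ for every $\bS\in\oO$; equivalently, for every $\bT=(\bT_1,\bT_2,\bT_3)\in\oO$, the placed node containing $1$ is a neighbour (to the right or just below) of the placed node containing $0$ in $\bT_1$ — and this pair of placed nodes is the same in $\bT_1$, $\bT_2$ and $\bT_3$ by Definition \ref{def-tab-R}, since $k\geq1$.

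Next I would set up the combinatorial dictionary. In any standard tableau the placed node containing $0$ is a top left placed node, and the placed node containing $1$ lies at a top left placed node of the shape with the node of $0$ removed; such a node is either a neighbour of the node of $0$ (content ratio $q^{\pm2}$) or one of the remaining top left placed nodes of the shape. Two distinct top left placed nodes of a placed skew shape never have content ratio in $\{q^{-2},1,q^{2}\}$: nodes lying in different connected components have either distinct places (incompatible powers of $q^2$) or, inside one skew diagram, lie neither on the same nor on adjacent diagonals by the very definition of connected components; and two distinct top left placed nodes inside the same connected component lie neither on the same nor on adjacent diagonals by the classical separation of the northwest corners of a skew diagram (a short check using that the rows and columns of a skew diagram are intervals). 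Combining this with the fact recalled just before the statement — a placed skew shape is a usual partition if and only if it has a unique top left placed node — I obtain: a placed skew shape is a usual partition if and only if in every standard tableau of that shape the node containing $1$ is a neighbour of the node containing $0$.

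With these two reductions the two implications follow at once. If for every $\bT\in\oO$ the shapes of $\bT_1,\bT_2,\bT_3$ are usual partitions, then by the dictionary the node of $1$ is a neighbour of the node of $0$ in $\bT_1$, so $\ovr_1(\Seq(\bT))=\bze$ for all $\bT\in\oO$, and $\oO$ is of level $1$. Conversely, suppose $\oO$ is of level $1$, so $\ovr_1(\bS)=\bze$ for every $\bS\in\oO$, and assume for contradiction that some $\bT\in\oO$ has a component $\bT_u$ ($u\in\{1,2,3\}$) whose shape is not a usual partition. The operators $\ovr_a$ with $a\in\cP_u$ act on the $u$-th substring of a sequence exactly as the truncated operators of $H(GL_{l_u+1})\cong\cH_u$ act on $\bT_u$ (their vanishing condition only involves the contents of $\bT_u$), so by Proposition \ref{prop-tab-A2} there is a word in these operators, with all intermediate steps nonzero, sending $\bT$ to an element $\bS'$ whose $u$-th component is a standard tableau of the same shape as $\bT_u$ with $0$ and $1$ sitting at two distinct top left placed nodes; by Remark \ref{rem-orb} and admissibility we have $\bS'\in\oO$. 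Then $\qc_1(\bS')/\qc_0(\bS')\notin\{q^{-2},q^{2}\}$ by the dictionary, so $\ovr_1(\bS')\neq\bze$, contradicting that $\oO$ is of level $1$. This proves the proposition.

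The part I expect to require the most care is twofold: the ``classical'' separation statement for distinct top left placed nodes (the one-line claim that northwest corners of a skew diagram avoid equal and adjacent diagonals, which needs the contiguity of rows and columns), and the bookkeeping showing that the $\cP_u$-truncated operators genuinely realise inside $\oO$ the internal $GL_{l_u+1}$-dynamics of Proposition \ref{prop-tab-A2}, despite the side effects by which $r_2$, $r_{\un2}$, $r_{k+1}$ and $r_{\unn{k+1}}$ rescale the other substrings. Those side effects are harmless here because they touch neither $\qc_0$ and $\qc_1$ nor the vanishing conditions of the operators actually being applied.
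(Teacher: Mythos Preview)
Your proof is correct and follows essentially the same strategy as the paper's: both directions hinge on the fact that only $r_1$ moves $\qc_0$, and the converse is obtained by using Proposition~\ref{prop-tab-A2} to navigate inside $\oO$ via the embedded $S_{l_u+1}$-truncated action on the $u$-th substring. The one organisational difference is that you first reformulate ``level~1'' as ``$\ovr_1(\bS)=\bze$ for all $\bS\in\oO$'' and then appeal to your separation lemma for top left placed nodes (contents differ by at least two), whereas the paper argues the converse more directly by moving $0$ to a different top left placed node $\btheta'$ and simply observing $\qc(\btheta)\neq\qc(\btheta')$, which only needs that distinct top left nodes lie on distinct diagonals rather than non-adjacent ones; your route is slightly more work combinatorially but yields the same conclusion.
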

\begin{proof}
Assume first that for every $\bT=(\bT_1,\bT_2,\bT_3)\in\oO$, the shapes of $\bT_1,\bT_2,\bT_3$ are usual partitions. Thus, for every $\bT=(\bT_1,\bT_2,\bT_3)\in\oO$, there is a unique top left placed node in the shapes of $\bT_1,\bT_2,\bT_3$. By standardness of $\bT_1,\bT_2,\bT_3$, the only possibility for $0$ is to be contained in this placed node. This shows that $\ovr_1(\bT)=\bze$ for every $\bT\in\oO$ (otherwise, in $\ovr_1(\bT)$, $0$ would be contained in a placed node which is not top left). As $r_1$ is the only simple reflection in $W_0(R)$ which modifies the position of $0$, we conclude that $\qc_0(\bT)$ remains constant along $\oO$.

Assume that we have $\bT=(\bT_1,\bT_2,\bT_3)\in\oO$ such that, for some $u\in\{1,2,3\}$, the shape $\bGamma_u$ of $\bT_u$ is not a usual partition. It means there are at least two top left placed nodes in $\bGamma_u$. So let $\btheta$ be the one containing $0$ in $\bT_u$ and let $\btheta'$ be a different one. It is clear that $\btheta$ and $\btheta'$ can not lie in the same diagonal of the same diagram, and therefore $\qc(\btheta)\neq\qc(\btheta')$.

Let $\bT'_u$ be a standard tableau of shape $\bGamma_u$ such that $0$ is contained in $\btheta'$ (such a standard tableau exists as $\btheta'$ is a top left placed node). Let $l_u+1$ be the size of $\bGamma_u$ ($l_u$ is $l$, $l'$ or $l''$ depending on $u$). We have seen in Proposition \ref{prop-tab-A2} that $\bT'_u$ can be obtained from $\bT_u$ by repeated applications of truncated operators of 
the symmetric group $S_{l_u+1}$. So we have $\bT_u'=\overline{w}(\bT_u)$ for some $w\in S_{l_u+1}$. 
The symmetric group $S_{l_u+1}$ is a subgroup of $W_0(R)$, and so, considering $w$ as an element of $W_0(R)$, we set $\bT':=\overline{w}(\bT)$.

From the definition of truncated operators, we have that $\bT'\neq\bze$ since we already had $\overline{w}(\bT_u)\neq\bze$. So we obtained $\bT'\in\oO$ such that $\qc_0(\bT')=\qc(\btheta')$. From the fact that $\qc_0(\bT)=\qc(\btheta)\neq\qc(\btheta')$, we conclude that $\oO$ is not of level $1$.
\end{proof}

\begin{example}\label{ex-res}
We take $R=D_4$ and $\sv=1$ in the standard labelling, as in Section \ref{subsec-ex}\textbf{(b)}. We show an example of an admissible truncated orbits of level 1:
\[\left(
\begin{array}{cccc}
\fbox{\scriptsize{$0$}} & \hspace{-0.35cm}\fbox{\scriptsize{$1$}} & \hspace{-0.35cm}\fbox{\scriptsize{$2$}}  & \hspace{-0.35cm}\fbox{\scriptsize{$3$}} \\[-0.2em]
 & & & 
\end{array},
\begin{array}{ccc}
\fbox{\scriptsize{$0$}} & \hspace{-0.35cm}\fbox{\scriptsize{$1$}} & \hspace{-0.35cm}\fbox{\scriptsize{$2$}} \\[-0.2em]
\fbox{\scriptsize{$\un{3}$}} & &
\end{array}
\right)
\ \ \stackrel{\ \ \ovr_{3_{}}\ \,}{\longleftrightarrow}\ \ \left(
\begin{array}{cc}
\fbox{\scriptsize{$0$}} & \hspace{-0.35cm}\fbox{\scriptsize{$1$}} \\[-0.2em]
\fbox{\scriptsize{$2$}} & \hspace{-0.35cm}\fbox{\scriptsize{$3$}} 
\end{array},
\begin{array}{ccc}
\fbox{\scriptsize{$0$}} & \hspace{-0.35cm}\fbox{\scriptsize{$1$}} & \hspace{-0.35cm}\fbox{\scriptsize{$\un{3}$}} \\[-0.3em]
\fbox{\scriptsize{$2$}} & &
\end{array}
\right)
\ \ \stackrel{\ \ \ovr_{2_{}}\ \,}{\longleftrightarrow}\ \ 
\left(
\begin{array}{cc}
\fbox{\scriptsize{$0$}} & \hspace{-0.35cm}\fbox{\scriptsize{$2$}} \\[-0.2em]
\fbox{\scriptsize{$1$}} & \hspace{-0.35cm}\fbox{\scriptsize{$3$}} 
\end{array},
\begin{array}{ccc}
\fbox{\scriptsize{$0$}} & \hspace{-0.35cm}\fbox{\scriptsize{$2$}} & \hspace{-0.35cm}\fbox{\scriptsize{$\un{3}$}} \\[-0.3em]
\fbox{\scriptsize{$1$}} & &
\end{array}
\right)\]
The non-indicated actions of $\ovr_{1},\ovr_2,\ovr_{3},\ovr_{\un{3}}$ give $\bze$. This truncated orbit gives rise to an irreducible representation of $H_0(D_4)$.
\hfill$\triangle$\end{example}

\subsection{Representations of $W_0(R)$ as classical limits}

Let $\bT$ be a tableau of type $(R,\sv)$ and, for $i\in\Ve_R$, let $\btheta_i=(\theta_i,\gamma_i)$ be the placed node of $\bT$ with number $i$. For $i,j\in\Ve_R$, we write $\pl_i(\bT)=\pl_j(\bT)$ if $\gamma_i=\gamma_j$ and $\pl_i(\bT)\neq\pl_j(\bT)$ otherwise. We recall that tableaux are only considered up to equivalence and we note that the values of the $\gamma_i$'s are therefore not uniquely defined. Nevertheless, the property $\pl_i(\bT)=\pl_j(\bT)$ (or its negation) is well-defined since compatible with the equivalence relation.

Let $\oO$ be an admissible truncated orbit of type $(R,\sv)$. In this subsection, we will make the following assumption. If $\gamma_1$ and $\gamma_2$ are two different places which appear in an element $\bT\in\oO$ then we assume that:
\begin{equation}\label{hyp-pl}
\left(\frac{\gamma_1}{\gamma_2}\right)\Bigr\rvert_{q=\pm1}\neq 1\ ,
\end{equation}
where, for $x\in\C(q)$, $x\bigr\rvert_{q=\pm1}$ denotes the evaluation of $x$ at $q=\pm1$.

Let $V$ be a representation of the finite Hecke algebra $\C(q)H_0(R)$ and let $\cB$ be a basis of $V$. 
Let $M_i$ be the matrix representing the generator $g_i$, $i\in\Ve_R$, in the basis $\cB$. Let $\epsilon\in\{-1,1\}$ and assume that all coefficients of $M_i$ are non-singular when evaluated at $q=\epsilon$, and this for all $i\in\Ve_R$. Then, sending the simple reflection $r_i$, $i\in\Ve_R$, to the evaluation of the matrix $M_i$ at $q=\epsilon$, we obtain a complex representation of the Weyl group $W_0(R)$ on a complex vector space with basis $\cB$. In this situation, we say that the evaluation at $q=\epsilon$ of the representation $V$ in the basis $\cB$ exists.

\begin{proposition}\label{prop-clas}
Let $\oO$ be an admissible truncated orbit of type $(R,\sv)$ satisfying (\ref{hyp-pl}) and let $\epsilon\in\{-1,1\}$. The evaluation at $q=\epsilon$ of the representation $V_{\oO}^{\text{fin}}$ in the basis $\{v_{\bT}\}_{\bT\in\oO}$ exists, and the action of $W_0(R)$ is given by 
\begin{itemize}
\item if $\pl_i(\bT)\neq\pl_{i-1}(\bT)$,
\begin{equation}\label{rep-r1}
r_{i}(v_{\bT})= \epsilon\,v_{\ovr_i(\bT)}\ ,
\end{equation}
\item if $\pl_i(\bT)=\pl_{i-1}(\bT)$,
\begin{equation}\label{rep-r2}
r_{i}(v_{\bT})=\epsilon\,\Bigl(\displaystyle\frac{1}{\cc_{i}(\bT)-\cc_{i-1}(\bT)}\,v_{\bT}+\bigl(1+\displaystyle\frac{1}{\cc_{i}(\bT)-\cc_{i-1}(\bT)}\bigr)v_{\ovr_{i}(\bT)}\Bigr)\ ,
\end{equation}\end{itemize}
where $\bT\in\oO$ and $i\in\Ve_R$ (we recall that $v_{\bze}:=0$).
\end{proposition}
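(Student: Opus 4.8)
The plan is to take the explicit seminormal formulas (\ref{rep-g}) for the action of $g_i$ on $V_{\oO}$ and substitute $\qc_j(\bT)=\gamma_j q^{2\cc_j(\bT)}$, then analyse the limit $q\to\epsilon$ of each coefficient. First I would fix $\bT\in\oO$ and $i\in\Ve_R$ and write out $g_i(v_{\bT})$ using that $\qc_i(\bT)=\gamma_i q^{2\cc_i(\bT)}$ and $\qc_{i-1}(\bT)=\gamma_{i-1}q^{2\cc_{i-1}(\bT)}$. There are two cases, exactly matching the two clauses of the statement. If $\pl_i(\bT)\neq\pl_{i-1}(\bT)$, i.e. $\gamma_i\neq\gamma_{i-1}$, then by hypothesis (\ref{hyp-pl}) the ratio $\gamma_i/\gamma_{i-1}$ does not evaluate to $1$ at $q=\epsilon$, so the denominator $\qc_i(\bT)-\qc_{i-1}(\bT)$ stays non-zero in the limit; I would check that the diagonal coefficient $(q-q^{-1})\qc_i(\bT)/(\qc_i(\bT)-\qc_{i-1}(\bT))$ has numerator vanishing at $q=\epsilon$ (because $q-q^{-1}\to 0$) hence tends to $0$, while the off-diagonal coefficient $(q\,\qc_i(\bT)-q^{-1}\qc_{i-1}(\bT))/(\qc_i(\bT)-\qc_{i-1}(\bT))$ tends to $(\epsilon\gamma_i q_0^{2\cc_i}-\epsilon^{-1}\gamma_{i-1}q_0^{2\cc_{i-1}})/(\gamma_i q_0^{2\cc_i}-\gamma_{i-1}q_0^{2\cc_{i-1}})$ with $q_0=\epsilon=\epsilon^{-1}$, which simplifies to $\epsilon$. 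This gives (\ref{rep-r1}).

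In the second case $\gamma_i=\gamma_{i-1}$ (we may take them literally equal up to equivalence), so after cancelling the common place the denominator becomes $\gamma_i(q^{2\cc_i(\bT)}-q^{2\cc_{i-1}(\bT)})$. Here I would set $a=\cc_i(\bT)$, $b=\cc_{i-1}(\bT)$, note $a\neq b$ since $\bT$ is standard and also $a-b\neq\pm1$ is exactly the condition $\ovr_i(\bT)\neq\bze$ — but even when $\ovr_i(\bT)=\bze$ the formula is consistent because then $v_{\ovr_i(\bT)}=0$ and one must separately check the diagonal coefficient $(q-q^{-1})q^{2a}/(q^{2a}-q^{2b})$ limits to a finite value; indeed writing $q^{2a}-q^{2b}=q^{2b}(q^{2(a-b)}-1)$ and $q-q^{-1}=q^{-1}(q^2-1)$ and using $q^{2(a-b)}-1=(q^2-1)(q^{2(a-b-1)}+\cdots+1)$, the ratio limits to $q_0^{2(a-b)}\cdot \tfrac{1}{a-b}\cdot\big(\text{stuff}\big)$; a cleaner route is to recognise $(q-q^{-1})q^{2a}/(q^{2a}-q^{2b})$ as $\epsilon/(a-b)$ in the limit by L'Hôpital or by the standard identity $\lim_{q\to\epsilon}(q-q^{-1})/(q^{2n}-1)=\epsilon^{-1}/n$ for $n\neq 0$. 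Similarly the off-diagonal coefficient $(q\,q^{2a}-q^{-1}q^{2b})/(q^{2a}-q^{2b})$ tends to $\epsilon(a-b+1)/(a-b)=\epsilon(1+\tfrac{1}{a-b})$. This yields (\ref{rep-r2}).

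Having shown each matrix coefficient of $M_i$ is non-singular at $q=\epsilon$ (the only potential singularity is the vanishing of $\qc_i(\bT)-\qc_{i-1}(\bT)$, which is excluded in case one by (\ref{hyp-pl}) and in case two by $\cc_i(\bT)\neq\cc_{i-1}(\bT)$), it follows that the evaluation at $q=\epsilon$ of $V_{\oO}^{\text{fin}}$ in the basis $\{v_{\bT}\}_{\bT\in\oO}$ exists in the sense defined before the proposition. Since the finite braid relations (\ref{rel-H0}) hold identically in $\C(q)$ on $V_{\oO}^{\text{fin}}$, they persist after evaluation, so the resulting operators define a genuine representation of $W_0(R)$ — here I would remark that the quadratic relation $g_i^2=(q-q^{-1})g_i+1$ becomes $r_i^2=1$ at $q=\epsilon$, consistent with the formulas, and the $m_{ij}$-term braid relations are preserved by continuity. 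The main obstacle, and the only place real care is needed, is the limit computation of the two rational coefficients in case two and the verification that no denominator blows up; everything else is formal. I would present the coefficient limits as the identities
\begin{equation*}
\lim_{q\to\epsilon}\frac{(q-q^{-1})\,q^{2a}}{q^{2a}-q^{2b}}=\frac{\epsilon}{a-b}\ ,\qquad
\lim_{q\to\epsilon}\frac{q\,q^{2a}-q^{-1}q^{2b}}{q^{2a}-q^{2b}}=\epsilon\Bigl(1+\frac{1}{a-b}\Bigr)\ ,
\end{equation*}
valid for integers $a\neq b$, and then simply substitute $a=\cc_i(\bT)$, $b=\cc_{i-1}(\bT)$ together with the case-one simplification to conclude.
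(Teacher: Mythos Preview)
Your approach is exactly the one the paper takes: the paper's proof is a one-line remark that the result is a ``straightforward verification, consisting in the evaluation at $q=\epsilon$ of the coefficients appearing in Formulas (\ref{rep-g})'', using $\qc_i(\bT)=\gamma_iq^{2\cc_i(\bT)}$ and hypothesis (\ref{hyp-pl}). You have simply filled in the details of that evaluation, splitting on whether the places agree, and your limit identities for the two coefficients in the same-place case are correct. One small wording slip: in case two the denominator $\qc_i(\bT)-\qc_{i-1}(\bT)=\gamma(q^{2a}-q^{2b})$ \emph{does} vanish at $q=\epsilon$ (since $\epsilon^{2}=1$), so it is not that the singularity is ``excluded by $\cc_i(\bT)\neq\cc_{i-1}(\bT)$'' but rather that it is removable, exactly as your limit computations show; the substance of your argument is unaffected.
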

\begin{proof} The proof is a straightforward verification, consisting in the evaluation at $q=\epsilon$ of the coefficients appearing in Formulas (\ref{rep-g}). We only recall that $\qc_i(\bT)=\gamma_iq^{2\cc_i(\bT)}$ if $i$ is contained in a placed node with place $\gamma_i$ and that we use Condition (\ref{hyp-pl}) on the places.
\end{proof}

Regarding Formulas (\ref{rep-r1})-(\ref{rep-r2}), we note that if $\pl_i(\bT)\neq\pl_{i-1}(\bT)$ then $\ovr_i(\bT)\neq\bze$, and that if $\pl_i(\bT)=\pl_{i-1}(\bT)$ then $\cc_{i}(\bT)\neq\cc_{i-1}(\bT)$ since $\bT$ is standard.

Let $U_{\oO}$ be the representation of the preceding proposition obtained when $q$ is evaluated at $1$, and $U_{\oO}^-$ the one obtained at $-1$. From Formulas (\ref{rep-r1})-(\ref{rep-r2}), we see that:
\[U^-_{\oO}\cong U_{\oO}\otimes\text{sign}\ ,\]
where $\text{sign}$ denotes the signature representation of $W_0(R)$.

\begin{remarks}\label{rem-hyp-pl}
\textbf{(i)} A remark similar to Remark \ref{rem-pl} applies here: when in all elements of $\oO$, a single place $\gamma_1$ appears, the representation $U_{\oO}$ is given purely in terms of moving boxes in skew diagrams, and the value of the place $\gamma_1$ is not relevant. Outside of this situation, even though the values of the places do not appear in Formulas (\ref{rep-r1})-(\ref{rep-r2}), these values determine the structure of $\oO$; see examples in Section \ref{subsec-ex}.

\textbf{(ii)} Condition \ref{hyp-pl} on $\oO$ was assumed for simplicity because it allowed us to perform the evaluation at $q=\pm1$ and to obtain Formulas (\ref{rep-r1})-(\ref{rep-r2}) as ``classical limits" of representations of $H_0(R)$. Therefore it followed immediately that we obtained representations of $W_0(R)$. Alternatively, one could define Formulas (\ref{rep-r1})-(\ref{rep-r2}) for any truncated admissible orbit $\oO$, and then check directly that the defining relations of $W_0(R)$ are indeed satisfied, without any reference to $H_0(R)$.
\hfill$\triangle$\end{remarks}

\end{document}